%

\documentclass[11pt,a4paper]{amsart}
\usepackage[utf8]{inputenc}
\usepackage[english]{babel}
\usepackage{amsmath}
\usepackage{amsfonts}
\usepackage{amssymb}
\usepackage{amsthm}
\usepackage{tikz-cd}
\usepackage{float}

\usepackage[colorlinks=true]{hyperref}
\usepackage{latexsym}    
 \usepackage{amssymb}   
\usepackage{amsmath}    
\usepackage{amsbsy}
\usepackage{amsthm}
\usepackage{amsgen}
\usepackage{amsfonts}
\usepackage{array}
\usepackage{lmodern}
\usepackage[all]{xy}    

\usepackage{caption}
\usepackage[labelformat=simple]{subcaption}

\newsavebox{\largestimage}

\usepackage{tikz}
\usetikzlibrary{knots}
\usetikzlibrary{decorations.pathmorphing}
\usetikzlibrary{calc, decorations.markings}

\usepackage{color}
\usepackage{verbatim}
\usepackage{url}
\usepackage{enumerate}
\numberwithin{figure}{section}

\usepackage{lineno}

\tikzset{snake it/.style={decorate, decoration=snake}}

\DeclareMathOperator{\Diff}{Diff}

\DeclareMathOperator{\codim}{codim}

\DeclareMathOperator{\kernel}{Ker}

\DeclareMathOperator{\dime}{dim}
\DeclareMathOperator{\proj}{proj}
\DeclareMathOperator{\reg}{reg}
\DeclareMathOperator{\prin}{prin}

\newtheorem{theorem}{Theorem}  
\newtheorem{corollary}[theorem]{Corollary}  
		
		
		\newtheorem{thm}{Theorem}[section]
		
		\newtheorem{lem}[thm]{Lemma}
		\newtheorem{cor}[thm]{Corollary}
		
		\newtheorem{prop}[thm]{Proposition}
	\theoremstyle{definition}	
		\newtheorem{remark}[thm]{Remark}


 \newtheoremstyle{TheoremNum}
        {\topsep}{\topsep}              
        {\itshape}                      
        {}                              
        {\bfseries}                     
        {.}                             
        { }                             
        {\thmname{#1}\thmnote{ \bfseries #3}}
    \theoremstyle{TheoremNum}
    \newtheorem{duplicate}{Theorem}

\numberwithin{equation}{section}

\theoremstyle{definition}
\newtheorem*{ack}{Acknowledgements}
\newtheorem*{question}{Question}
		
\usepackage{tikz}

\usepackage{hyperref}
\hypersetup{
    colorlinks=true,
    linkcolor=blue,
    citecolor=blue,
    urlcolor=blue,
    pdfauthor={Diego Corro},
    pdftitle={CLASSIFICATION OF A-FOLIATIONS OF CODIMENSION 2 ON SIMPLY-CONNECTED MANIFOLDS}
}


\newcommand{\R}{\mathbb{R}} 
\newcommand{\Z}{\mathbb{Z}} 
\newcommand{\C}{\mathbb{C}} 

\newcommand{\T}{\mathbb{T}}

\newcommand{\Sp}{\mathbb{S}} 
\newcommand{\D}{\mathbb{D}} 


\newcommand{\fol}{\mathcal{F}} 

\begin{document}
		
\author[D.~Corro]{Diego Corro$^{*}$}
\address[D.~CORRO]{Institut f\"ur Algebra und Geometrie, Karlsruher Institut f\"ur Technologie (KIT), Karlsruhe, Germany.}
\email{\href{mailto:diego.corro.math@gmail.com}
{diego.corro.math@gmail.com}}
\urladdr{\url{http://www.diegocorro.com}}
\thanks{$^*$Supported by CONACyT-DAAD Scholarship number 409912, DFG ($281869850$, RTG $2229$--“Asymptotic Invariants and Limits of Groups and Spaces”), DGAPA-Fellowship associated to the Mathematics Institute of
UNAM, campus Oaxaca, and DFG-Eigenestelle Fellowship CO 2359/1-1.}


\title[A-FOLIATIONS OF CODIMENSION 2]{A-FOLIATIONS OF CODIMENSION TWO ON COMPACT SIMPLY-CONNECTED MANIFOLDS}
\date{\today}

\subjclass[2010]{53C12, 57R30, 53C24}
\keywords{singular Riemannian foliation, aspherical foliations, torus actions}

\setlength{\overfullrule}{5pt}
	\begin{abstract}
		We show that a singular Riemannian foliation of codimension two on a compact simply-connected Riemannian $(n+2)$-manifold, with regular leaves homeomorphic to the $n$-torus, is given by a smooth effective $n$-torus action. This solves in the negative for the codimension $2$ case a question about the existence of foliations by exotic tori on simply-connected manifolds.
	\end{abstract}
	
\maketitle

\section{Main results}

When studying a Riemannian manifold $M$, an approach to understand	 its geometry or its topology  is to simplify the problem by ``reducing" $M$ to a lower dimensional space $B$. This can be achieved by considering a partition of the original manifold $M$ into submanifolds which are, roughly speaking, compatible with the Riemannian structure of $M$. This ``reduction" approach is encompassed in the concept of \emph{singular Riemannian foliations}.

This reduction approach has been applied to the long-standing open problem in Riemannian geometry of classifying and constructing  Riemannian manifolds of positive or nonnegative (sectional) curvature via the \emph{Grove symmetry program}, when the foliation is given by  an \emph{effective isometric action by a  compact Lie group}. When  the leaves of a singular Riemannian foliation are given by the orbits of a smooth Lie group action we say that the foliation is a \emph{homogeneous foliation}. By \cite{Dirk1981} and \cite{Radeschi2014} it is clear that the concept of a singular Riemannian foliation is more general than the one of a Lie group action.

Since any compact connected Lie group contains a maximal torus as a Lie subgroup, the study of torus actions is of importance in the study of homogeneous foliations. The classification up to equivariant diffeomorphism of smooth, closed, simply-connected, manifolds with torus actions is a well studied problem when either the dimension of the manifolds or the cohomogeneity of the action is low  (see for example \cite{Orlik1970}, \cite{Kim1974},  \cite{Fintushel77}, \cite{Oh1983}, \cite{Oh1982}).

One main difference between smooth group actions and foliations is that  foliations may be less rigid, not having several constraints natural to Lie groups (see for example \cite{GeRadeschi2013}). This in turn raises technical challenges, such as the fact that the leaves may carry non-standard smooth structures. Thus an important problem in the setting of singular Riemannian foliations is to distinguish homogeneous foliations from non-homogeneous ones (see for example \cite{GeRadeschi2013}).  This problem does not become more tractable when the topology (and geometry) of the manifold is simple. Even in the case of  spheres, equipped with the round metric, it is not clear how to distinguish homogeneous foliations (i.e. those coming from group actions) from non-homogeneous ones (see for example \cite{Siffert2017}).

By focusing on compact, simply-connected manifolds with a singular Riemannian foliations with closed aspherical leaves, we are able to attack this general problem. This type of singular Riemannian foliations are denoted as \emph{$A$-foliations} and they were introduced by Galaz-Garc\'{i}a and Radeschi in \cite{Galaz-Garcia2015} as generalizations of smooth effective torus actions on smooth manifolds. 

The main result of the present work is that $A$-foliations of codimension two on compact, simply-connected manifolds are homogeneous up to foliated diffeomorphism.

\begin{theorem}\label{T: Codimn 2 A foliation is diffeo to homogeneous fol}
Every $A$-foliation of codimension two on  a compact, simply-connected, Riemannian $n$-manifold, with $n\geqslant 3$,  is homogeneous.
\end{theorem}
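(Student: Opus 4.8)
The plan is to reconstruct $\fol$ from its leaf space together with the local models along the singular leaves, and then to recognize the resulting bundle and combinatorial data as those of a smooth effective torus action. First I would pin down the leaf space $M/\fol$. As $\fol$ has codimension $2$, this quotient is a $2$-dimensional Alexandrov space; using that $M$ is compact and simply-connected, and that the closed aspherical leaves of an $A$-foliation on a simply-connected manifold are forced to be tori, I expect $M/\fol$ to be a simply-connected surface, hence either $\Sp^2$ (when $\fol$ is regular, e.g.\ a Hopf-type foliation) or $\D^2$ (when singular leaves are present). In the latter case the interior parametrizes the principal leaves $\T^{n-2}$, each boundary edge a family of singular leaves $\T^{n-3}$, and each vertex a leaf $\T^{n-4}$, exactly mirroring the isotropy stratification of a cohomogeneity-two $\T^{n-2}$-action on a simply-connected $n$-manifold.

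Next I would build the candidate action. Over the regular part $M_{\mathrm{reg}}$ the induced foliation is Riemannian with compact (torus) leaves, so by Molino theory the leaves have finite holonomy and $M_{\mathrm{reg}}$ fibers over the interior of $M/\fol$ with torus fibers. The subtlety is that a priori this is only an affine torus bundle, with structure group $\T^{n-2}\rtimes GL(n-2,\Z)$; to obtain an \emph{action} I must reduce the structure group to the translations $\T^{n-2}$, i.e.\ kill the monodromy $\pi_1\to GL(n-2,\Z)$. Here simple-connectivity of $M$ is the decisive input, and over a contractible piece of the base the bundle trivializes and yields a free smooth effective $\T^{n-2}$-action on $M_{\mathrm{reg}}$ whose orbits are the principal leaves. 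To extend the action across a singular leaf $L$ I would invoke the slice theorem for singular Riemannian foliations: the infinitesimal foliation at $L$ is again an $A$-foliation of lower codimension on a round sphere, and I would identify it with the foliation by orbits of a linear torus subaction. Matching these local linear models to the global bundle over the interior produces the weight data along each edge (which circle subgroup of $\T^{n-2}$ collapses) together with the compatibility condition at the vertices, precisely in the form used in the Orlik--Raymond and Oh classification of torus actions with two-dimensional orbit space.

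The hard part, I expect, is twofold. The first difficulty is smoothness: $A$-foliations may carry leaves with exotic smooth structures, and tori in high dimension do admit such structures, so I must argue---using the torus-bundle structure over the contractible interior together with the linear slice models at the singular leaves---that the pieces assemble into a \emph{genuinely smooth} $\T^{n-2}$-action rather than merely a topological one. The second difficulty is the global gluing: I must verify that the locally defined torus actions agree on overlaps and patch to a single effective action, which reduces to checking that the weight and isotropy data satisfy the combinatorial compatibility forced at the vertices and that no monodromy obstruction survives; again simple-connectivity of $M$ and of $M/\fol$ is what removes the potential obstruction. Once a global smooth effective $\T^{n-2}$-action is in hand, its orbits coincide with the leaves of $\fol$ by construction, so $\fol$ is homogeneous up to foliated diffeomorphism, as claimed.
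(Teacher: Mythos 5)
You take a genuinely different route from the paper: you try to build a smooth torus action on $M$ itself (kill monodromy over the regular part, then extend across the singular strata via linear slice models), whereas the paper never constructs an action on $M$ directly. It defines weight invariants, proves that a weighted leaf space together with a cross-section $\sigma\colon M^\ast\to M$ determines the foliated homeomorphism type (Theorem~\ref{T: weights classify the foliation}), constructs such cross-sections by obstruction theory (Theorem~\ref{T: cross-section for codim 2 A-foliation}), checks that the weights are legal in Oh's sense (Lemma~\ref{L: determinant of n-circles must be one}), and then invokes Oh's realization theorem (Theorem~\ref{t: Weights realized}) to produce a \emph{different} manifold $N$ carrying the torus action, to which $M$ is foliated homeomorphic (Theorem~\ref{t: Fol homeom to homogen}); only afterwards is this upgraded to a diffeomorphism. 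This ordering is not cosmetic, and it exposes the main gap in your plan. To get a translation $\T^{n-2}$-action on the regular part you need the fibres of $M_{\prin}\to \inte (M^\ast)$ to be \emph{standard} smooth tori, since a free smooth $\T^{k}$-action on a closed $k$-manifold forces it to be diffeomorphic to $\T^k$; an exotic torus admits no such action. A priori the leaves of an $A$-foliation are only \emph{homeomorphic} to tori, and the examples of \cite{FarrellWu2018} show exotic torus leaves really occur for $A$-foliations, so this is not a formality. Moreover, your claim that the structure group is a priori $\T^{n-2}\rtimes \mathrm{GL}(n-2,\Z)$ already presupposes an affine (hence standard) structure on the fibres: the honest a priori structure group is $\Diff$ of the fibre. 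Your proposed fix cannot close this gap: over the contractible interior of the disk the smooth bundle is trivial for trivial reasons, i.e.\ $M_{\prin}\cong \inte(M^\ast)\times L_0$, which says nothing about the smooth structure of $L_0$; and while the slice models do give principal circle bundles $L_0\to L_i$ over the edge leaves, extracting from them a free $\T^n$-action on $L_0$ requires knowing these bundles are trivial and that the circle actions commute, which the paper proves (Lemma~\ref{L: circle commute}) \emph{using} the already-established foliated homeomorphism to the homogeneous model. So the standardness your construction needs at its very first step is, in the paper, available only downstream of the homeomorphism classification; your outline has no independent argument for it.

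The second gap is the extension across the singular strata. Even granting standard leaves and a free action on $M_{\prin}$, patching the locally linear models at edges and vertices into one global smooth effective action is precisely the content of the Orlik--Raymond/Oh equivariant gluing, and recognizing $M$ as such a glued model is not reducible to ``checking combinatorial compatibility of weights at the vertices'': one needs a mechanism for comparing trivializations over overlapping pieces. In the paper that mechanism is the global cross-section, whose existence is a genuine theorem (proved by decomposing the disk, reducing to a circle action, and killing a class in $H^2$ of a pair), after which Theorem~\ref{T: weights classify the foliation} performs the comparison leaf-by-leaf via Dirichlet domains. Your proposal contains no substitute for the cross-sections; simple-connectivity of $M$ alone does not produce them. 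In short, your architecture (leaf space, weights, Oh's combinatorics) parallels the paper's, but the two load-bearing components --- standardness of the leaf smooth structures and the cross-section/gluing mechanism --- are missing, and the first cannot be supplied by the argument you sketch.
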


In the codimension one case, the same result holds. Namely, Galaz-Garc\'{i}a and Radeschi in \cite{Galaz-Garcia2015} give a classification up to foliated diffeomorphism of all compact, simply-connected manifolds with a codimension one $A$-foliations. They show that these foliations are homogeneous. 

They also point out that the lack of examples of $A$-foliations with exotic tori as leaves on simply-connected manifolds, and ask the following question:
\begin{question}
Does there exist a non-trivial example of a singular Riemannian foliation whose leaves are exotic tori?
\end{question}	
In this context, Theorem~\ref{T: Codimn 2 A foliation is diffeo to homogeneous fol} gives an answer in the negative to this question in the case that the foliation has codimension two. 
	
A possible approach to solve this question in the positive follows from the results in \cite{Galaz-Garcia2015}: we may consider a fiber bundle with fiber an exotic torus, over a manifold with sufficiently large second homotopy group. This approach was pursued by Farrell and Wu in \cite{FarrellWu2018} considering as base spaces $4$-dimensional simply-connected manifolds with large second homotopy groups, but they only managed to produce manifolds with finite fundamental group. Moreover,  the authors comment that they do not know of any smooth fiber bundle  such that the fiber is an exotic torus and the total space and base space are smooth simply-connected manifolds. So at the present there is no clear way to decide this question in general.

To prove Theorem~\ref{T: Codimn 2 A foliation is diffeo to homogeneous fol} we extend results of the theory of transformation groups to the setting of singular Riemannian foliations. We  focus on the general problem of comparing two different manifolds, each one endowed with a singular Riemannian foliation, via the leaf space, which is the topological space obtained as a quotient of the foliated manifold by the equivalence relation given by the foliation. A technique for classifying  compact manifolds admitting a smooth effective compact Lie group action,  up to homeomorphism is to compare their orbit spaces (see for example \cite{Orlik1970}, \cite{Kim1974},\cite{Fintushel77}, \cite{Oh1983},\cite{Grove2012}). We apply the same idea to smooth manifolds admitting a singular Riemannian foliation.

We begin by studying the homeomorphism type of the leaves of $A$-folia\-tions on compact, simply-connected manifolds. We prove in Proposition~\ref{P: Holonomy implies Bieberbach type} that, except in the case of $4$-dimensional leaves,  the leaves are all homeomorphic to tori or Bieberbach manifolds, extending results in \cite{Galaz-Garcia2015}.

\begin{theorem}
Let $(M,\fol)$ be an $A$-foliation, and consider a leaf $L\in \fol$. If $\dim(L)\neq 4$, then $L$ is homeomorphic to a Biberbach manifold. 
\end{theorem}

This result is due to the positive answer to the Borel conjecture for virtually abelian groups, except for dimension $4$ (see for example \cite{FarrellHsiang1983}). In \cite{Galaz-Garcia2018} the authors introduce $B$-foliations. These are $A$-foliations such that the leaves are Biberbach manifolds. In light of Proposition~\ref{P: Holonomy implies Bieberbach type}, from a topological view point any $A$-foliation without $4$-dimensional leaves is a $B$-foliation. 
Since Biberbach manifolds are charaterized as those closed manifolds admitting a flat Riemannian metric \cite[Corollary 5.1]{Charlap}, we propose here to modify the definition of $B$-foliation in \cite{Galaz-Garcia2015} to the following one: A $B$-foliation is an $A$-foliation such that the leaves with the induced Riemannian metric are flat, see Remark \ref{R: New definition of B-foliations}.

We also study the infinitesimal foliations of an $A$-foliation, as well as the holonomy of the leaves. This allows us to describe the tubular neighborhoods of the singular leaves, and we propose a finer stratification of the manifold. Both concepts of holonomy and the infinitesimal foliation in the case of homogeneous foliations are encoded in the isotropy subgroup of an orbit. For an $A$-foliation of codimension $2$ on a simply-connected manifold we define \emph{the weights of the foliation}, which encode the information of the infinitesimal foliation. 
The weights defined in this present work generalize the weights of smooth effective torus actions (defined   in \cite{Orlik1970}, \cite{Fintushel77}, \cite{Oh1983}), which encode the isotropy information of torus actions.

We say that two weighted leaf spaces are \emph{isomorphic}, if there is a weight preserving homeomorphism, and prove that for compact simply-connected manifolds with singular $A$-foliations of codimension $2$ the weighted orbit spaces determine up to foliated homeomorphism the foliated manifold. 

\begin{theorem}\label{T: weights classify the foliation}
Let $(M_1 , \fol_1 )$ and $(M_2 , \fol_2 )$ be compact , simply-connected Rie\-mannian manifolds with singular $A$-folia\-tions of codimension $2$.  If the leaf spaces $M_1^\ast$ and $M_2^\ast$ are isomorphic, then $(M_1,\fol_1)$ is foliated homeomorphic to $(M_2,\fol_2)$.
\end{theorem}

We point out that in the general  setting of classifying manifolds with singular Riemannian foliations via their leaf spaces, the best one can obtain is a classification up to foliated homeomorphism. This is because the leaf spaces are in general only metric spaces (i.e. they may not even be topological manifolds). 
  
For the proof of Theorem~\ref{T: weights classify the foliation} we need the existence of a \emph{cross-section}. This is a map $\sigma \colon M^\ast\to M$ such that for the natural projection $\pi\colon M\to M^\ast$ we have $\pi\circ \sigma = \mathrm{Id}_{M^\ast}$. Theorem \ref{T: first obstructions} we give a family of topological obstruction to the existence of a cross-section over a subset of the principal stratum $M_{\prin}^\ast$ for a general closed singular Riemannian foliation, and in Theorem \ref{T: second obstruction} we give a family of obstructions to extend a cross-section on a closed subset of the principal stratum to the whole leaf space $M^\ast$. Combining these obstructions, we give a sufficient condition for the existence of a cross-section in the following  corollary (see Section~\ref{S: obstruction to cross-sections} for the necessary definitions):

\begin{corollary}\label{C: existence of cross sections intro}
Let $(M, \fol)$ be a closed singular Riemannian foliation on a simply-connected manifold. Suppose that we have a cross-section $\tilde{\sigma}\colon  M_{\prin}^\ast\to M$, that the homotopy fiber $F_\pi$ of $\pi\colon M\to M^\ast$ is simple, and that there exist $A^\ast\subset M_{\prin}^\ast$ closed, such that $(M^\ast,A^\ast)$ is a $CW$-pair. If $M_{\prin}^\ast$ has the same homotopy type as $M^\ast$, then the cross-section $\tilde{\sigma}$ can be extended to a section $\sigma\colon M^\ast\to M$.
\end{corollary}

For the case of a smooth effective torus action of cohomogeneity two (i.e. a homogeneous foliation), on a compact simply-connected manifold Oh  proved in \cite{Oh1983} that  the leaf space is a weighted $2$-disk, with the weights satisfying some conditions. Furthermore, he proves that these conditions characterize  the orbit spaces of such actions among all weighted $2$-disks. Namely, given a weighted disk satisfying the conditions he gives  a procedure to construct a closed simply-connected smooth manifold with an effective smooth torus action of cohomogeneity two realizing the weighted disk as an orbit space. Oh called such weighted $2$-disks \emph{legally weighted}.

We  show that the weights of an $A$-foliation of codimension two on a compact, simply-connected manifold $M$ are legal weights in the sense of Oh. Moreover for a singular $A$-foliation of codimension two on a compact, simply-con\-nected manifold   it was proved in \cite{Galaz-Garcia2015} that the leaf space  is homeomorphic to a $2$-dimensional disk, and the boundary points of the leaf space correspond exactly to the singular leaves of the foliation. By studying closely the weight of the foliation we conclude that the results in \cite{Oh1983} apply. Namely, that there is a torus action on $M$ with the same weights as the ones of the  foliation. By Theorem~\ref{T: weights classify the foliation} we conclude that a singular $A$-foliation of codimension two on a compact, simply-connected manifold is, up to foliated homeomorphism, a homogeneous foliation.

As mentioned before,  in the problem of classifying manifolds with singular Riemannian foliations via  their leaf spaces, in general the best one can obtain is a classification up to foliated homeomorphism. In the case of singular $A$-foliations of codimension two  on compact, simply-connected manifolds since the leaf space is a $2$-disk, it is a smooth manifold with boundary in a unique way (it admits a unique smooth structure). Thus we can expect in this case to get a classification up to foliated diffeomorphism.

The next obstacle to obtaining a smooth classification, and obtaining Theorem~\ref{T: Codimn 2 A foliation is diffeo to homogeneous fol}, is the existence of exotic smooth structures on tori (see, for example \cite{Hsiang1969}, \cite{Hsiang}). As stated before, there exists regular $A$-foliations of codimension $4$ on compact manifolds with finite fundamental group  and leaves consisting of exotic tori \cite{FarrellWu2018}.  
To finish the proof of Theorem~\ref{T: Codimn 2 A foliation is diffeo to homogeneous fol} we study the diffeomorphism type of the leaves of a singular $A$-foliation of codimension two on a compact simply-connected smooth manifold, and prove that they are diffeomorphic to standard tori. 

We remark that $A$-foliaions of codimension $2$ with singular leaves are a priori infinitesimally polar due to a general argument by Lytchak \cite[Proposition 3.1]{Lytchak2010}. But since they are given by torus actions  by Theorem~\ref{T: Codimn 2 A foliation is diffeo to homogeneous fol}, they are polar by \cite[Example 4.4]{Grove2012}. Grove and Ziller showed in \cite{Grove2012} that a \emph{Coxeter polar action} is determined by the orbit space together with the isotropy information. This is an analogous statemtent of Theorem~\ref{T: weights classify the foliation} for such polar actions (in this case the weights are the isotropy information of each orbit). Thus it is natural to if an analogous statement to Theorem~\ref{T:Family of Obstructions} holds for more general $A$-foliations.
\begin{question}
Are polar closed singular Riemannian foliations (or $A$-foliations) determined by the leaf space, and information about the infinitesimal foliation over the strata corresponding to the singular leaves?
\end{question}

Our article is organized as follows. In Section~\ref{S: Preliminaries} we give an overview of the theory of Lie group actions and singular Riemannian foliations, such as the infinitesimal foliation and the holonomy. In Section~\ref{S: A-foliations} we study the homeomorphism type of the leaves of a general $A$-foliation on a compact simply-connected manifold. In Section~\ref{S: A-foliations of codimension 2}  we define the weights of an $A$-foliation and prove  Theorem~\ref{T: weights classify the foliation}. We finish the proof of Theorem~\ref{T: Codimn 2 A foliation is diffeo to homogeneous fol} in Section~\ref{S: smooth structure of leaves}, where we study the diffeomorphism type of the leaves of a singular $A$-foliation of codimension two on a simply-connected manifold. We end this manuscript with the  presentation of the topological sufficient conditions for the existence of cross-sections in  Section~\ref{S: obstruction to cross-sections}.\\[1em]


\begin{ack} I would like to thank Caterina Campagnolo, Fernando Galaz-Garc\'{i}a, Karsten Grove, Jan-Bernard Korda\ss, Adam Moreno  and Wilderich Tuschmann for helpful conversations on the  constructions presented herein. I thank Alexander Lytchak and Oliver Görtsches for helpful discussions which led to the obstruction results for the existence of cross-sections. I thank Thomas Farrell and Marco Radeschi for discussions and comments about the smooth structure of the leaves. Last I thank the anonymous referees for giving suggestions to improve the presentation of the manuscript, and the suggestion for the study of polar $A$-foliations.
\end{ack}	

\section{Preliminaries}\label{S: Preliminaries}


\subsection{Group actions}
Let $G\times M \to M$, $p \mapsto g \star p$, be a smooth action of a compact Lie group $G$ on a smooth manifold $M$. The \emph{isotropy group}  at $p$ is defined 	as $G_p = \{g\in G \mid g \star p = p \}$. We say that the orbit $G(p)$ is \emph{principal} if the isotropy group $G_p$ acts trivially on the normal space to the orbit at $p$. It is a well known fact that the set of principal orbits  is open and dense in $M$. Since the isotropy groups	of principal orbits are conjugate in $G$, and since the orbit $G(p)$ is diffeomorphic to $G/G_p$, all principal orbits have the same dimension. If $G(p)$ has the same dimension as a principal orbit but the isotropy group acts non-trivially on the normal space to the orbit at $p$ we say that the orbit is \emph{exceptional}. If the dimension of the orbit $G(p)$ is less than the dimension of a principal orbit, we say that the orbit is \emph{singular}. We denote the set of exceptional orbits by $E$ and the set of singular orbits by $Q$. We denote the \emph{orbit space} $M/G$ by $M^\ast$ and we define the \emph{cohomogeneity} of the action to be the dimension of the orbit space $M^\ast$ (or, equivalently, the codimension in $M$ of a principal orbit). Let $\pi \colon M \to M^\ast$ be the orbit projection map onto the orbit space. We denote by  $X^\ast$ the image of a subset $X$ of $M$ under the orbit projection map $\pi$. The action is called \emph{effective} if the intersection of all isotropy subgroups of the action is trivial, i.e if $\cap_{p\in M} G_p = \{e\}$. We say that a Riemannian metric is \emph{invariant} under the action if the group acts by isometries with respect to this metric. For every effective smooth action of a compact Lie group $G$ on a smooth manifold $M$ there exists an invariant Riemannian metric (see, for example  \cite[Theorem~3.65]{Alexandrino}).

\subsection{Effective torus actions}\label{S: torus actions}
	
	
From now on we denote the $n$-torus with a Lie group structure by $\T^n$ in order to distinguish it from its underlying topological space $T^n$. By identifying the torus group $\T^n$ with $\R^n / \Z^n$ we note that a circle subgroup of $\T^n$ is determined by a line through the origin given by a vector $\overline{v} = (a_1, \ldots , a_n )$ $\in \Z^n$, with $a_1, \ldots, a_n$ relatively prime, via $G(a) = \{(e^{2\pi i t a_1}, \ldots , e^{2\pi i t a_n} ) \mid 0 \leqslant t \leqslant 1 \}$ (for a more detailed discussion see \cite{Oh1983}).  Recall that a smooth, effective action of a torus on a smooth manifold always has trivial principal isotropy. Therefore, a smooth, effective action of an $n$-torus on a smooth $(n+2)$-manifold  has cohomogeneity two.
		
Let $M$ be a closed, simply-connected, smooth  $(n+2)$-manifold, $n\geqslant 2$, on which a compact Lie group $G$ acts smoothly and effectively with cohomogeneity two. It is well known  that, if the set $Q$ of singular orbits is not empty, then the orbit space $M^\ast$ is homeomorphic to a $2$-disk $D^\ast$ whose boundary is $Q^\ast$ (see \cite[Chapter~IV]{Bredon}). Moreover, the 	interior points correspond to principal orbits (i.e.~the action has no exceptional orbits). The orbit space structure was analyzed in \cite{Kim1974}, \cite{Oh1983} when 	$G = \T^n$ for $n \geqslant 2$ (see also \cite{Galaz-GarciaKerin2014}). In this case the only possible non-trivial 	isotropy groups are circle subgroups and $2$-torus subgroups of $\T^n$. Furthermore, the boundary  $Q^\ast$  decomposes as a a finite union of $m \geqslant n$ open edges $\Gamma_i$ and $m$-vertices $F_i$, where $F_i$ sits between $\Gamma_{i-1}$ and $\Gamma_i$ if $i\geqslant 2$ and $F_1$ sits between $\Gamma_{m}$ and $\Gamma_1$. The interior points in an edge $\Gamma_i$ correspond to  orbits with a fixed circle subgroup $G(\overline{v}_i)$ determined by a vector $\overline{v}_i  = (a_{1i},\ldots,a_{ni})\in \Z^n$. For $i\geqslant 2$, the orbit corresponding to a vertex $F_i$ is an orbit with isotropy equal to the product of circle 	subgroups $G(\overline{v}_i) \times G(\overline{v}_{i+1})$, i.e. a $2$-torus in $\T^n$; for $i = 1$, the isotropy of the orbit corresponding to $F_1$ is given by $G(\overline{v}_m)\times G(\overline{v}_1)$.	This structure is illustrated in Figure~\ref{F:ORBIT_SPACE} (cf.~\cite[Figure~1]{Galaz-GarciaKerin2014}).
		
	
		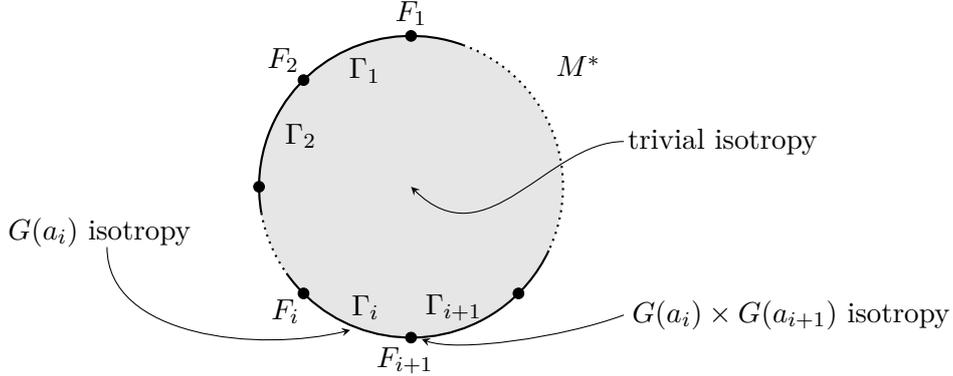
\begin{figure}
		  \begin{tikzpicture}[scale=2]
		    \path[coordinate] (0.5,0.866)  coordinate(A) 
		                (90:1) coordinate(B)
		                (-0.5,0.866) coordinate(C)
		                (135:1) coordinate(D)
		                (-1,0) coordinate(E)
		                (-0.866,-0.5) coordinate(F)
		                (225:1) coordinate(G)
		                (0,-1) coordinate(H)
		                (315:1) coordinate(I);

		      \fill[gray!20]	([shift=(30:1cm)]0,0) arc (30:390:1cm);
		    	\draw[thick] ([shift=(70:1cm)]0,0) arc (70:190:1cm);
		    	\draw[dotted,thick] ([shift=(190:1cm)]0,0) arc (190:215:1cm);	

		    	\draw[thick] ([shift=(215:1cm)]0,0) arc (215:335:1cm);
		    \draw[dotted,thick] ([shift=(70:1cm)]0,0) arc (70:-25:1cm);
		
		    \draw[color = black] (1.1, 0.8) node{$M^\ast$};
		    
		    \draw[color = black] (111.5:0.83) node{$\Gamma_1$};
		    \draw[color = black] (155:0.8) node{$\Gamma_2$};
		    \draw[color = black] (-0.3,-0.8) node{$\Gamma_i$};
		    \draw[color = black] (0.28,-0.8) node{$\Gamma_{i+1}$};
		    \draw[color = black] (90:1.15) node{$F_1$}; 
		    \draw[color = black] (135:1.19) node{$F_2$};
		    \draw[color = black] (225:1.17) node{$F_i$};
		    \draw[color = black] (268:1.15) node{$F_{i+1}$};
		
		    \draw[color = black] (2.05, 0.3) node{trivial isotropy};
		    \draw[stealth-] (0,0) .. controls +(0.5,-0.5) and +(-0.5,0) .. (1.4,0.3);
		
		    \draw[color = black] (2.5,-0.85) node{$G(a_{i}) \times G(a_{i+1})$ isotropy};
		    \draw[-stealth] (1.4,-0.85) .. controls (1,-1) and (0.3,-1.1) .. (0.07,-1.01);
		
		    \draw[color = black] (-2.05, -0.3) node{$G(a_i)$ isotropy};
		    \draw[-stealth] (-2,-0.4) .. controls (-2,-1) and (-1,-1.1) .. (-0.4,-0.95);
		
		    \filldraw[black] 
		                     (B) circle (1pt)
		                     (D) circle (1pt)
		                     (E) circle (1pt)
		                     (G) circle (1pt)
		                     (H) circle (1pt)
		                     (I) circle (1pt);
		
		  \end{tikzpicture}
		  		\caption{Orbit space structure of a cohomogeneity-two torus action on a 
		  		closed, simply-connected manifold.}
				\label{F:ORBIT_SPACE}
		  \end{figure}	  

The vectors $\{\overline{v}_1,\ldots, \overline{v}_m \}$  of the isotropy invariants $G(\overline{v}_i)$ are called the \emph{weights} of the orbit space. The weight $\overline{v}_i$ associated to a singular orbit is given by the following principal bundle (see \cite[Proposition~3.41]{Alexandrino}):
	\begin{linenomath}
	\begin{align}
		G(\overline{v}_i)\to \T^n \to \T^{n-1}.
	\end{align}
	\end{linenomath}
The weight $\overline{v}_i$ determines the embedding of the isotropy subgroup $G(\overline{v}_i)$ into $\T^n$.	Following Oh \cite{Oh1982}, we say that the the orbit space  $M^*$ is \emph{legally weighted}, if we can find a sub-collection of $n$ weights $\{\overline{v}_{i_1},\overline{v}_{i_2},\ldots, \overline{v}_{i_n} \big\}$ such that the matrix  
	 \[
\begin{pmatrix}
	 \mid & \mid   & \mid  & \mid  \\
	\bar{v}_{i_1} & \bar{v}_{i_2} & \cdots & \bar{v}_{i_n} \\
	 \mid &  \mid  & \mid  & \mid 
\end{pmatrix}
	 	 =
\begin{pmatrix}
	a_{1i_1} & a_{1i_2} & \cdots & a_{1i_n} \\
	a_{2i_1} & a_{2i_2} & \cdots & a_{2i_n} \\
	\vdots  & \vdots			&	\ddots & \vdots	\\
	a_{ni_1} & a_{ni_2} & \cdots & a_{ni_n}
\end{pmatrix}
\]
has determinant $\pm 1$. By \cite{Oh1983} any effective smooth $\T^n$-action on a simply-connected $(n+2)$-manifold has legal weights. Conversely, given a disk $N^\ast$ equipped with legally weighted orbit data, there is a closed, simply-connected smooth $(n+2)$-manifold $N$ with an effective action of a torus $\T^n$ such that $N^\ast$ is the orbit space of $N$ (see \cite[Section~4]{Oh1983}). We state this fact for future reference.
	 
\begin{thm}[Remark~4.7 in \cite{Oh1983}]\label{t: Weights realized}
For $n\geqslant 2$ and a family of legal weights $\{\bar{v}_{1},\ldots,\bar{v}_{m}\}\subset \Z^n$ with $m\geqslant n$ there exists a closed, simply-connected $(n+2)$-manifold admitting a cohomogeneity two $\T^n$-action that realizes the family $(a_{i1},\ldots,a_{im})$ as weights.
\end{thm}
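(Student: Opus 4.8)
The plan is to reconstruct the manifold $N$ directly from the combinatorial data of the weighted disk, following the equivariant reconstruction philosophy of Orlik--Raymond and Oh: one assembles $N$ by gluing standard equivariant model pieces over a neighbourhood of each stratum of the disk, and then extracts closedness, effectiveness and simple-connectivity from the weight combinatorics. Write $D^\ast$ for the weighted $2$-disk whose boundary circle is subdivided into $m$ edges $\Gamma_1,\dots,\Gamma_m$ carrying the prescribed circle isotropies $G(a_i)$ and $m$ vertices $F_i$ carrying the $2$-torus isotropies $G(a_i)\times G(a_{i+1})$ (indices taken mod $m$), with trivial isotropy over $\inte(D^\ast)$.

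First I would fix the local equivariant models dictated by the slice theorem. Over an interior point the model is the principal piece $\inte(D^2)\times\T^n$, on which $\T^n$ acts freely by translation. Over the interior of an edge $\Gamma_i$ the orbit has dimension $n-1$ and the slice representation is $\R\oplus\C$, with $S^1=G(a_i)$ fixing the $\R$-factor and rotating $\C$; since a weight $a_i$ is primitive it spans a direct summand of $\Z^n$, so we may split $\T^n\cong G(a_i)\times\T^{n-1}$ and present a tube of this orbit as $\T^n\times_{G(a_i)}(\R\oplus\C)$, in which the circle $G(a_i)$ collapses along the central $\T^{n-1}$-orbit. Over a vertex $F_i$ the orbit has dimension $n-2$, the slice is $\C^2$ with $G(a_i)\times G(a_{i+1})$ acting by the standard coordinatewise rotation, and the tube is $\T^n\times_{G(a_i)\times G(a_{i+1})}\C^2$; here one uses that the adjacent pair $(a_i,a_{i+1})$ spans a rank-two direct summand of $\Z^n$, so that $G(a_i)\times G(a_{i+1})$ is an embedded $\T^2$, a compatibility condition intrinsic to the weight data.

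Next I would glue these pieces along their overlaps over a collar of $\partial D^\ast$, producing a smooth $(n+2)$-manifold $N$ equipped with a smooth $\T^n$-action whose orbit map realizes exactly $D^\ast$ with the given weights. Because all boundary strata of $D^\ast$ are filled in by collapsing circles, $N$ has no boundary and, being built over the compact disk, is closed. The action is effective with no further hypothesis: the interior orbits are free, so the ineffective kernel, lying in every isotropy group, is already trivial.

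It then remains to verify simple-connectivity, which is where legality enters. A van Kampen argument, together with the fact that each collapsing circle $G(a_i)$ bounds a disk in the slice over $\Gamma_i$ and hence kills the homotopy class $a_i\in\pi_1(\T^n)$, yields
\[
\pi_1(N)\;\cong\;\pi_1(\T^n)\big/\langle a_1,\dots,a_m\rangle\;=\;\Z^n\big/\langle a_1,\dots,a_m\rangle .
\]
Legality supplies a subfamily $\{a_{i_1},\dots,a_{i_n}\}$ whose matrix has determinant $\pm1$, hence a $\Z$-basis of $\Z^n$; therefore $\langle a_1,\dots,a_m\rangle=\Z^n$ and $\pi_1(N)=0$. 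The main obstacle is the gluing step: one must check that the trivializations $\T^n\cong G(a_i)\times\T^{n-1}$ chosen over adjacent edges patch compatibly through the vertex models $\T^n\times_{G(a_i)\times G(a_{i+1})}\C^2$, so that the assembled object is a genuine smooth (and orientable) closed manifold rather than merely a topological quotient. This coherent global patching is exactly where the primitivity, the adjacency conditions, and the determinant condition on the weights must be used together.
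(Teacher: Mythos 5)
Your proposal is correct in outline, but note that the paper itself does not prove this theorem: it is imported from Oh (Remark~4.7 and Section~4 of \cite{Oh1983}), and Oh's argument is precisely the Orlik--Raymond style equivariant assembly you describe---slice-theorem model tubes over the interior, edge and vertex strata of the weighted disk, glued equivariantly over the free part, followed by a van Kampen computation giving $\pi_1(N)\cong\Z^n/\langle a_1,\dots,a_m\rangle$, which legality then kills. So you have reconstructed the cited argument rather than found a different one. One correction to your closing sentence: the determinant condition plays no role in the patching step. Gluing adjacent vertex tubes $\T^n\times_{G(a_i)\times G(a_{i+1})}\C^2$ to edge tubes $\T^n\times_{G(a_i)}(\R\oplus\C)$ and to the principal piece requires only that each $a_i$ be primitive (so $G(a_i)$ is a closed circle subgroup) and that each adjacent pair span a rank-two direct summand (so the vertex isotropy is an embedded $2$-torus); given that, existence and uniqueness of equivariant tubular neighborhoods make the overlaps match, since the overlaps are tubes around orbits with the same isotropy and the same slice representation. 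The determinant $\pm 1$ condition enters exactly once, at the end, to conclude that $\langle a_1,\dots,a_m\rangle=\Z^n$ and hence $\pi_1(N)=0$.
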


Let $M$ and $N$ be two closed, simply-connected smooth $(n+2)$-manifolds with effective $\T^n$ actions, with $n\geqslant 2$. Observe that both $M^\ast$ and $N^\ast$  are closed $2$-disk, and thus have a unique smooth structure. A  diffemorphism $f^\ast\colon M^\ast\to N^\ast$ is called \emph{weight preserving} if the isotropy subgroup of the orbit in $N$ corresponding to  $f^\ast(x^\ast)$ equals the isotropy subgroup of the orbit in $M$ corresponding to $x^\ast$. We say that the orbit spaces $M^*$ and $N^*$ are \emph{isomorphic} if there exists a weight-preserving diffeomorphism between them. From the work of Oh given   a weight-preserving diffeomorphism $f^* \colon M^* \to N^\ast$, then there exists an equivariant diffeomorphism $f:M\to N$ which covers $f^*$. More generally, one has the following equivariant classification theorem.


\begin{thm}[Theorem~2.4~\cite{Kim1974}, and Theorem~1.6~\cite{Oh1983}]\label{T:OH_EQUIV}
Two closed, sim\-ply-con\-nected smooth $(n+2)$-manifolds with an effective $T^n$-action are equivariantly diffeomorphic if and only if their orbit spaces are isomorphic. 
\end{thm}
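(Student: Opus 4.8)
The plan is to prove the two implications separately. The ``only if'' direction is immediate: an equivariant diffeomorphism $f\colon M\to N$ descends to a homeomorphism $f^\ast\colon M^\ast\to N^\ast$, and since $f$ carries each isotropy group $G_p$ onto $G_{f(p)}$ via the identity automorphism of $\T^n$, it preserves the embeddings of the isotropy circles $G(a_i)$ into $\T^n$, and hence preserves the weights; moreover $f^\ast$ is a diffeomorphism because, away from the singular strata, $f$ is a diffeomorphism of trivial principal bundles. Thus $M^\ast$ and $N^\ast$ are isomorphic. The substance of the theorem is the converse, which I would prove by reconstructing $M$ equivariantly from its weighted orbit space and checking that a weight-preserving diffeomorphism $f^\ast$ admits an equivariant lift.

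First I would fix the local equivariant structure using the slice theorem. Since the action is effective the principal isotropy is trivial, so over $\inte(M^\ast)$ the projection $\pi$ is a principal $\T^n$-bundle over an open $2$-disk; as the disk is contractible this bundle is trivial, and I fix an equivariant trivialization $\Phi_M\colon \pi^{-1}(\inte M^\ast)\to \inte(M^\ast)\times\T^n$, and likewise $\Phi_N$ on $N$. Along an edge $\Gamma_i$ the isotropy is $G(a_i)$, the orbit is an $(n-1)$-torus, and the slice representation of $G(a_i)$ on the $3$-dimensional normal space splits as a trivial line tangent to $\Gamma_i$ plus a rotation on $\C$ whose weight is precisely the datum recorded by $a_i$; at a vertex $F_i$ the isotropy $G(a_i)\times G(a_{i+1})$ acts on the $4$-dimensional normal space as independent rotations of $\C^2$, with weights read off from $a_i$ and $a_{i+1}$. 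Because $f^\ast$ preserves the weights, it identifies the slice representations over corresponding strata by equivariant linear isomorphisms, and thereby identifies the equivariant tubular neighborhoods of the singular strata of $M$ with those of $N$.

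It then remains to assemble a global equivariant lift from these pieces. Over the interiors I would set $f=\Phi_N^{-1}\circ\bigl(f^\ast\times\Id_{\T^n}\bigr)\circ\Phi_M$, while over neighborhoods of the edges and vertices I would take $f$ to be the equivariant tubular-neighborhood identification produced in the previous step. The two prescriptions agree only up to a gauge transformation with values in $\T^n$ on the overlap, and I expect the main obstacle to be reconciling these gauge choices: the discrepancies selected independently along the several edges must close up consistently as one traverses the boundary circle through the vertices. This is an obstruction-theoretic matching problem whose relevant classes lie in the cohomology of the orbit space with coefficients in the homotopy groups of $\T^n$; since $M^\ast$ is a contractible disk and $M$ is simply connected (which, as recorded above, forces the weights to be legal and in particular to satisfy the determinant condition), these obstructions vanish, so the gauge discrepancy can be absorbed into a redefinition of the interior trivialization $\Phi_M$. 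Keeping careful track of the weights at every stage is exactly what guarantees that no equivariant invariant beyond the isomorphism class of the weighted orbit space survives, so the patched map is a well-defined equivariant diffeomorphism $f\colon M\to N$ covering $f^\ast$, completing the converse.
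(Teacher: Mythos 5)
The paper itself offers no proof of this statement: it is imported from Kim (Theorem~2.4) and Oh (Theorem~1.6), so your proposal can only be measured against those classical arguments. Your overall architecture is in fact theirs: the ``only if'' direction, the triviality of the principal $\T^n$-bundle over the contractible interior, and the identification of equivariant tubular neighborhoods of edges and vertices from the weights via the slice theorem (the models $\T^n\times_{G(a_i)}(\R\oplus\C)$ and $\T^n\times_{G(a_i)\times G(a_{i+1})}\C^2$) are all correct and are the standard ingredients.

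The gap is in the step you yourself single out as the main obstacle, and the mechanism you give there would fail. A redefinition of the interior trivialization is a gauge transformation, i.e.\ a map $\inte M^\ast\to\T^n$, and such a map is nullhomotopic precisely because $\inte M^\ast$ is contractible; so it can only change the discrepancy on the overlap annulus by nullhomotopic maps, whereas the discrepancy class lives in $[S^1,\T^n]\cong\Z^n$ and need not vanish for a given choice of boundary identifications. Contractibility of the disk here works against you, not for you, and legality of the weights is not the relevant input either (it matters for the realization theorem, Theorem~\ref{t: Weights realized}, not for uniqueness). What actually closes the argument is that the discrepancy must be absorbed on the \emph{singular} side: because $\T^n$ is abelian, for any suitable $\T^n$-valued function $h$ on the quotient of an edge or vertex neighborhood, the twist $x\mapsto h(\pi(x))\cdot x$ is a well-defined equivariant diffeomorphism of that neighborhood \emph{even across the collapsed orbits}; since the union of these neighborhoods retracts to the boundary circle, such twists realize every class in $\Z^n$ on the overlap. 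Equivalently, the total matching obstruction is a \v{C}ech class with values in the sheaf of $\T^n$-valued functions, which by the exponential sequence is $H^2(D^2;\Z^n)=0$. Once this is corrected, your patching yields an equivariant homeomorphism; to obtain the diffeomorphism asserted by the theorem one must still smooth it along the overlaps (equivariant collars, and corner smoothing at the vertices), which is the remaining technical content of Kim's and Oh's proofs.
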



\subsection{Singular Riemannian Foliations}

A \emph{singular Riemannian foliation} on a Riemannian manifold $M$, which we denote by $(M,\fol)$, is the decomposition of $M$ into  a collection $\fol=\{L_p\mid p\in M\}$ of disjoint connected, complete, immersed submanifolds $L_p$, called \emph{leaves}, which may not be of the same dimension, such that the following conditions hold: 
\begin{enumerate}[(i)]
\item Every geodesic meeting one leaf perpendicularly, stays perpendicular to all the leaves it meets.
\item For each point $p\in M$ there exist local smooth vector fields spanning the tangent space of the leaves.
\end{enumerate}
If $(M,\fol)$ satisfies the first condition, then we say that $(M,\fol)$ is a \emph{transnormal system}. If it satisfies the second one, we say that $(M,\fol)$ is a \emph{smooth singular foliation}. When the dimension of the leaves is constant, we say that the foliation is a \emph{regular Riemannian foliation} or just a \emph{Riemannian foliation}. We refer the reader to \cite{Alexandrino2012} for a more in depth discussion of singular Riemannian foliations.

A natural class of examples of singular Riemannian foliations is given by effective actions of groups by isometries. If we only have a compact Lie group $G$ acting smoothly on $M$, the existence of an invariant Riemannian metric, guarantees that we may consider $G$ acting by isometries (see \cite[Theorem~3.65]{Alexandrino}). A singular Riemannian foliation that arises from a group action is called a \emph{homogenous foliation}. We say that a singular Riemannian foliation is \emph{closed} if all the leaves are closed. The \emph{dimension} of a  foliation $\fol$, denoted by $\dim \fol$, is the maximal dimension of the leaves of $\fol$. We call the foliation \emph{trivial} when $\dim \fol = 0$ or $\dim \fol = \dim M$.  In the first case the leaves are collections of points, and in the second case there is only one leaf, the total manifold. The \emph{codimension} of a foliation is,
\[
	\codim(M,\fol) = \dim M -\dim \fol.
\]
The leaves of maximal dimension are called \emph{regular leaves} and the remaining leaves are called \emph{singular leaves}. Since $\fol $ gives a partition of $M$, for each point $p\in M$ there is a unique leaf, which we denote by $L_p$, that contains $p$. We say that $L_p$ is the \emph{leaf through} $p$. The quotient space $M/ \fol$ obtained from the partition of $M$, is the \emph{leaf space} and the quotient map $\pi \colon M \to M/\fol$ is the \emph{leaf projection map}. The topology of $M$ yields a topology on $M/\fol$, namely the quotient topology. With respect to this topology the quotient map is continuous.  We denote the leaf space $M/\fol$ from this point onward by $M^\ast$ as in the homogeneous case. We denote by $S^\ast$ the image $\pi(S)$ of a subset $S\subset M$ under the leaf projection map.

A singular Riemannian foliation $(M,\fol)$ induces a stratification on $M$. For  $k\leqslant \dim \fol$  we define the $k$-dimensional stratum as:
\begin{linenomath}
\[
	\Sigma_{(k)} = \{p\in M \mid  \dim L_p = k\}.
\]
\end{linenomath}
The \emph{regular stratum} $\Sigma_{\reg}=\Sigma_{(\dim \fol)}$ is an open, dense and connected submanifold of $M$ (see \cite[Lemma~2.2.2]{Radeschi2012}). The foliation restricted to the regular stratum yields a Riemannian foliation $(\Sigma_{\reg},\fol)$, and $\Sigma_{\reg}^\ast$ is open and dense in the leaf space $M^\ast$. Furthermore, by  \cite[Proposition~3.7]{Molino}, if $(M,\fol)$ is a singular Riemannian foliation with closed regular leaves, then $\Sigma_{\reg}^\ast$ is an orbifold. 
Note that the foliation is regular if and only if  $\Sigma_{\reg} = M$.


\subsection{Infinitesimal foliation}\label{SS: infinitesimal foliation}

We start by fixing a point $p\in M$, and consider the normal tangent space $\nu_p L_p$ to the leaf $L_p\subset M$. Next we consider  $\varepsilon > 0$ small enough and set $\nu_p^\varepsilon  = \exp_p(\nu_p L_p)\cap B_{\varepsilon}(0)$, with $B_{\varepsilon}(0)$ the ball of radius $\varepsilon$ in $T_p M$. Taking $S_p = \exp_p(\nu_p^\varepsilon L_p)$ we consider the intersection of the leaves of $\fol$ with $S_p$. This induces a foliation $\fol |_{S_p}$ on $S_p$ by setting the leaves of $\fol |_{S_p}$  to be the connected component of the intersection between the leaves of $\fol$ and  $S_p$. This foliation may not be a singular Riemannian foliation with respect to the induced metric of $M$ on $S_p$, i.e the leaves of $\fol |_{S_p}$ may not be equidistant with respect to the induced metric. Nevertheless, by \cite[Proposition~6.5]{Molino}, the pull-back foliation $ \fol^p = \exp_p^{\ast}(\fol |_{S_p})$ is a singular Riemannian foliation on $\nu_p^\varepsilon L_p$ equipped with the Euclidean metric. We call this foliation $(\nu_p^\varepsilon L_p,\fol^p)$ the \emph{infinitesimal foliation at  $p$}. By \cite[Lemma~6.2]{Molino} the infinitesimal foliation is invariant under homotheties that fix the origin. Furthermore the origin $\{0\}\subset \nu_p^\varepsilon L_p$ is a leaf of the infinitesimal foliation. Since the leaves of $\fol^p$ stay at a constant distance from each other, the fact that the origin is a leaf implies that any leaf of $\fol^p$ is at a constant distance from the origin, and thus it is contained in a round sphere centered at the origin. From this last fact it follows that we may  consider the infinitesimal foliation restricted to the unit normal sphere of $\nu_p L_p$, which we denote by  $\Sp^{\bot}_p$, yielding a foliated round sphere $(\Sp^{\bot}_p, \fol^p)$ with respect to the standard round metric of $\Sp^{\bot}_p$ which is also called the infinitesimal foliation. From here on when we say ``infinitesimal foliation" we refer to  $(\Sp^{\bot}_p, \fol^p)$, or equivalently, to $(\nu_p L_p, \fol^p)$, since  $(\nu_p L_p, \fol^p)$ is invariant under homothetic transformations and thus  it can be recovered from $(\Sp^{\bot}_p, \fol^p)$.

For the particular case of a homogeneous singular Riemannian foliation by an action of a compact Lie group $G$, the infinitesimal foliation at a point $p$ is given by taking the connected components of the orbits of the action of the isotropy group $G_p$ on $\Sp^\perp_p$ via the isotropy representation. Therefore, denoting by $G_p^0$ the connected component of $G_p$ containing the identity element, the infinitesimal foliation is given by considering only the action of $G_p^0$ on $\Sp^\perp_p$ given  by the isotropy representation.


\subsection{Holonomy}\label{SS: holonomy and local projections}

Given a leaf $L\subset M$, a point $p\in L$, and  a path $\gamma\colon [0,1]\to L$ starting at $p$, the following theorem gives us a foliated transformation from $\nu_p L$ to the total space $\nu L$ of the normal bundle $\nu L\to L$. 

\begin{thm}[Corollary~1.5 in \cite{Radeschi15}]\label{t: Sliding along a singular leaf}
Let $L$ be a closed leaf of a singular Riemannian foliation $(M,\fol)$, and let $\gamma\colon [0,1]\to L$ be a piecewise smooth curve with $\gamma(0) = p$. Then there is a map $G\colon [0,1] \times \nu_p L\to \nu L$ such that:
\begin{enumerate}[(i)]
\item $G(t,v)\in \nu_{\gamma(t)} L$ for every $(t,v)\in [0,1]\times \nu_p L$.\label{t: Sliding along a singular leaf a}
\item For every $t\in [0,1]$, the restriction $G\colon \{t\}\times \nu_p L\to \nu_{\gamma(t)} L$ is a linear isometry preserving the leaves  of $\nu_L$.\label{t: Sliding along a singular leaf b}
\item For every $s \in \R$, the map $\exp_{\gamma(t)} (sG(t, v))$ belongs to the same leaf as \linebreak $\exp_p(sv)$.\label{t: Sliding along a singular leaf c}
\end{enumerate}

\end{thm}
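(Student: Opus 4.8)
The plan is to construct $G$ as a \emph{foliated holonomy transport} along $\gamma$ in the normal bundle $\nu L$, the natural extension to the singular setting of the transverse holonomy of a regular Riemannian foliation. First I would reduce to the essential case: since a piecewise smooth curve is a concatenation of smooth arcs and the transports along consecutive arcs compose (the terminal data of one arc is the initial data of the next), it suffices to treat a single smooth arc, and by compactness I may subdivide $[0,1]$ so that $\gamma([0,1])$ lies in one distinguished tubular neighborhood $\Tub(L)=\exp(\nu^\varepsilon L)$ on which a local model of the foliation is available.

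Inside $\Tub(L)$ the foliation $\fol$ pulls back under $\exp$ to a foliation of $\nu^\varepsilon L$. By transnormality every radial geodesic $s\mapsto\exp_q(sw)$, $w\in\nu_q L$, meets the leaves perpendicularly, so the pulled-back foliation restricts on each fiber to the infinitesimal foliation $\fol^q$ and varies smoothly with $q\in L$. Fixing $v\in\nu_p L$, I would transport the point $\exp_p(sv)$ along $\gamma$ while keeping it inside its own leaf of $\fol$, using the local structure of a singular Riemannian foliation with closed leaves near $L$ (\cite[Proposition~6.5]{Molino}); this produces for each $t$ a vector $G(t,v)\in\nu_{\gamma(t)}L$, which is property (i). By the very construction $\exp_{\gamma(t)}(sG(t,v))$ and $\exp_p(sv)$ lie on the same leaf, which is property (iii); and because the homothetic invariance of the infinitesimal foliation (\cite[Lemma~6.2]{Molino}) lets the transport be carried out for all radii simultaneously and compatibly, $G$ is well defined on all of $\nu_p L$.

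The crux is property (ii), that each $G(t,\cdot)$ is a \emph{linear} isometry carrying $\fol^p$ to $\fol^{\gamma(t)}$. Norm preservation follows from the equidistance built into the Riemannian hypothesis: the foliated transport keeps the distance to $L$ fixed, so $|G(t,v)|=|v|$. Linearity is where homothetic invariance becomes indispensable: the transport commutes with the homothety action on the normal fibers, hence is determined by its restriction to the unit normal sphere $\Sp^{\bot}_p$ and extends radially, and a distance-preserving, homothety-equivariant self-map of a Euclidean normal fiber fixing the origin must be the restriction of an orthogonal transformation, i.e.\ a linear isometry. That it sends leaves to leaves is immediate, the transport being foliated.

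I expect the main obstacle to be showing that the holonomy transport is \emph{well defined and independent of the auxiliary choices} (the subdivision, the foliated charts, and the chosen radii), together with the linearity argument; both rest on the structural fact that near a closed leaf a singular Riemannian foliation is determined by its infinitesimal foliation together with a genuine holonomy, which is the content of the linearization result underlying \cite{Radeschi15}. As a consistency check, in the homogeneous case, where the leaves are the orbits of an isometric action and $\gamma(t)=g_t\star p$, the transport is realized by the isometry differential $(dg_t)_p\colon\nu_p L\to\nu_{\gamma(t)}L$, which conjugates the isotropy representations and satisfies $\exp_{\gamma(t)}(s\,(dg_t)_p v)=g_t\star\exp_p(sv)$; this is generally \emph{not} the Levi-Civita parallel transport along $\gamma$, confirming that the correct map is the foliated holonomy rather than the metric one.
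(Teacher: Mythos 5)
The paper itself contains no proof of this statement: it is imported verbatim as Corollary~1.5 of \cite{Radeschi15} (Alexandrino--Radeschi), so the only thing to assess is whether your sketch would stand on its own. It would not, for two concrete reasons. First, your construction of the transport --- ``transport the point $\exp_p(sv)$ along $\gamma$ while keeping it inside its own leaf of $\fol$'' --- does not define a map. Near a closed (in particular singular) leaf $L$, the leaf through $\exp_p(sv)$ meets the slice at $\gamma(t)$ in an entire infinitesimal leaf, which is generically of positive dimension; there are therefore many ways for the point to move while staying in its leaf, and no canonical one is singled out by transnormality or by Molino's local model. You flag exactly this (``well defined and independent of the auxiliary choices'') as the main obstacle, but you discharge it by appealing to ``the linearization result underlying \cite{Radeschi15}'' --- which is precisely the machinery whose output is the theorem you are supposed to prove, so the argument is circular at its core.

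Second, the linearity argument for property (ii) is incorrect as stated. What equidistance gives you is norm preservation, $|G(t,v)|=|v|$, i.e.\ preservation of the distance to the origin only; and norm preservation together with homothety equivariance does \emph{not} force linearity --- the radial extension of an arbitrary diffeomorphism of the unit sphere is homothety-equivariant, fixes the origin, and preserves norms without being linear. Only preservation of all pairwise distances would yield an orthogonal map, and nothing in your sketch establishes that. The actual proof runs in the opposite order: one extends $\gamma'$ to a vector field tangent to $L$, extends that to a smooth foliated vector field near $L$, and replaces it by its \emph{linearization} along $L$; the flow of a linearized vector field is fiberwise linear \emph{by construction} and still preserves the foliation, and only then does one invoke transnormality/equifocality to conclude that these linear maps preserve norms, hence are isometries. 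In other words, linearity is structural (a property of linearized flows), not a consequence of the metric data, and your attempt to derive it metrically is where the proof breaks down.
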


We denote by $\mathrm{O}(\Sp^\perp_p,\fol^p)$ the group of \emph{foliated isometries}  of the infinitesimal foliation, i.e. all the isometries which preserve the foliation. Thus from Theorem~\ref{t: Sliding along a singular leaf} given a loop $\gamma$ at $p$, we have a foliated linear isometry $G_\gamma\colon \nu_p L\to \nu_p L$ by setting $G_\gamma(v) = G(1,v)$. Therefore, we have a group homeomorphism $\rho\colon\Omega(L,p)\to \mathrm{O}(\Sp^\perp_p,\fol^p)$ from the loop space of $L_p$ at $p$ to the foliated isometries of the infinitesimal foliation by setting $\rho(\gamma) = G_\gamma$.  We note that an isometry in $\mathrm{O}(\Sp^\perp_p,\fol^p)$ may map a leaf to a different leaf. By $\mathrm{O}(\fol^p)$ we denote the foliated isometries which leave the foliation invariant, i.e. the isometries $f\in \mathrm{O}(\Sp^\perp_p,\fol^p)$ such that for any leaf $\mathcal{L}$ of $(\Sp^\perp_p,\fol^p)$s we have $f(\mathcal{L})\subset \mathcal{L}$. There is a natural action of $\mathrm{O}(\Sp^\perp_p,\fol^p)$ on the quotient $\Sp^\perp_p/\fol^p$. The kernel of this action is $\mathrm{O}(\fol^p)$. 

We now show that if two  loops, $\gamma_1$ and $\gamma_2$,  are homotopic, then $G_{\gamma_1}^{-1}\circ G_{\gamma_2}$ is in the kernel of the action of $\mathrm{O}(\Sp^\perp_p,\fol^p)$ on  $\Sp^\perp_p/\fol^p$. Therefore we obtain a group morphism from $\pi_1(L,p)$ to $\mathrm{O}(\Sp^\perp_p,\fol^p)$.

\begin{lem}\label{p: lin lifts respect homotopy}
Let $\gamma_0$ and $\gamma_1$ be two curves in a closed leaf $L$ which are homotopic relative to the end points, with $\gamma_0(0) = p =\gamma_1(0)$, and $\gamma_0(1)=q=\gamma_1(1)$. Then $(G_1)^{-1}\circ G_0\colon \nu_p L\to \nu_q L$ is homotopic to the identity map. Furthermore this map	 takes every leaf of the infinitesimal foliation $\fol^p$ to itself.
\end{lem}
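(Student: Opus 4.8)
The plan is to realise $(G_1)^{-1}\circ G_0$ as the endpoint of a continuous path of foliated isometries starting at the identity, obtained by feeding the whole homotopy into the sliding theorem. After a smoothing, fix a piecewise smooth homotopy $H\colon [0,1]\times[0,1]\to L$ with $H(\cdot,0)=\gamma_0$, $H(\cdot,1)=\gamma_1$, and $H(0,u)=p$, $H(1,u)=q$ for all $u$, and write $\gamma_u=H(\cdot,u)$. Each $\gamma_u$ is a piecewise smooth curve in the closed leaf $L$ from $p$ to $q$, so Theorem~\ref{t: Sliding along a singular leaf} produces a foliated linear isometry $G_u\colon \nu_p L\to\nu_q L$, with $G_0$ and $G_1$ the maps in the statement. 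I then study the path $u\mapsto (G_u)^{-1}\circ G_0$, a family of linear isometries $\nu_p L\to\nu_p L$ which equals $\Id$ at $u=0$ and $(G_1)^{-1}\circ G_0$ at $u=1$.

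The main point, and the principal obstacle, is to show that $u\mapsto G_u$ is continuous. I would deduce this from the construction underlying Theorem~\ref{t: Sliding along a singular leaf}: the sliding map is built locally and is a linear-holonomy–type transport along the curve, so it depends continuously on the curve in the $C^1$-topology. Concretely, cover $L$ by finitely many distinguished neighbourhoods in which the normal bundle $\nu L$ and its foliation trivialise and the sliding along a short arc is given by Molino's local projections, hence by a formula that is smooth in the endpoints of the arc. Fixing $u_0$ and using compactness of $[0,1]$, for $u$ near $u_0$ the curves $\gamma_u$ admit a common subdivision with each subarc contained in one such neighbourhood, so $G_u$ is a composition of finitely many local maps each depending continuously on $u$, whence $G_u$ is continuous near $u_0$. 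As $u_0$ was arbitrary, $u\mapsto G_u$ is continuous on $[0,1]$, and therefore $u\mapsto (G_u)^{-1}\circ G_0$ is a continuous path in the group $O(\Sp^\perp_p,\fol^p)$ joining $\Id$ to $(G_1)^{-1}\circ G_0$; in particular $(G_1)^{-1}\circ G_0$ is homotopic to the identity, which is the first assertion.

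It remains to show that each $(G_u)^{-1}\circ G_0$ preserves every leaf of $\fol^p$, which gives the second assertion at $u=1$. Fix $v\in\nu_p L$ and $s>0$ small. Applying property~\ref{t: Sliding along a singular leaf c} of Theorem~\ref{t: Sliding along a singular leaf} to $\gamma_0$ and to $\gamma_u$, both $\exp_q(sG_0(v))$ and $\exp_q(sG_u(v))$ lie in the same leaf $\ell$ of $\fol$ as $\exp_p(sv)$, hence in the same leaf $\ell$ as each other; since $|sG_{u''}(v)|=s|v|$ is independent of $u''$, all these points lie in the slice $S_q$. By continuity of $u\mapsto G_u$ the map $u''\mapsto \exp_q(sG_{u''}(v))$, $u''\in[0,u]$, is a path inside $\ell\cap S_q$, so $\exp_q(sG_0(v))$ and $\exp_q(sG_u(v))$ lie in the same connected component of $\ell\cap S_q$, that is, in the same leaf of $\fol|_{S_q}$. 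Pulling back by $\exp_q$ and using that $\fol^q$ is invariant under the homothety $x\mapsto x/s$, the vectors $G_0(v)$ and $G_u(v)$ lie in the same leaf of the infinitesimal foliation $\fol^q$ at $q$. As $G_u$ carries leaves of $\fol^p$ to leaves of $\fol^q$, applying $(G_u)^{-1}$ shows that $((G_u)^{-1}\circ G_0)(v)$ and $v$ lie in the same leaf of $\fol^p$. Hence $(G_u)^{-1}\circ G_0\in O(\fol^p)$ for every $u$, and in particular $(G_1)^{-1}\circ G_0$ takes every leaf of $\fol^p$ to itself. The delicate step throughout is the continuity of the sliding family, since it is precisely what upgrades ``same leaf of $\fol$'' to ``same leaf of the infinitesimal foliation'' and at the same time supplies the homotopy to the identity.
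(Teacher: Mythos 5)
Your proposal is correct and takes essentially the same route as the paper: smooth the homotopy, produce a continuous one-parameter family of sliding maps $G_u$ from Theorem~\ref{t: Sliding along a singular leaf}, use the path $u\mapsto (G_u)^{-1}\circ G_0$ to get the homotopy to the identity, and combine property~\eqref{t: Sliding along a singular leaf c} with a connectedness argument to get leaf preservation. The only cosmetic differences are that the paper obtains continuity of the family by extending the velocity fields of the curves $\gamma_u$ (citing Radeschi's notes) rather than by local trivializations, and it runs the connectedness argument directly in $\nu_p L$ along $s\mapsto (G_s)^{-1}(G_0(v))$ instead of working in $\fol^q$ and pulling back by $(G_u)^{-1}$.
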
 

\begin{proof}
Let $H\colon [0,1]\times I\to L$ be the homotopy between $\gamma_0$ and $\gamma_1$. By applying Whitney's Approximation Theorem (see for example  \cite[Theorem~9.27]{Lee}), we can assume that $H$ is a smooth map. For $s\in I$ fixed we consider the smooth curve $\gamma_s(t) = H(t,s)$. From the compactness of $[0,1]\times I$ we can find a partition $0=t_0<t_1<\cdots t_{N}=1$ of $[0,1]$ such that for any $s\in I$ the curve $\gamma_s$ restricted to $[t_{i-1},t_i]$ is an embedding.  By extending  the vector field $\gamma_s'(t)$ for $t\in[t_{i-1},t_i]$ to $L$, we obtain  smooth vector fields $(V_s)_i$ on $L$. Since the family of curves $\gamma_s$ varies continuously with respect to $s$ by construction, for each $1\leqslant i \leqslant N$ the family of vector fields $(V_s)_i$ varies smoothly with respect to $s$. This implies that, when we consider for each $\gamma_s$, the map $G_s\colon \nu_p L\to \nu_q L$ given by Theorem~\ref{t: Sliding along a singular leaf}, then $G_s$ varies continuously with respect to $s$ (see \cite{Radeschi2016}). Defining  $K(v,s) = (G_s)^{-1}(G_0(v))$ we obtain a homotopy $K\colon \nu_p L\times I \to \nu_p L$ between the identity $Id\colon \nu_p L\to \nu_p L$ and $(G_1)^{-1}\circ G_0\colon \nu_p L\to \nu_p L$. For $v\in \nu_p L$ fixed, we have, from Theorem~\ref{t: Sliding along a singular leaf}~\eqref{t: Sliding along a singular leaf c}, that $\exp_p((G_s)^{-1}( G_0 (v) ))$ lies in the same leaf of $\fol$ as $\exp_p(v)$. Since $K(v,s)$ defines a path between $v$ and $(G_1)^{-1}(G_0(v))$, we have that $(G_1)^{-1}( G_0(v))$ lies in the same leaf $\mathcal{L}_v$ of $\fol^p$ as $v$. Thus $(G_1)^{-1}(G_0(\mathcal{L}_v))\subset \mathcal{L}_v$.
\end{proof}

\begin{prop}\label{p: representation of pi_1 as holonomy}
Let $(M,\fol)$ be a singular Riemannian foliation, $L$ a closed leaf of the foliation and $p\in L$. There is a well defined group morphism,
\begin{linenomath}
\[
	\rho\colon \pi_1(L,p)\to \mathrm{O}(\Sp^\perp_p,\fol^p)/\mathrm{O}(\fol^p),
\]
\end{linenomath}
given by $\rho[\gamma] = [G_\gamma]$.
\end{prop}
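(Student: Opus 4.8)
The plan is to verify three points in succession: that $[G_\gamma]$ is a well-defined element of the target group for each loop $\gamma$, that it depends only on the based homotopy class of $\gamma$, and that concatenation of loops is sent to composition.

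First I would check that $\rho$ even lands in $O(\Sp^\perp_p,\fol^p)/O(\fol^p)$. Applying Theorem~\ref{t: Sliding along a singular leaf} to a loop $\gamma$ at $p$, the endpoint map $G_\gamma\colon \nu_p L\to\nu_p L$ is a linear isometry which, by part~\eqref{t: Sliding along a singular leaf b}, preserves the leaves of the normal foliation $\nu L$; restricting to the unit sphere $\Sp^\perp_p$ this says precisely that $G_\gamma\in O(\Sp^\perp_p,\fol^p)$, so its class $[G_\gamma]$ is meaningful. Since the statement of the theorem only asserts the existence of a transport map (which a priori depends on the auxiliary vector fields extending the velocity field), I would also note that any two such maps associated to the same curve differ by an element of $O(\fol^p)$: this is the argument of Lemma~\ref{p: lin lifts respect homotopy} applied to the constant homotopy, where one varies the extending vector fields in a continuous family and produces a path from $\mathrm{Id}$ to the comparison map through leaf-preserving isometries. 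Hence $[G_\gamma]$ depends only on $\gamma$.

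Second, homotopy invariance is exactly the content of Lemma~\ref{p: lin lifts respect homotopy}. If $\gamma_0$ and $\gamma_1$ are loops at $p$ that are homotopic relative to the endpoints, the lemma gives that $(G_{\gamma_1})^{-1}\circ G_{\gamma_0}$ carries every leaf of $\fol^p$ into itself, i.e.\ it lies in $O(\fol^p)$, which is the kernel of the action of $O(\Sp^\perp_p,\fol^p)$ on $\Sp^\perp_p/\fol^p$. Thus $[G_{\gamma_0}]=[G_{\gamma_1}]$ and $\rho$ descends to $\pi_1(L,p)$.

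Third, for the multiplicativity I would use that the transport of Theorem~\ref{t: Sliding along a singular leaf} behaves like a parallel transport: for a concatenation $\gamma_1*\gamma_2$ (traversing $\gamma_1$ first) the extending vector fields may be chosen to restrict on the two subintervals to those used for $\gamma_1$ and $\gamma_2$, so that $G_{\gamma_1*\gamma_2}=G_{\gamma_2}\circ G_{\gamma_1}$, while a reparametrization does not change the endpoint map. Passing to the quotient gives $\rho[\gamma_1*\gamma_2]=\rho[\gamma_2]\,\rho[\gamma_1]$; with the standard convention (equivalently, composing with the inversion $\gamma\mapsto\gamma^{-1}$, or declaring the group operation on $\pi_1(L,p)$ in the corresponding order) this is precisely the homomorphism property. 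I expect this last step to be the main obstacle, since the quoted theorem is stated for a single curve and the naturality under concatenation has to be extracted from the construction of the transport map; the subdivision-and-flow technique from the proof of Lemma~\ref{p: lin lifts respect homotopy} is what I would lean on to make it rigorous, while keeping careful track of the anti-homomorphism-versus-homomorphism convention.
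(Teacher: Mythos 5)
Your proposal is correct, and its core step is exactly the paper's proof: the paper defines $\rho[\gamma_0]=[G_0]$ via Theorem~\ref{t: Sliding along a singular leaf} and then invokes Lemma~\ref{p: lin lifts respect homotopy} to conclude that homotopic loops satisfy $(G_1)^{-1}\circ G_0\in O(\fol^p)$, hence give the same class in $O(\Sp^\perp_p,\fol^p)/O(\fol^p)$ --- this is your second step verbatim. The difference is one of completeness rather than of route, and your additions are genuine improvements. First, the paper is silent on the fact that the transport map of Theorem~\ref{t: Sliding along a singular leaf} is not canonical; your constant-homotopy application of the lemma is actually needed here, because properties (i)--(iii) of that theorem by themselves only force the endpoint map to carry each leaf of $\fol^p$ into the trace of the same leaf of $\fol$ on the slice, which could a priori be a \emph{different connected component}, i.e.\ a different leaf of $\fol^p$; it is the continuity-in-$s$ argument inside the lemma's proof that rules this out, so appealing to the lemma (rather than to the theorem's statement) is the right move. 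Second, the paper never verifies multiplicativity at all --- its proof establishes only that $\rho$ is a well-defined map on $\pi_1(L,p)$, not that it is a group morphism --- whereas your third step supplies this, correctly observing that transports compose along concatenations as $G_{\gamma_1*\gamma_2}=G_{\gamma_2}\circ G_{\gamma_1}$ (by splicing the defining data on the two subintervals, which is admissible since the theorem allows piecewise smooth curves), and correctly flagging the resulting anti-homomorphism versus homomorphism convention, which is harmless after composing with inversion or reversing the concatenation order. In short, your proof is the paper's argument plus the two checks the paper leaves implicit.
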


\begin{proof}
Given a loop $\gamma_0$, we consider the linear foliated  transformation,\linebreak $G_0\colon \nu_p L\to \nu_p L$,  given by Theorem~\ref{t: Sliding along a singular leaf}, and set 
\[
\rho[\gamma_0] = [G_0]\in \mathrm{O}(\Sp^\perp_p,\fol^p)/ \mathrm{O}(\fol^p).
\]
From Proposition~\ref{p: lin lifts respect homotopy}, if $\gamma_1$ is a loop homotopic to $\gamma_0$, then we have $(G_1)^{-1}\circ G_0 \in \mathrm{O}(\fol^p)$. Therefore $[G_0]=[G_1]$ in $\mathrm{O}(\Sp^\perp_p,\fol^p)/ \mathrm{O}(\fol^p)$.
\end{proof}

For a closed leaf $L$ of a singular Riemannian foliation $(M,\fol)$ we define the \emph{holonomy of the leaf} $L$ as the image $\Gamma_L< \mathrm{O}(\Sp^\perp_p,\fol^p)/\mathrm{O}(\fol^p)$ of $\pi_1(L,p)$ under the morphism $\rho$. When we consider the holonomy of a leaf $L_p$ trough a point $p\in M$, we  denote it by $\Gamma_p$. A regular leaf $L$ is called  a \emph{principal leaf} if the holonomy group is trivial, and \emph{exceptional} otherwise.

Given a fixed point $p\in M$  and a vector $v\in \Sp^{\bot}_p$, set $q= \exp_p(\varepsilon v)$. If $\varepsilon$ is small enough, then $L_q$ is contained a tubular neighborhood of $L_p$, and thus there is a well defined smooth closest-point projection $\proj\colon L_q\to L_p$ which is a submersion, by  \cite[Lemma~6.1]{Molino}. The connected component of the fiber of the map $\proj$ through $q$ can be identified with the leaf $\mathcal{L}_v\in \fol^p$ through $v$. Taking  $\overline{L}_p = \widetilde{L}_p / \proj_\ast (\pi_1(L_q))$, the quotient of the universal cover $\widetilde{L}_p$ of $L_p$, we have a finite cover $\overline{L}_p\to L_p$ such that $\proj\colon L_q\to L_p$ lifts to a fibration
\begin{equation}
	\mathcal{L}_v\to L_q \overset{\xi}{\to} \overline{L}_p.\label{EQ: Leaf Fibration}
\end{equation}

Clearly fibration \eqref{EQ: Leaf Fibration} is a surjective map by construction. The following proposition gives another way of obtaining the covering $\overline{L}$ of $L$, via a subgroup $H$ of the holonomy group $\Gamma_p$.

\begin{prop}\label{R: holonomy via the space of directions}
For $v\in \Sp^{\bot}_p$ with image $v^\ast\in \Sp_{p^\ast}$, set $H$ to be the subgroup of $\Gamma_p$ fixing $v^\ast$. Then, taking $q=\exp_p(v)$, the  finite cover $\overline{L}_p$ of $L_p$ in the fibration $\xi\colon L_q\to \overline{L}_p$  is $\widetilde{L}_p / H$. 
\end{prop}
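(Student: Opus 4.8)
The plan is to unwind the definition of $\overline{L}_p$ and to prove the subgroup equality $\proj_\ast(\pi_1(L_q,q))=\rho^{-1}(H)$ inside $\pi_1(L_p,p)$, where $H=\mathrm{Stab}_{\Gamma_p}(v^\ast)$ and $\rho$ is the holonomy morphism of Proposition~\ref{p: representation of pi_1 as holonomy}. By construction $\overline{L}_p=\widetilde{L}_p/\proj_\ast(\pi_1(L_q))$, so $\pi_1(\overline{L}_p)=\proj_\ast(\pi_1(L_q,q))$; moreover $\ker\rho\leqslant\rho^{-1}(H)$ automatically, and since $\ker\rho$ is fixed, the cover $\widetilde{L}_p/\rho^{-1}(H)$ is exactly what is denoted $\widetilde{L}_p/H$. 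Hence the proposition is equivalent to the displayed subgroup equality.

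First I would describe the fiber $F=\proj^{-1}(p)\subseteq L_q$ through the factorization \eqref{EQ: Leaf Fibration}. Writing $c\colon\overline{L}_p\to L_p$ for the finite cover and using $\proj=c\circ\xi$, we get $F=\bigsqcup_{\bar p_i\in c^{-1}(p)}\xi^{-1}(\bar p_i)$; since the fiber of $\xi$ is the connected leaf $\mathcal{L}_v$, the connected components of $F$ are exactly the sets $\xi^{-1}(\bar p_i)$, so they are in bijection with the fiber $c^{-1}(p)$. On the level of the slice, each component is $\exp_p(\mathcal{L}')$ for a leaf $\mathcal{L}'$ of the infinitesimal foliation $\fol^p$ with $\exp_p(\mathcal{L}')\subseteq L_q$, and the component of $q$ corresponds to $\mathcal{L}_v$. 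Because $\xi$ has connected fiber, $\xi_\ast$ is surjective, whence $\proj_\ast\pi_1(L_q,q)=c_\ast\pi_1(\overline{L}_p,\bar q)$ with $\bar q=\xi(q)$; by covering space theory the right-hand side consists precisely of those classes $[\gamma]$ whose $c$-lift starting at $\bar q$ is a loop.

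The crux is to identify the monodromy of $c$ with the holonomy action on directions. Given a loop $\gamma$ at $p$ in $L_p$, I would use the sliding map $G$ of Theorem~\ref{t: Sliding along a singular leaf} (with $q=\exp_p(v)$, normalized so that $G(0,\cdot)=\mathrm{Id}$) to define $\tilde\gamma(t)=\exp_{\gamma(t)}\!\big(G(t,v)\big)$. Property~\eqref{t: Sliding along a singular leaf c} guarantees that $\tilde\gamma$ lies in the leaf $L_q$ and starts at $q$, while $G(t,v)\in\nu_{\gamma(t)}L_p$ has the small norm $|v|$, so $\proj(\tilde\gamma(t))=\gamma(t)$; thus $\xi\circ\tilde\gamma$ is the $c$-lift of $\gamma$ starting at $\bar q$. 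Its endpoint is $\exp_p(G_\gamma(v))\in\exp_p(\mathcal{L}_{G_\gamma(v)})$, so this lift closes up if and only if $\mathcal{L}_{G_\gamma(v)}=\mathcal{L}_v$, that is, if and only if $\rho[\gamma]=[G_\gamma]$ fixes the direction $v^\ast\in\Sp_{p^\ast}$, i.e. $\rho[\gamma]\in H$. Combined with the previous paragraph this yields $[\gamma]\in\proj_\ast\pi_1(L_q,q)\iff\rho[\gamma]\in H$, hence $\proj_\ast(\pi_1(L_q))=\rho^{-1}(H)$ and $\overline{L}_p=\widetilde{L}_p/H$.

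I expect the main obstacle to be exactly this last identification: producing the explicit lift $\tilde\gamma$ from the sliding map and checking its compatibility with both $\proj$ and $\xi$, together with the careful passage from ``$[\gamma]$ lies in the image of $\proj_\ast$'' to ``the $c$-lift closes up'' \emph{through the intermediate finite cover $\overline{L}_p$} rather than through the submersion $\proj$ directly, for which loop lifting is not governed by covering theory. The remaining points—that the components of $F$ are the sets $\exp_p(\mathcal{L}')$ and that $G_\gamma$ preserves $\mathcal{L}_v$ precisely when it fixes $v^\ast$—follow from the definition of the infinitesimal foliation and of the $\Gamma_p$-action on $\Sp_{p^\ast}=\Sp^\perp_p/\fol^p$.
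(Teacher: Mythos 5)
Your proposal is correct, and its mathematical core coincides with the paper's proof: both arguments hinge on using the sliding map of Theorem~\ref{t: Sliding along a singular leaf} to lift a loop $\gamma$ in $L_p$ to the path $\tilde\gamma(t)=\exp_{\gamma(t)}(G(t,v))$ in $L_q$, and on the observation that this lift ``closes up'' exactly when $G_\gamma$ preserves the infinitesimal leaf $\mathcal{L}_v$, i.e.\ when $\rho[\gamma]$ stabilizes $v^\ast$. Where you genuinely differ is the bookkeeping used to characterize $\proj_\ast\pi_1(L_q,q)$. The paper works with the fibration $\proj$ itself, whose fiber $F=\proj^{-1}(p)$ is disconnected, identifies $\proj_\ast\pi_1(L_q,q)$ with $\kernel(\partial)$ for the boundary map $\partial\colon\pi_1(L_p,p)\to\pi_0(F,q)$ in the long exact sequence, and therefore has to recall the construction of $\partial$ (following Hatcher) and argue that $\partial$ is independent of the choice of lift; it then proves the two inclusions $\proj_\ast\pi_1(L_q,q)\subset H$ and $H\subset\proj_\ast\pi_1(L_q,q)$ separately. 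You instead factor $\proj=c\circ\xi$ through the finite cover $c\colon\overline{L}_p\to L_p$, use surjectivity of $\xi_\ast$ (connected fiber $\mathcal{L}_v$) to get $\proj_\ast\pi_1(L_q,q)=c_\ast\pi_1(\overline{L}_p,\bar q)$, and then apply the classical loop-lifting criterion for coverings. This buys two small simplifications: uniqueness of lifts for covering maps replaces the ``$\partial$ does not depend on the lift'' discussion, and the lifting criterion yields both inclusions at once as an equivalence. You also make explicit the abuse of notation in the statement (the group acting on $\widetilde{L}_p$ is really $\rho^{-1}(H)\leqslant\pi_1(L_p,p)$, not $H\leqslant\Gamma_p$), which the paper leaves implicit.
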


\begin{proof}
Let $F= \proj^{-1}(p)$ be the fiber of the metric projection $\proj\colon L_q\to L_p$, which may consist of several connected components. The end of the long exact sequence of the fibration looks like
\begin{linenomath}
\[
	\cdots\to \pi_1(F,q)\to \pi_1(L_q,q)\overset{\proj_\ast}{\longrightarrow}\pi_1(L_p,p)\overset{\partial}{\to}\pi_0(F,q)\to 0.
\]
\end{linenomath}
From  exactness, we conclude that $(\proj_\ast)(\pi_1(L_q,q) ) = \kernel (\partial)$. We recall how the map $\partial\colon \pi_1(L_q,q)\to \pi_0(F,q)$ is defined, following a modification of the definitions presented in Hatcher (see Sections~4.1 and~4.2 in \cite{Hatcher}). Let $\lambda_1\colon \D^1\to\Sp^1$ be the map collapsing $\partial\D^1$ to a point. Let $\delta_0\colon \Sp^0\to \D^1$ be the inclusion as the boundary. Consider a loop $\varphi\colon \Sp^1\to L_p$ with base point $p$. By the homotopy lifting property there is a lift $\lambda\colon \D^1\to L_q$ for the map $\varphi\circ\lambda_1\colon \D^1\to L_p$, with $\lambda(0)=q$. Furthermore, by definition, we have that $\varphi\circ\lambda_1\circ\delta_0=\proj\circ\lambda\circ\delta_0$ is constant. Therefore the image of the map $\psi=\lambda\circ\delta_0$ is contained in $F$. Thus we have a map $\psi\colon \Sp^0\to F$. We define $\partial [\varphi]=[\psi]$. Let $\alpha\colon \Sp^1\to L_p$ be a loop in $\proj_\ast(\pi_1(L_q,q) )=\kernel \partial$. Consider $G_t = G\colon\{t\}\times \nu_p L_p\to \nu L_p$, the transformation given by Theorem~\ref{t: Sliding along a singular leaf} corresponding to $\alpha$. Then $\tilde{\alpha}(t) =\exp_{\alpha(t)} (G_t(v))$ is a lift of $\alpha$ in $L_q$. Since $\partial$ does not depend on the choice of a lift we have that $0=\partial[\alpha]=[\tilde{\alpha}]$. It follows that the end point of $\tilde{\alpha}\colon [0,1]\to F$ is in the same connected component of $F$ as $q$. Therefore we have that $G_1\colon \nu_p L_p\to \nu_p L_p$ fixes the infinitesimal leaf $\mathcal{L}_v$ in $\Sp^\bot_p/\fol_p$. Thus $\proj_\ast(\pi_1(L_q,q) )\subset H$. Conversely, if we start with $[\alpha]\in H$, then for the map $G\colon [0,1]\times \nu_p L_p\to \nu L_p$ given by Theorem~\ref{t: Sliding along a singular leaf}, we have that $G_1$ maps the infinitesimal leaf $\mathcal{L}_v$ to itself. By definition this means that $\exp_p(G_1(v))$ is in the same connected component of $F$ as $q=\exp_p(v)$. Theorem~\ref{t: Sliding along a singular leaf}~\eqref{t: Sliding along a singular leaf c} implies that the path $\exp_p(G_t(v) )$ is a path between $q$ and $\exp_p(G_1(v))$ in $F$. Thus we have that $\partial[\alpha]=0$. Therefore we conclude that $\proj_\ast(\pi_1(L_q,q) ) = H$.
\end{proof}

Proposition~\ref{R: holonomy via the space of directions} gives a  way to detect if there is  holonomy for a  closed leaf of $(M,\fol)$. This proposition extends \cite[Remark~2.3]{Galaz-Garcia2015} to general singular Riemannian fibrations.  

If the map $\xi\colon L_q\to \overline{L}_p$ is proper then, it follows from  Ehresmann's fibration Lemma in \cite{Ehresmann} that $\xi$ is a locally trivial fibration. In the particular case when $L_q$ is compact, then the map $\xi$ is proper. Thus for foliations with closed leaves the fibration \eqref{EQ: Leaf Fibration} is a fiber bundle.

\begin{remark}
We note that from Ehresmann's Lemma the fiber bundle given by the projection map $\xi$, may not have as structure group a Lie group, but rather a very large topological group, namely the diffeomorphism group of the fiber, $\Diff (\mathcal{L}_v)$. Although $\Diff (\mathcal{L}_v)$ is in general not a Lie group, it is  a Frobenious group, i.e. the group operations are smooth with respect to a Frobenious atlas (see \cite{GuijarroWalschap2007}).
\end{remark}

If the foliation $\fol$ is given by the action of a compact Lie group $G$, then the  holonomy of an orbit $G(p)$ is given by $G_p/G_p^0$ (see \cite[Section~3.1]{Radeschi15}). For $q\in M$ close to $p$ with $G_q$ a subgroup of  $G_p$,  the fiber bundles given by \eqref{EQ: Leaf Fibration}
are of the form:
\begin{linenomath}
\[
	G^0_p/G_q \to G/G_q  \to G/G^0_p,
\]
\end{linenomath}
where $G^0_p$ is the connected component of the identity of the isotropy group $G_p$, and $G/G^0_p$ is a cover of the orbit $G/G_p$ (see \cite[Example~2.4]{Galaz-Garcia2015}).

%
%
%
%

\subsection{Principal Stratum}

We now study the topology of the principal stratum $M_{\prin}$ of a closed singular Riemannian foliation $(M,\fol)$ on a compact simply-connected Riemannian manifold $M$.

If $L$ is a  principal leaf of a singular Riemannian foliation $(M,\fol)$ on an $n$-dimen\-sional manifold with all regular leaves closed, then \cite[Theorem~2.5]{Moerdijk} and the fact that $\Gamma_p$ is trivial show that $L^\ast$ is  a regular  point of the orbifold $\Sigma_{\reg}^\ast$. With this we can easily see that $M_{\prin}^\ast$  corresponds to the manifold part of the orbifold $\Sigma_{\reg}^\ast$. Thus $M_{\prin}$ is open and dense in $\Sigma_{\reg}$. Since, in general for a singular Riemannian foliation, $\Sigma_{\reg}$ is open and  dense in $M$, then $M_{\prin}^\ast$ is open and dense in $M^\ast$. Furthermore, from the fact the manifold part of an orbifold is connected (see \cite[Lemma~2.3]{Lage2018}, \cite{Yeroshkin2014}, \cite{Faessler2011}) it follows that  the set $M_{\prin}^\ast$ is connected in $M^\ast$. Since it is locally Euclidean, it is path connected.

We collect these observations in the following proposition.

\begin{prop}\label{T: principal part is open dense connected}
Let $(M,\fol)$ be a singular Riemannian foliation  with closed regular leaves.  Then the principal stratum $M_{\prin}$ is open dense in $M$, and $M_{\prin}^\ast$ is connected and path connected.
\end{prop}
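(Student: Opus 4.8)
The plan is to identify the principal stratum with the preimage of the manifold part of an orbifold, and then to read off the desired topological properties from standard facts about orbifolds together with density statements already available in the excerpt. The skeleton of the argument is exactly the one sketched in the paragraph preceding the statement; the task is to make each implication precise.

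First I would recall that, by \cite[Proposition~3.7]{Molino}, the restriction $(\Sigma_{\reg},\fol)$ of the foliation to the regular stratum has leaf space $\Sigma_{\reg}^\ast$ carrying the structure of an orbifold, whose orbifold isotropy at a point $L^\ast$ is governed by the holonomy of the regular leaf $L$. The crucial step is to determine which leaves map to regular (manifold) points of this orbifold. For a regular leaf $L$ through $p$, the local model of $\Sigma_{\reg}^\ast$ near $L^\ast$ is the quotient of a normal slice by the action of the holonomy group $\Gamma_p$; this is where \cite[Theorem~2.5]{Moerdijk}, combined with the holonomy description obtained in Proposition~\ref{R: holonomy via the space of directions}, enters. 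Consequently $L^\ast$ is a regular point of the orbifold $\Sigma_{\reg}^\ast$ precisely when $\Gamma_p$ is trivial, i.e. precisely when $L$ is principal. This shows that $M_{\prin}^\ast$ coincides with the manifold part of $\Sigma_{\reg}^\ast$.

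Once this identification is in place, openness and density follow by transitivity. The manifold part of any orbifold is open and dense, so $M_{\prin}^\ast$ is open and dense in $\Sigma_{\reg}^\ast$, and pulling back along $\pi$ gives that $M_{\prin}$ is open and dense in $\Sigma_{\reg}$. Since $\Sigma_{\reg}$ is itself open and dense in $M$ by \cite[Lemma~2.2.2]{Radeschi2012}, composition yields that $M_{\prin}$ is open and dense in $M$, and correspondingly $M_{\prin}^\ast$ is open and dense in $M^\ast$. For the connectivity claim I would invoke that the manifold part of an orbifold is connected \cite[Lemma~2.3]{Lage2018} (see also \cite{Yeroshkin2014}, \cite{Faessler2011}); since $M_{\prin}^\ast$ is exactly this manifold part, it is connected, and being locally Euclidean and connected it is automatically path connected.

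The main obstacle is the identification in the second step: that the principal (trivial-holonomy) leaves correspond \emph{exactly} to the regular points of $\Sigma_{\reg}^\ast$. Everything afterwards is a bookkeeping application of standard orbifold properties together with the already-established density of $\Sigma_{\reg}$ in $M$. The subtlety is ensuring that the holonomy group $\Gamma_p$ \emph{equals} the local orbifold isotropy, rather than merely surjecting onto it or being contained in it; this is precisely where the equivalence between the covering $\overline{L}_p$ and the holonomy subgroup in Proposition~\ref{R: holonomy via the space of directions} must be combined with \cite[Theorem~2.5]{Moerdijk} to pin down the orbifold chart.
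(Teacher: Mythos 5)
Your proposal is correct and follows essentially the same route as the paper: identifying $M_{\prin}^\ast$ with the manifold part of the orbifold $\Sigma_{\reg}^\ast$ via \cite[Proposition~3.7]{Molino} and \cite[Theorem~2.5]{Moerdijk}, then deducing openness and density by transitivity through $\Sigma_{\reg}$, and connectedness (hence path connectedness, by local Euclidean-ness) from the fact that the manifold part of an orbifold is connected. Your explicit attention to the equality between the holonomy group and the local orbifold isotropy is a point the paper passes over with ``we can easily see,'' so your write-up is, if anything, slightly more careful on the one step that needs justification.
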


Consider  principal leaves,  $L_0$ and $L$, in a closed singular Riemannian foliation $(M,\fol)$ on a compact, simply-connected manifold $M$. Furthermore assume that $M_{\prin}^\ast$ is simply connected. Take any path $\gamma\colon I \to M_{\prin}^\ast$, with $\gamma(0) = L_0^\ast$ and $\gamma(1) = L^\ast$. Recall that $M_{\prin}$ is a  subset of the regular stratum $\Sigma_{\reg}$. For a fixed point $x\in L_0$, consider the unique smooth horizontal lift $\gamma^x\colon I \to M_{\prin}$ of $\gamma$ through $x$ (see \cite[Proposition~1.3.1]{Gromoll}). We define a homeomorphism $h_\gamma\colon L_0\to L$ by setting $h_\gamma(x) = \gamma^x(1)$. 

\begin{cor}\label{C: homeos between principal leafs are homotopic}
Consider a singular Riemannian foliation $(M,\fol)$ with clo\-sed leaves on a compact simply-connected Riemannian manifold. Assume that the principal stratum $M_{\prin}^\ast$ is simply connected. Fix  principal leaves $L_0$ and $L$ of $\fol$, and consider two paths $\gamma_0\colon I\to M_{\prin}^\ast$ and $\gamma_1\colon I\to M_{\prin}^\ast$, connecting $L_0^\ast$ and $L^\ast$. Then the homeomorphism $h_{\gamma_0}$ is homotopic to $h_{\gamma_1}$.
\end{cor}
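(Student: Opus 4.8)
The plan is to promote the simple-connectivity of the principal part of the leaf space into a homotopy between the two defining paths, and then to lift that homotopy horizontally into a homotopy between $h_{\gamma_0}$ and $h_{\gamma_1}$.

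First I would produce the homotopy downstairs. By Proposition~\ref{T: principal part is open dense connected}, $M_{\prin}^\ast$ is a connected (hence path-connected) manifold, namely the manifold part of the orbifold $\Sigma_{\reg}^\ast$, and by hypothesis it is simply-connected. Therefore the two paths $\gamma_0,\gamma_1$, which share the endpoints $L_0^\ast$ and $L^\ast$, are homotopic relative to their endpoints inside $M_{\prin}^\ast$. Fix such a homotopy $H\colon I\times I\to M_{\prin}^\ast$ with $H(\cdot,0)=\gamma_0$, $H(\cdot,1)=\gamma_1$, $H(0,s)=L_0^\ast$ and $H(1,s)=L^\ast$ for all $s$. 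As in the proof of Lemma~\ref{p: lin lifts respect homotopy}, by Whitney's Approximation Theorem (see \cite[Theorem~9.27]{Lee}) I may assume $H$ is smooth, keeping it unchanged on $\partial(I\times I)$, where it is already (piecewise) smooth; in particular the edges $\gamma_0,\gamma_1$ and the constant paths at $L_0^\ast, L^\ast$ are preserved. Write $\gamma_s=H(\cdot,s)$, so each $\gamma_s$ is a smooth path in $M_{\prin}^\ast$ from $L_0^\ast$ to $L^\ast$ and $h_{\gamma_s}\colon L_0\to L$ is defined by horizontal lifting as in the statement.

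Next I would assemble the candidate homotopy. Define $F\colon L_0\times I\to M$ by $F(x,s)=\gamma_s^x(1)$, where $\gamma_s^x\colon I\to\Sigma_{\reg}$ is the horizontal lift of $\gamma_s$ through $x$ furnished by \cite[Proposition~1.3.1]{Gromoll}; such lifts are defined on all of $I$ because the leaves are closed in the compact manifold $M$, hence compact, so the normal distribution $\nu\fol$ of the regular foliation behaves as a complete connection over $M_{\prin}^\ast$. Since $\pi\circ\gamma_s^x=\gamma_s$ stays in $M_{\prin}^\ast$, each lift $\gamma_s^x$ stays in $M_{\prin}$, and its endpoint projects to $\gamma_s(1)=L^\ast$, whence $F(x,s)\in L$ for every $(x,s)$. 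By construction $F(\cdot,0)=h_{\gamma_0}$ and $F(\cdot,1)=h_{\gamma_1}$, so $F$ is a homotopy $h_{\gamma_0}\simeq h_{\gamma_1}$ of maps $L_0\to L$, provided it is continuous.

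The crux is exactly this continuity, i.e. the continuous dependence of the horizontal lift $\gamma_s^x$ jointly on the initial point $x\in L_0$ and on the parameter $s$. This is the continuous dependence of solutions of the horizontal-lift equation on initial conditions and on the smoothly varying family $\gamma_s$ — precisely the type of continuity of horizontal transport invoked in Lemma~\ref{p: lin lifts respect homotopy} (cf.\ \cite{Radeschi2016}). I expect this to be the only substantive point: once it is granted, $F$ is continuous, lands in $L$, and interpolates between $h_{\gamma_0}$ and $h_{\gamma_1}$, finishing the proof. The remaining ingredients — existence and completeness of horizontal lifts over $M_{\prin}^\ast$ from compactness of the leaves, and the fact that a lift covering a path in $M_{\prin}^\ast$ cannot leave $M_{\prin}$ — are formal.
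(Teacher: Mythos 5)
Your proposal is correct and takes essentially the same route as the paper's own proof: use simple connectivity of $\Sigma_{\prin}^\ast$ to obtain a homotopy $H$ between $\gamma_0$ and $\gamma_1$ fixing the endpoints, set $\gamma_s = H(\cdot,s)$, and define the homotopy $L_0\times I\to L$ by $(x,s)\mapsto \gamma_s^x(1)$ via horizontal lifts. The only difference is that you spell out the points the paper leaves implicit (smoothing $H$, completeness of the lifts, and their joint continuity in $(x,s)$), which is a faithful filling-in rather than a new argument.
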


\begin{proof}
From the hypothesis that $M_{\prin}^\ast$ is simply connected it follows that there is a homotopy from $H\colon I\times I\to M_{\prin}^\ast$ from $\gamma_0$ to $\gamma_1$ fixing the end points $L_0^\ast$ and $L^\ast$. This defines a continuous family of curves $\gamma_s\colon I\to M_{\prin}^\ast$, by setting $\gamma_s(t) = H(t,s)$. We define a homotopy $\widetilde{H}\colon L_0\times I\to L$ by setting $\widetilde{H}(x,s)=\gamma_s^x(1)$.
\end{proof}

\begin{remark}\label{R: Trivial holonomy means trivial connected image of path space in orthogonal group}
Observe that in general if a leaf $L$ has trivial holonomy $\Gamma_L =\{\mathrm{Id}\}$ this does not mean that the map $\rho\colon \Omega(L,p)\to \mathrm{O}(\Sp_p^\perp,\fol^p)$ is trivial. It means that by Lemma~\ref{p: lin lifts respect homotopy} any element in $\rho( \Omega(L,p))\subset \mathrm{O}(\Sp_p^\perp,\fol^p)$ is homotopic to the identity map. 
\end{remark}

\subsection{Slice Theorem}

With the concepts of the infinitesimal foliation at a point $p$ and the representation of the loop space $\Omega(L,p)$ in $\mathrm{O}(\Sp^\perp_p,\fol^p)$ of a closed leaf in \cite{MendesRadeschi2019} the following theorem is proven which describes a tubular neighborhood of a closed leaf of a singular Riemannian foliation:

\begin{thm}[Slice Theorem, Theorem~A in \cite{MendesRadeschi2019}]\label{T: Slice Theorem Foliations}
Let $(M,\fol)$ be a singular Riemannian foliation, and
let $L$ be a closed leaf with infinitesimal foliation $(\nu_p L,\fol^p)$ at a point $p\in L$. Then there is a
group $K_p \subset \mathrm{O}(\nu_p L)$ of foliated isometries of $(\nu_p L,\fol^p)$ and a principal $K_p$-bundle $P$ over
$L$, such that for small enough $\varepsilon > 0$, the $\varepsilon$-tube $U$ around $L$ is foliated diffeomorphic
to $(P \times_{K_p} \D_p^\perp(\varepsilon), P\times_{K_p} \fol^p
)$.
\end{thm}

Here the group $K_p$ is given as: 
\[
K_p = \{G_\gamma\colon \nu_p L\to \nu_p L\mid \gamma \mbox{ piece-wise smooth loop at } p \}.
\]

Since the action $K_p$ on $\nu_pL$  by construction preserves the infinitesimal foliation $\fol^p$, then diagonal action of $K_p$ on $P\times \nu_p L$ via $G_\gamma(p,v) = (p\cdot G_{\gamma^{-1}}, G_\gamma(v))$ preserves the product foliation $\{P\times \mathcal{L}_v\mid v\in \nu_p L\}$. Denote by $\rho\colon P\times \nu_p L\to  (P\times \nu_p L)/K_p$ the quotient map. The leaves of the foliation $P\times_{K_p}\fol^p$ are given by the images of $\rho(P\times \mathcal{L}_v)$, and make a singular Riemannian foliation \cite[Proposition~1.11]{Radeschi-notes}.

\section{\texorpdfstring{$A$}{A}-foliations}\label{S: A-foliations}

Foliations by tori on compact  Riemannian manifolds were introduced in \cite{Galaz-Garcia2015} as generalizations of smooth effective torus actions. Namely an \emph{$A$-foliation} is a foliation where all the leaves are closed, 
and \emph{aspherical}, i.e. for $n>1$ the $n$-th homotopy group of the leaves is trivial. 
By \cite[Corollary~B]{Galaz-Garcia2015}  the principal leaves of an $A$-foliation on a compact, simply-connected, Riemannian manifold  are homeomorphic to tori.

%
%
%
%

\subsection{Homeomorphism type of the  leaves in an \texorpdfstring{$A$}{A}-foliation}

We recall that given two points $p,q\in M$, with $q$ sufficiently  close to $p$ with respect to the metric of $M$,  in the case when $L_q$ is a principal leaf, then there is a fibration:
\begin{linenomath}
\begin{equation}
\mathcal{L}\to L_q\to \overline{L}_p \tag{\ref{EQ: Leaf Fibration}},
\end{equation}
\end{linenomath}
where $\overline{L}_p = \tilde{L}_p / H$ is a finite cover of $L_p$, and $\mathcal{L}$ is a leaf in the infinitesimal foliation $\fol^p$ (see Proposition~\ref{R: holonomy via the space of directions}). Using this description we describe next the topology of the other leaf types in an $A$-foliation.

Recall from Section~\ref{SS: infinitesimal foliation}, that the infinitesimal foliation at $p\in M$, is obtained from the foliated slice $(S_p, \fol_{|S_p})$, where $S_p = \exp_p(\nu^\varepsilon_p L_p)$ for a sufficiently small $\varepsilon>0$. First we consider the case when the leaves of this foliation are connected. In this case the finite covering $\overline{L}_p$ is trivial, i.e. $\overline{L}_p = L_p$. Thus following the proof of \cite[Theorem~3.7]{Galaz-Garcia2015}, we prove the following result:

\begin{prop}\label{P: F->M->N F conn and M torus then N and F torus}
Let $F$, $M$ and $N$ be topological manifolds, with $F$ connected, and let $F\to M\to N$ be a fibration. If $M$ is homeomorphic to a torus, then $F$ and $N$ are tori.
\end{prop}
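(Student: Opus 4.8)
The plan is to run the long exact homotopy sequence of the fibration and reduce everything to an asphericity statement, which is the real content. First I would record that all three spaces are closed manifolds: $M$ is compact, $N=p(M)$ is compact as a continuous image, and the fibre $F=p^{-1}(b_0)$ is closed in $M$, hence compact; writing $n=\dim M$ one has $\dim F+\dim N=n$. Since $M$ is a torus it is a $K(\Z^n,1)$, so $\pi_k(M)=0$ for $k\geq 2$ and $\pi_1(M)=\Z^n$. Feeding this into
\[
\cdots\to\pi_k(F)\to\pi_k(M)\to\pi_k(N)\to\pi_{k-1}(F)\to\cdots
\]
and using that $F$ is connected ($\pi_0(F)=0$) yields three facts: $\pi_1(N)$ is a quotient of $\Z^n$, hence a finitely generated abelian group; $\pi_k(N)\cong\pi_{k-1}(F)$ for all $k\geq 3$; and there is an exact sequence $0\to\pi_2(N)\to\pi_1(F)\to\Z^n\to\pi_1(N)\to 0$, so that $\kernel(\pi_1(F)\to\pi_1(M))\cong\pi_2(N)$.

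The main obstacle is to show that $N$ (and then $F$) is aspherical; the homotopy sequence alone is circular, so I would bring in the manifold structure through a loop--space argument. Let $u\colon\R^n\to M$ be the universal cover. The composite $p\circ u\colon\R^n\to N$ is a fibration with contractible total space, so its fibre is homotopy equivalent to $\Omega N$; on the other hand this fibre is exactly $u^{-1}(F)$, which is a covering space of the closed manifold $F$. Hence every component of $\Omega N$ is homotopy equivalent to a connected $(\dim F)$-manifold covering $F$, and therefore has the homotopy type of a CW complex of dimension $\leq\dim F$; in particular its homology vanishes above degree $\dim F$ for every coefficient field. Passing to the universal cover $\widetilde N$ (whose based loop space realizes one such component), suppose $\widetilde N$ were not contractible. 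Then, being simply connected, it has nonzero reduced homology in some positive degree, so $\widetilde H^{\ast}(\widetilde N;\Bbbk)\neq 0$ for $\Bbbk=\mathbb{Q}$ or $\Bbbk=\Z/p$; the Serre spectral sequence of the path--loop fibration $\Omega\widetilde N\to P\widetilde N\to\widetilde N$ then forces $H^{\ast}(\Omega\widetilde N;\Bbbk)$ to be nonzero in infinitely many degrees, contradicting the dimension bound. Thus $\widetilde N$ is contractible and $N$ is aspherical.

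Once $N$ is aspherical, $\pi_k(N)=0$ for $k\geq 2$, and the isomorphisms $\pi_k(N)\cong\pi_{k-1}(F)$ give $\pi_j(F)=0$ for $j\geq 2$, so $F$ is aspherical as well; moreover $\pi_2(N)=0$ forces $\pi_1(F)\hookrightarrow\Z^n$, whence $\pi_1(F)$ is free abelian. It then remains to identify the fundamental groups. Both $N$ and $F$ are closed aspherical manifolds, so $\pi_1(N)$ and $\pi_1(F)$ are torsion-free Poincar\'e duality groups of cohomological dimension $\dim N$ and $\dim F$ respectively; a finitely generated torsion-free abelian group of cohomological dimension $k$ is $\Z^k$, so $\pi_1(N)\cong\Z^{\dim N}$ and $\pi_1(F)\cong\Z^{\dim F}$. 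Finally, a closed aspherical manifold with fundamental group $\Z^k$ is homeomorphic to $T^k$ by the topological rigidity (Borel conjecture) for free abelian groups, which holds in every dimension (see \cite{FarrellHsiang1983}). This gives $N\cong T^{\dim N}$ and $F\cong T^{\dim F}$, as claimed.
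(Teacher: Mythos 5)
Your proof is correct, and while its endgame coincides with the paper's, the key step is handled by a genuinely different, self-contained argument. The paper's own proof outsources asphericity: it quotes \cite[Theorem~3.7]{Galaz-Garcia2015} to conclude that $F$ and $N$ are aspherical, and then, exactly as you do, runs the long exact sequence to get $0\to\pi_1(F)\to\Z^n\to\pi_1(N)\to 0$, shows $\pi_1(F)$ is free abelian as a subgroup of $\Z^n$, shows $\pi_1(N)$ is torsion-free by the finite-cyclic-action-on-$\widetilde N$ / cohomological-dimension argument (the same fact you invoke via ``fundamental groups of closed aspherical manifolds are torsion-free PD groups''), and closes with the Borel conjecture for free abelian groups. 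You instead prove the asphericity of $N$ from scratch: identifying the fibre of $\R^n\to N$ both with $\Omega N$ (contractible total space) and with a covering space of $F$, hence a finite-dimensional manifold, and then using the Serre spectral sequence of the path--loop fibration to rule out a non-contractible $\widetilde N$. This is essentially the content hiding behind the cited theorem, and your route has the virtue of making visible exactly where the manifold hypothesis enters: for bare fibrations the statement is false (the path--loop fibration $\Omega S^2\to PS^2\to S^2$ has contractible total space, connected fibre, and non-aspherical base), so the finite-dimensionality of $F$ is indispensable. You are also more careful than the paper in matching the rank of $\pi_1$ with the dimension (via cohomological dimension of PD groups) before invoking rigidity. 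One small caveat: topological rigidity of tori does hold in every dimension, but \cite{FarrellHsiang1983} alone does not cover it --- that paper excludes dimensions $3$ and $4$; one needs Freedman--Quinn surgery for $\Z^4$ (a ``good'' group) in dimension $4$ and three-manifold topology (or \cite{KreckLuck2009}) in dimension $3$. The paper's proof is equally loose on this point, so it does not affect the comparison.
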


\begin{proof}
Since $M$ is aspherical, we have from \cite[Theorem~3.7]{Galaz-Garcia2015} that $F$ and $N$ are also aspherical. From the long exact sequence of the fibration we get:
\[
	0\to \pi_1(F)\to \pi_1(M) \to \pi_1(N) \to 0.
\]
Since $\pi_1(M)$ is an abelian, torsion-free, finitely-generated group, and $\pi_1(F)$ is a subgroup of  $\pi_1(M)$, then $\pi_1(F)$ is an abelian, torsion free, finitely generated group. Thus, by classification of finitely generated abelian groups and the Borel conjecture, $F$ is homeomorphic to a torus. Now assume that $\pi_1(N)$ has torsion. Then for some $k\in \Z$, the cyclic group $\Z_k$ acts freely on the contractible manifold $\widetilde{N}$. Therefore it follows that $\widetilde{N}/\Z_k$ is an Eilenberg-MacLane space $K(\Z_k,1)$. This contradicts the fact that $K(\Z_k,1)$ has infinite cohomological dimension. Thus $\pi_1(N)$ is an abelian, torsion-free, finitely generated group. Again by the classification of finitely generated abelian groups and the  Borel conjecture, $N$  is homeomorphic to a torus.
\end{proof}

\begin{cor}\label{P: Non holomony implies torus}
	In an $A$-foliation all leaves with trivial holonomy are homeomorphic to tori.
\end{cor}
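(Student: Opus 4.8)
The plan is to read off the corollary directly from the leaf fibration \eqref{EQ: Leaf Fibration} and feed it into Proposition~\ref{P: F->M->N F conn and M torus then N and F torus}. Fix a leaf $L_p$ with trivial holonomy, so that $\Gamma_p=\{e\}$. Working on a compact, simply-connected manifold, Proposition~\ref{T: principal part is open dense connected} tells me the principal stratum is open and dense, so I can pick a direction $v\in\Sp^\perp_p$ for which $q=\exp_p(\varepsilon v)$ (with $\varepsilon$ small) lies on a principal leaf $L_q$; by \cite[Corollary~B]{Galaz-Garcia2015} this $L_q$ is homeomorphic to a torus, and since the foliation has closed leaves, \eqref{EQ: Leaf Fibration} is a genuine fiber bundle.

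The first real step is to check that trivial holonomy collapses the finite cover, i.e.\ $\overline{L}_p=L_p$. By Proposition~\ref{R: holonomy via the space of directions} the deck group of $\widetilde{L}_p\to\overline{L}_p$ is $\proj_\ast(\pi_1(L_q))=H$, where $H\leqslant\pi_1(L_p)$ is the $\rho$-preimage of the stabilizer of $v^\ast$ in $\Gamma_p$. As $\Gamma_p$ is trivial, every class in $\pi_1(L_p)$ maps to the identity, which stabilizes $v^\ast$; hence $H=\pi_1(L_p)$ and $\overline{L}_p=\widetilde{L}_p/\pi_1(L_p)=L_p$. The fibration \eqref{EQ: Leaf Fibration} therefore becomes
\[
\mathcal{L}_v\longrightarrow L_q\longrightarrow L_p,
\]
with total space the torus $L_q$ and connected fiber $\mathcal{L}_v$, a single leaf of the infinitesimal foliation $\fol^p$.

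It then remains to apply Proposition~\ref{P: F->M->N F conn and M torus then N and F torus} with $F=\mathcal{L}_v$, $M=L_q$, $N=L_p$: all three are topological manifolds (leaves of a singular Riemannian foliation are submanifolds), the fiber is connected, and the total space is a torus, so the proposition gives that $L_p$ is homeomorphic to a torus.

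I expect the only delicate point to be the bookkeeping in the first step: one must resist concluding $\overline{L}_p=\widetilde{L}_p$ from $\Gamma_p=\{e\}$, and instead read $H$ in Proposition~\ref{R: holonomy via the space of directions} as a subgroup of $\pi_1(L_p)$, so that trivial holonomy corresponds to $H$ being all of $\pi_1(L_p)$ rather than the trivial subgroup. The remaining ingredients---existence of a nearby principal leaf and connectedness of the fiber---are immediate from the machinery already set up, so no genuinely new estimate or construction is needed.
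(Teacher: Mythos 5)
Your proof is correct and takes essentially the same route as the paper: trivial holonomy forces $\overline{L}_p = L_p$ in the leaf fibration \eqref{EQ: Leaf Fibration}, whose total space $L_q$ is a torus by \cite[Corollary~B]{Galaz-Garcia2015}, and Proposition~\ref{P: F->M->N F conn and M torus then N and F torus} applied with connected fiber $\mathcal{L}_v$ then gives the claim. Your bookkeeping on Proposition~\ref{R: holonomy via the space of directions} --- reading $H$ as the preimage in $\pi_1(L_p)$ of the stabilizer of $v^\ast$, so that $\Gamma_p=\{e\}$ yields $H=\pi_1(L_p)$ rather than $H=\{e\}$ --- is exactly the right way to make the paper's implicit step precise.
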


In the case when the leaf $L_p$ has non-trivial holonomy we get the following proposition:

\begin{prop}\label{P: Holonomy implies Bieberbach type}
The leaves (of $\dim\neq 4$) with non-trivial holonomy of an $A$-foliation are homeomorphic to Bieberbach manifolds. 
\end{prop}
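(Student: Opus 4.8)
The plan is to show that a leaf $L_p$ with non-trivial holonomy is finitely covered by a torus, and then to pin down its homeomorphism type via topological rigidity. First I would choose a point $q=\exp_p(v)$ close to $p$ lying in the principal stratum; this is possible because the principal stratum is open and dense (Proposition~\ref{T: principal part is open dense connected}), so a generic $v\in\Sp^\perp_p$ works. With this choice $L_q$ is a principal leaf, hence homeomorphic to a torus by \cite[Corollary~B]{Galaz-Garcia2015} (equivalently by Corollary~\ref{P: Non holomony implies torus}). The relevant fibration is \eqref{EQ: Leaf Fibration}, namely $\mathcal{L}_v\to L_q\to\overline{L}_p$, whose fibre $\mathcal{L}_v$ is a single leaf of the infinitesimal foliation $\fol^p$ and is therefore connected. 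Applying Proposition~\ref{P: F->M->N F conn and M torus then N and F torus} to this fibration, with the total space $L_q$ a torus and the connected fibre $\mathcal{L}_v$, yields that both $\mathcal{L}_v$ and the base $\overline{L}_p$ are tori. In particular $\overline{L}_p$ is a torus, and by Proposition~\ref{R: holonomy via the space of directions} it is a finite cover of $L_p$.

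Next I would analyse $\pi_1(L_p)$. Since $\overline{L}_p\to L_p$ is a finite cover and $\overline{L}_p$ is a torus, the group $\pi_1(L_p)$ contains $\pi_1(\overline{L}_p)\cong\Z^n$ as a subgroup of finite index; hence $\pi_1(L_p)$ is finitely generated and virtually $\Z^n$. Because $L_p$ is a closed aspherical manifold (being a leaf of an $A$-foliation), $\pi_1(L_p)$ must be torsion-free: a non-trivial finite cyclic subgroup $\Z_k$ would act freely on the contractible universal cover, producing a $K(\Z_k,1)$ of finite cohomological dimension, contradicting that $K(\Z_k,1)$ has infinite cohomological dimension. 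This is the same argument used in the proof of Proposition~\ref{P: F->M->N F conn and M torus then N and F torus}. A finitely generated, torsion-free, virtually abelian group is a Bieberbach group (a torsion-free crystallographic group), so $\pi_1(L_p)$ is isomorphic to the fundamental group of a compact flat manifold $W$ with $\dim W=\dim L_p$.

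Finally, both $L_p$ and $W$ are closed aspherical manifolds with isomorphic fundamental groups, hence homotopy equivalent. Invoking the Borel conjecture for virtually abelian groups, which holds in every dimension except $4$ (see \cite{FarrellHsiang1983}), this homotopy equivalence is homotopic to a homeomorphism, so $L_p$ is homeomorphic to the Bieberbach manifold $W$; when $\Gamma_p$ is non-trivial one may arrange $v$ so that $\overline{L}_p\to L_p$ has degree greater than one, so that $W$ carries non-trivial holonomy and is genuinely not a torus. The main obstacle is precisely this appeal to topological rigidity: the Borel conjecture for such fundamental groups is unavailable in dimension $4$, which is exactly why $4$-dimensional leaves are excluded from the statement. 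The remaining steps — extracting the torus cover from the fibration \eqref{EQ: Leaf Fibration} and the group-theoretic identification of $\pi_1(L_p)$ as a Bieberbach group — are comparatively routine.
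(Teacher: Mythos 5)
Your proof is correct and follows essentially the same route as the paper: extract the torus cover $\overline{L}_p$ by applying Proposition~\ref{P: F->M->N F conn and M torus then N and F torus} to fibration \eqref{EQ: Leaf Fibration}, prove $\pi_1(L_p)$ is torsion-free via the cohomological-dimension argument for $K(\Z_k,1)$, identify $\pi_1(L_p)$ as a Bieberbach group, and invoke topological rigidity to realize $L_p$ as a Bieberbach manifold. The one point the paper treats more carefully is dimension $3$: Farrell--Hsiang's theorem excludes both $n=3$ and $n=4$, so the paper supplements it with Kreck--L\"uck's resolution of the Borel conjecture in dimension $3$ \cite{KreckLuck2009}, whereas your blanket citation of \cite{FarrellHsiang1983} for all dimensions $\neq 4$ glosses over this; also, your closing aside that $W$ is ``genuinely not a torus'' is unnecessary for the statement (a torus is itself a Bieberbach manifold) and is not justified as written, since a finite cover of degree greater than one does not preclude the base from being a torus.
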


\begin{proof}
In the case when the leaf $L_p$ has non-trivial holonomy, applying Proposition~\ref{P: F->M->N F conn and M torus then N and F torus} to  fibration \eqref{EQ: Leaf Fibration} we have that the covering $\overline{L}_p$ is homeomorphic to a torus. Thus, applying the long exact sequence of homotopy groups to the fibration $\overline{L}_p\to L_p$ with finite fiber $F$, we get,
\begin{linenomath}
\[
	0\to\pi_1(\overline{L}_p) \to \pi_1(L_p) \to \pi_0(F)\to 0.
\]
\end{linenomath}
Therefore $\pi_1(L_p)$ is a finite extension of $\pi_0(F)$ by $\pi_1(\overline{L}_p)$.  Assume that $\pi_1(L_p)$ is not torsion-free, and recall that since $\overline{L}_p$ is a torus, we have $\tilde{L}_p=\R^n$. Then there exists a finite cyclic subgroup $\Z_k$ acting on the contractible manifold $\tilde{L}_p = \R^n$. As in the proof of Proposition~\ref{P: F->M->N F conn and M torus then N and F torus} this contradicts the fact that the Eilenberg-MacLane space $K(\Z_k,1)$ has infinite cohomological dimension. Since $\pi_1(\overline{L}_p)$ is $\Z^n$ and $F$ is finite, we have that $\pi_1(L_p) = G$ is a crystallographic group (see \cite[Section~6]{FarrellHsiang1983},\cite{AuslanderKuranishi1957},\cite{Zassenhaus1948}). Thus $\pi_1(L_p)$ is a Bieberbach group, since it is a torsion free crystallographic group. By  \cite[Theorem~6.1]{FarrellHsiang1983}, for $n\neq 3,4$, the leaf $L_p$  is homeomorphic to a Bieberbach manifold. Recall that \cite[Theorem~0.7]{KreckLuck2009} states that the  Borel conjecture is true in dimension $3$. Thus in dimension $3$, since $L_p$ has fundamental group isomorphic to a Bieberbach group, $L_p$ is homeomorphic to a Bieberbach manifold.
\end{proof}

\begin{remark}\label{R: New definition of B-foliations}
In \cite[Definition~3.2]{Galaz-Garcia2015} the authors define a $B$-foliation as   an $A$-foliation with all leaves homeomorphic to Bieberbach manifolds. Since a torus is a Bieberbach space, it follows from Propositions~\ref{P: Non holomony implies torus} and~\ref{P: Holonomy implies Bieberbach type} that any $A$-foliation is a $B$-foliation, provided none of the leaves with non-trivial holonomy is $4$-dimensional. Because of this fact, we will not distinguish between $A$-foliations and $B$, in this work.  Nonetheless, we might consider $B$-foliations as those $A$-foliations on a Riemannian manifold $(M,g)$ for which the leaves with the induced Riemannian metric have sectional curvature equal to $0$, i.e. they are flat manifolds with respect to the induced metric. 
\end{remark}

\begin{remark}
The diffeomorphism type of the leaves of an $A$-foliation  may not be unique. If the leaves have trivial holonomy, i.e. are homeomorphic to tori, then for dimensions $k\geqslant 5$, there exist  different smooth structures $\{U_{\alpha_1},\varphi_{\alpha_1}\}$, $\{U_{\alpha_2},\varphi_{\alpha_2}\}$ on the $k$-torus $T^k$, such that $\tau^k_1 = (T^k,U_{\alpha_1},\varphi_{\alpha_1})$ is homeomorphic (as a topological manifold) to $\tau^k_2 = (T^k,U_{\alpha_2},\varphi_{\alpha_2})$, but $\tau^k_1$ is not diffeomorphic to $\tau^k_2$ (see for example \cite{Hsiang}) .

As a concrete example of this exotic phenomena,  we may consider   an exotic sphere $\Sigma^k$  and the standard torus:
\[
\T^k = \underbrace{\Sp^1\times \cdots \times\Sp^1}_{k\mbox{-times} }.
\]
The manifold $\T^k\#\Sigma^k$ is homeomorphic to $\T^k$ but not diffeomorphic to $\T^k$ (see  \cite[Remark~p.~18]{Farrell} and  \cite[Theorem~3]{FarrellOntaneda}).

In the case when the  leaves with the induced metric have sectional curvature less or equal to $0$, then the smooth Borel conjecture holds \cite[Theorem~3]{FarrellOntaneda}, and there is no ambiguity about the smooth structure of the leaves.
\end{remark}

%
%
%
%

\section{\texorpdfstring{$A$}{A}-foliations of codimension 2}\label{S: A-foliations of codimension 2}

We focus in this section to the particular case of $A$-foliations of codimension $2$ on simply-connected manifolds and how they compare to effective torus actions of codimension $2$.

\subsection{Leaf space of an \texorpdfstring{$A$}{A}-foliation of codimension 2.}

In this section we describe the leaf space of an $A$-foliation of codimension $2$ on a simply-connected manifold $(M,\fol)$. There are two possible cases: the foliation has no singular leaves (i.e. it is a regular foliation), or the foliation has singular leaves. In the second case we will be redundant and call it a singular $A$-foliation.

By Theorem~E  in \cite{Galaz-Garcia2015} it follows that, if the $A$-foliation $\fol$ is regular, then $M$ is diffeomorphic to $\Sp^3$ and the foliation $\fol$ is given by a Hopf weighted action. 

In the second case, when the $A$-foliation $\fol$ is singular, then again by Theorem~E \cite{Galaz-Garcia2015} the leaf space $M^\ast$ is a closed $2$-disk. It follows from \cite[Proposition~1.7]{Lytchak2010} that there are no exceptional leaves in $M_{\reg}$, and thus $M_{\reg} = M_{\prin}$. Furthermore, from the fact that the restriction of the foliation to a stratum $\Sigma_{(k)}$ containing only leaves of dimension $k$ is a regular foliation, then  \cite[Theorem~2.15]{Moerdijk} and the fact that $M^\ast = \D^2$ imply that the singular leaves of $\fol$ also have trivial holonomy. From \cite[Theorem~E]{Galaz-Garcia2015} it follows that there are two types of leaves: a \emph{least singular leaf}, homeomorphic to $T^{n-1}$ with infinitesimal foliation given by the homogeneous foliation $(\Sp^2,\Sp^1)$, and the \emph{most singular leaf}, homeomorphic to $T^{n-2}$ with infinitesimal  foliation given by the homogeneous foliation $(\Sp^3,\T^2)$. The singular leaves project via the quotient map onto the boundary of $M^\ast$. Moreover the boundary of $M^\ast$ consists of a union of arcs $\gamma_i$, $i\in\{1,2,\ldots,r\}$. The points in the interior of these arcs correspond to leaves homeomorphic to $T^{n-1}$. The end points, $F_i$, of these arcs correspond to leaves homeomorphic to $T^{n-2}$. (see figure~\ref{F: leaf space A-fol cod=2}).

\begin{figure}[h!]
\centering
\begin{tikzpicture}[scale=2]
		     Vertices
		    \path[coordinate] (0.5,0.866)  coordinate(A) 
		                (90:1) coordinate(B)
		                (-0.5,0.866) coordinate(C)
		                (135:1) coordinate(D)
		                (-1,0) coordinate(E)
		                (-0.866,-0.5) coordinate(F)
		                (225:1) coordinate(G)
		                (0,-1) coordinate(H)
		                (315:1) coordinate(I)
		                (0:1) coordinate(K)
		                (45:1) coordinate(L);

		    	\draw[thick] ([shift=(30:1cm)]0,0) arc (30:190:1cm);
		    	\draw[thick, -] ([shift=(190:1cm)]0,0) arc (190:215:1cm);
		    	\draw[thick] ([shift=(215:1cm)]0,0) arc (215:335:1cm);
		    \draw[very thick, dotted] ([shift=(31:1cm)]0,0) arc (31:-25:1cm);
		    \fill[gray!20]	([shift=(30:1cm)]0,0) arc (30:390:1cm);
		    
		   \draw[color=black] (0.1,0.3) node{$M^\ast$};
		    
		    \draw[color = black] (0.28,0.68) node{$\gamma_r$};
		    \draw[color = black] (111.5:0.83) node{$\gamma_1$};
		    \draw[color = black] (155:0.8) node{$\gamma_2$};
		    \draw[color = black] (-0.3,-0.8) node{$\gamma_i$};
		    \draw[color = black] (0.28,-0.8) node{$\gamma_{i+1}$};
		    \draw[color = black] (90:1.15) node{$F_1$}; 
		    \draw[color = black] (135:1.19) node{$F_2$};
		    \draw[color = black] (225:1.17) node{$F_i$};
		    \draw[color = black] (268:1.15) node{$F_{i+1}$};
		    \draw[color = black] (45:1.15) node{$F_{r}$};

		    \filldraw[black] 
		                     (B) circle (1pt)
		                     (D) circle (1pt)
		                     (E) circle (1pt)
		                     (G) circle (1pt)
		                     (H) circle (1pt)
		                     (I) circle (1pt)
		                     (L) circle (1pt);
		    \draw[color = black] (2.1, 0.3) node{Principal leaves};
		    \draw[stealth-] (0,0) .. controls +(0.5,-0.5) and +(-0.5,0) .. (1.4,0.3);
		
		    \draw[color = black] (-2.2,0.5) node{Leaf $T^{n-2}$};

		    \draw[-stealth] (-1.9,0.4) .. controls +(0.5,-0.5) and +(-0.5,0.2) .. (-1.05,0);
		    \draw[-stealth] (-1.7,0.6) .. controls +(0.3,0.1) .. (-0.75,0.70);
		
		    \draw[color = black] (-2.05, -0.45) node{Leaf $T^{n-1}$};
		    \draw[-stealth] (-2,-0.6) .. controls (-1.7,-1) and (-1,-1.1) .. (-0.4,-0.95);
		     \draw[-stealth] (-2,-0.3) .. controls (-1.3,0) and (-1.5,-0.3) .. (-1,-.3);
		  \end{tikzpicture}
	\caption{Leaf space of $A$-foliation of codimension two.}\label{F: leaf space A-fol cod=2}
\end{figure}
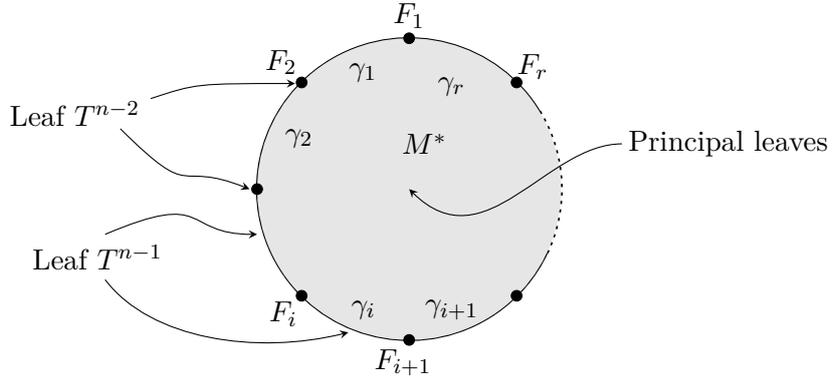

\subsection{Tubular neighborhoods of singular leaves of an \texorpdfstring{$A$}{A}-foliation  of codimension 2}

Before proving our comparison theorem for singular $A$-foliations of codimension $2$ on simply-connected manifolds we describe the tubular neighborhoods of the singular leaves of the foliation.

\begin{thm}\label{T: tubular neighborhood singular leaves of A-fol of codimension $2$}
For an $A$-foliation $(M,\fol)$ of codimension $2$ given a singular leaf $L$  and a fixed point $p\in L$, there exists $\varepsilon>0$ such that the tubular neighborhood of radius $\varepsilon$ around $L$ is foliated diffeomorphic to
\[
	(L\times \D_p^\perp, \{L\times \mathcal{L}_v\mid \mathcal{L}_v\in \fol^p\}).
\]
\end{thm}

\begin{proof}
Recall that the singular leaves of a singular $A$-foliation of codimension $2$ on an $(n+2)$-dimensional simply-connected manifold have dimension $(n-1)$ and $(n-2)$.

Suppose $L$ has dimension $(n-1)$ and fix $p\in L$. Then the infinitesimal foliation $(\Sp^\perp_p,\fol_p)$ is given by the $\Sp^1$-action on $\Sp^2$, which is also the suspension of the circle action on $\Sp^1$. Thus the group $\mathrm{O}(\fol_p)$ is equal to $\mathrm{SO}(2)$. Moreover, since the holonomy is trivial we have that $\mathrm{O}(\Sp_p^\perp,\fol^p) = \mathrm{O}(\fol^p)$. Therefore we see that the action of $K_p$ on $\Sp^\perp_p = \Sp^2$ is homotopic to the trivial action. From this if follows that the associated  disk bundle given by Theorem~\ref{T: Slice Theorem Foliations} is a trivial one.

Now assume that  $L$ has dimension $(n-2)$ and fix $p\in L$. Then the infinitesimal foliation $(\Sp^\perp_p,\fol_p)$ is given by a $\T^2$-action on $\Sp^3$, which decomposes as the join of two circle actions on two copies of $\Sp^1$. That is, $(\Sp^\perp_p,\fol_p) = (\Sp^1,\Sp^1)\star (\Sp^1,\Sp^1)$. Observe that  the group $K_p$ leaves each of the $\Sp^1$ factors invariant. Thus we conclude again that the $K_p$-action on $\Sp^\perp_p = \Sp^3$ is homotopic to a trivial action. Therefore again the associated disk bundle from Theorem~\ref{T: Slice Theorem Foliations} is a trivial one.
\end{proof}

\subsection{Weights of an \texorpdfstring{$A$}{A}-foliation  of codimension 2}

In this section we define invariants called \emph{weights} which allows us to compare $A$-foliations of codimension $2$ on simply-connected manifolds. These invariants extend the notion of weights given for an homogeneous $A$-foliations of dimension $1$, \cite{Fintushel77}, \cite{Orlik}, and of cohomogeneity $2$ (i.e. codimension $2$) \cite{Orlik1970}, \cite{Orlik1974}, \cite{Oh1983}.

Namely, for circle actions on simply-connected $3$-manifolds \cite{Orlik}, or on $4$-manifolds \cite{Fintushel77}, or torus actions of cohomogeneity $2$ on simply-connected $n$-manifolds with $n\geq 4$ \cite{Orlik1970}, \cite{Orlik1974}, \cite{Oh1983}, the weights determine the isotropy group of the orbit, and the isotropy representation. But by the Slice Theorem for group actions, the isotropy group and the isotropy representation describe a small tubular neighborhood of the orbit, as well as the action on this neighborhood.

Moreover, in the case  torus actions of cohomogeneity $2$ on simply-con\-nected $(n+2)$-manifolds, the possible isotropy groups are circles or $2$-torus \cite[(1.1) Lemma]{Oh1983}, and in both cases there is a unique isotropy representation. Thus the tubular neighborhood of the orbits is characterized by the embedding of the circle isotropy groups into $\T^{n}$. This embedding can be represented by an element $\bar{v}\in \pi_1(\T^n)$, which represent the image in $\pi_1(\T^n)$ of the generator of $\pi_1(\Sp^1)$ (see Section~\ref{S: torus actions}). With this alternative description of the weights for homogeneous $A$-foliations of codimension $2$ on simply-connected manifolds, we define an analogous weight for general $A$-foliations of codimension $2$ on simply-connected manifolds. 

Let $(M,\fol)$ be  a simply-connected  $(n+2)$-dimensional manifold with a singular $n$-dimensional $A$-foliation. We fix a principal leaf $L_0\subset M$. We consider any arbitrary point $p\in M$ such that $L_p$ has dimension $(n-1)$. Next we take $v\in \Sp^\perp_p$, such that $q=\exp_p(v)$ is contained in a principal leaf. From  the fact that the regular stratum $M^\ast_{\prin}$ is open and dense in $M^\ast$, there exists  a path $\gamma\colon I\to M_{\prin}^\ast$ connecting $q^\ast$ and $L_0^\ast$. Since $M_{\prin}\to M_{\prin}^\ast$ is a Riemannian submersion, we consider the horizontal lift $\gamma^q$ of $\gamma$ through $q$ (see \cite[Section~1.3]{Gromoll}). We set $q_0=\gamma^q(1)\in L_0$. Recall from subsection~\ref{SS: holonomy and local projections} that, in  this setting, for some cover $\overline{L}_p\to L_p$, we have a fibration
\[
	\mathcal{L}_v\to L_q\to \overline{L}_p. \tag{\ref{EQ: Leaf Fibration}}
\]
Moreover, since for singular $A$-foliations of codimension $2$, all leaves have trivial holonomy it follows that $\overline{L}_p = L_p$.
From \cite[Corollary~B]{Galaz-Garcia2015} and Proposition~\ref{P: F->M->N F conn and M torus then N and F torus},   we have  that $L_q = T^n$, $\overline{L}_p = T^{n-1}$, and $\mathcal{L}_v = T^1$. From the homotopy long exact sequences of the fibration we get a short exact sequence
\begin{linenomath}
\[
	0\to \pi_1(\mathcal{L}_v,v)\to\pi_1(L_q,q)\to \pi_1(\overline{L}_p,p)\to 1. 
\]
\end{linenomath} 
The path $\gamma\colon I \to M_{\prin}^\ast$ connecting $L_0^\ast$ to $L_q^\ast$ induces a homeomorphism $h_\gamma\colon L_0\to L_q$, and thus an isomorphism between $\pi_1(L_q,q)$ and $\pi_1(L_0,q_0)$.

Denote by $e_1$ the  generator of $\pi_1(\mathcal{L}_v,v)=\Z$. It is mapped to an element $\bar{v}_{p}$ in $\pi_1(L_0,q_0) =\Z^n$.

The definition of the integer vector $\bar{v}_{p}$ depends a priori on the choice of path $\gamma$ joining $L^\ast_0$ to $L_q^\ast$. The following lemma shows that in fact, it is independent of the choice of $\gamma$.

\begin{lem}\label{l: weights do not depend on path}
The   element $\bar{v}_p\in \pi_1(L_0,q_0)$ does not depend on the choice of path $\gamma\colon I\to M^\ast$ connecting $L_0$ to $L_q$.
\end{lem}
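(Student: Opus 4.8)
The plan is to isolate the single path-dependent ingredient in the construction of $a_{p1},\dots,a_{pk}$ and then to eliminate it using Corollary~\ref{C: homeos between principal leafs are homotopic}. Observe that the injection $\pi_1(\mathcal{L}_v,q)\hookrightarrow\pi_1(L_q,q)$ coming from the short exact sequence of fibration~\eqref{EQ: Leaf Fibration}, together with the chosen generators $e_1,\dots,e_k$ of $\pi_1(\mathcal{L}_v,q)$, is intrinsic to the data $(p,v)$ and to the leaf $L_q$; it makes no reference to $\gamma$. The path $\gamma$ enters only through the isomorphism $(h_\gamma)_\ast\colon\pi_1(L_0)\to\pi_1(L_q)$ induced by the homeomorphism $h_\gamma$, the classes $a_{p1},\dots,a_{pk}$ being the images of the $e_i$ under $(h_\gamma)_\ast^{-1}$. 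Thus it suffices to prove that $(h_\gamma)_\ast$ is independent of the choice of $\gamma$.

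To this end I would fix two paths $\gamma_0,\gamma_1\colon I\to\Sigma_{\prin}^\ast$ joining $L_0^\ast$ to $L_q^\ast$. Since $\Sigma_{\prin}^\ast$ is simply connected by the standing assumption of this subsection, Corollary~\ref{C: homeos between principal leafs are homotopic} applies and yields that the homeomorphisms $h_{\gamma_0},h_{\gamma_1}\colon L_0\to L_q$ are homotopic. I stress that this homotopy is \emph{free}: the horizontal lifts of $\gamma_0$ and $\gamma_1$ through $q$ need not share an endpoint in $L_0$, so $h_{\gamma_0}$ and $h_{\gamma_1}$ need not agree on basepoints.

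The next step is to pass to first homology. By \cite[Corollary~B]{Galaz-Garcia2015} the principal leaves $L_0$ and $L_q$ are tori, so their fundamental groups are abelian; consequently $\pi_1\cong H_1$ canonically, and the change-of-basepoint isomorphism on $\pi_1$ is itself independent of the connecting path. Freely homotopic maps induce the same homomorphism on $H_1$, whence $(h_{\gamma_0})_\ast=(h_{\gamma_1})_\ast$ as maps $\pi_1(L_0)\to\pi_1(L_q)$ under this canonical identification. Composing the common inverse isomorphism with the fixed injection $\pi_1(\mathcal{L}_v,q)\hookrightarrow\pi_1(L_q,q)$ then shows that the weights $a_{p1},\dots,a_{pk}$ produced by $\gamma_0$ and by $\gamma_1$ coincide, which is the assertion of the lemma.

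The one point that demands care, and the step I expect to be the main obstacle, is precisely the basepoint bookkeeping noted above: Corollary~\ref{C: homeos between principal leafs are homotopic} furnishes only a \emph{free} homotopy, and the source basepoints $\gamma_0^q(1),\gamma_1^q(1)\in L_0$ genuinely differ in general. This is why one cannot argue naively with based maps and must instead exploit that, for a torus, free homotopy already determines the induced map on $\pi_1$; it is the abelianness of the fundamental group of a torus that makes the comparison well defined and pins down the weights.
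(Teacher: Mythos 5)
Your proposal is correct and takes essentially the same route as the paper: both reduce the lemma to Corollary~\ref{C: homeos between principal leafs are homotopic} applied to two paths $\gamma_0,\gamma_1$ in the simply connected principal stratum. The only difference is one of detail: the paper's proof simply asserts that the corollary makes $(h_{\gamma_0})_\ast$ and $(h_{\gamma_1})_\ast$ equal, whereas you make explicit the basepoint bookkeeping — that the homotopy produced by the corollary is only free, and that it is the abelianness of $\pi_1$ of the torus (equivalently, passing to $H_1$) which renders the change-of-basepoint identification canonical — a subtlety the paper leaves implicit.
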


\begin{proof}
If we choose any other path $\gamma_1$ from $L^\ast_0$ to $L_q^\ast$, then Corollary~\ref{C: homeos between principal leafs are homotopic} shows that the group isomorphisms induced by $(h_{\gamma})_{\ast}\colon \pi_1(L_0,q_0)\to \pi_1(L_q,q)$ and $(h_{\gamma_1})_{\ast}\colon \pi_1(L_0,q_0)\to \pi_1(L_q,q)$ are equal. Therefore the  integer vector $\bar{v}_p$ does not depend of the curve $\gamma$.
\end{proof}

Next we prove that if we choose another vector $w\in \Sp^\perp_p$ such that $\exp_p(w)$ lies in a principal leaf, then we recover the same integer vector $\bar{v}_p$.

\begin{lem}\label{l: weights do not depend on q}
The integer vector $\bar{v}_p$ does not depend on the choice of $v \in\Sp^\perp_p$.
\end{lem}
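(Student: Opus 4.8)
The plan is to exhibit the assignment $v\mapsto(a_{p1},\dots,a_{pk})$ as a locally constant function on a connected set of admissible directions, and to conclude that it is constant. Write $\mathcal{U}=\{\,v\in\Sp^\perp_p : \exp_p(v)\text{ lies in a principal leaf}\,\}$. First I record a reduction: the weights depend only on the leaf of the infinitesimal foliation through $v$, hence only on $v^\ast\in\Sp^\perp_p/\fol^p$. Indeed, if $v$ and $v'$ lie in the same leaf of $\fol^p$, then by the construction of $\fol^p=\exp_p^\ast(\fol|_{S_p})$ the points $\exp_p(v)$ and $\exp_p(v')$ lie in the same connected component of $L\cap S_p$ for a single leaf $L$; thus $L_q=L_{q'}$ and $\mathcal{L}_v=\mathcal{L}_{v'}$ is the same component of the fiber of $\proj\colon L_q\to L_p$, so the fibration \eqref{EQ: Leaf Fibration} and all the data defining the $a_{pi}$ coincide. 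Note also that $k=\dim\mathcal{L}_v=n-\dim L_p$ is manifestly independent of $v$; the content is the dependence of the vectors themselves.

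For connectedness I would apply Proposition~\ref{T: principal part is open dense connected} to the infinitesimal foliation $(\Sp^\perp_p,\fol^p)$, which is a closed singular Riemannian foliation on the round sphere $\Sp^\perp_p$: its principal leaf space is connected and path connected. A direction lies in $\mathcal{U}$ precisely when its image $v^\ast$ lies in the manifold (principal) part of $\Sp^\perp_p/\fol^p$, since principal leaves of $\fol$ correspond to the manifold part of the leaf space and near $p^\ast$ the space $M^\ast$ is the cone over $\Sp^\perp_p/\fol^p$. Because $\fol^p$ is invariant under homotheties, membership in $\mathcal{U}$ is a radial condition, so $\mathcal{U}$ is (radially) homotopy equivalent to $T\cap M_{\prin}$, where $T=\exp_p(\nu_p^\varepsilon L_p)$ is the slice. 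Over the principal part the projection $\pi\colon M\to M^\ast$ restricts to a fiber bundle with connected fibers (principal leaves are connected with trivial holonomy), and its base $\pi(T\cap M_{\prin})$ is the cone, minus its tip, over the connected principal part of $\Sp^\perp_p/\fol^p$, hence is connected. A fiber bundle over a connected base with connected fiber has connected total space, so $T\cap M_{\prin}$, and therefore $\mathcal{U}$, is connected.

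For local constancy, fix $v\in\mathcal{U}$ and set $q=\exp_p(v)$. Recall from Proposition~\ref{R: holonomy via the space of directions} and the short exact sequence attached to \eqref{EQ: Leaf Fibration} that the image of $\pi_1(\mathcal{L}_v)$ in $\pi_1(L_q)$ equals $\ker\bigl(\proj_\ast\colon\pi_1(L_q)\to\pi_1(L_p)\bigr)$, so the subgroup $\langle a_{p1},\dots,a_{pk}\rangle\subset\pi_1(L_0)=\Z^n$ is the kernel of the map induced by the composite $L_0\to L_q\overset{\proj}{\to}L_p$, where $L_0\to L_q$ is the holonomy homeomorphism. As $v$ ranges over a small connected neighborhood inside $\mathcal{U}$, the point $q$ varies continuously in $M_{\prin}$, and with it the leaf $L_q$, the submersion $\proj$, and—by Corollary~\ref{C: homeos between principal leafs are homotopic}, available since $\Sigma_{\prin}^\ast$ is simply connected—the holonomy identification $\pi_1(L_q)\cong\pi_1(L_0)$, which is well defined up to homotopy. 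Since $L_p$, $\pi_1(L_p)$ and $\pi_1(L_0)=\Z^n$ are fixed and discrete, and the composite $L_0\to L_q\overset{\proj}{\to}L_p$ changes only by a homotopy, the induced homomorphism $\Z^n\to\pi_1(L_p)$ is locally constant; choosing the generators $e_i$ of $\pi_1(\mathcal{L}_v)$ continuously, the vectors $a_{p1},\dots,a_{pk}$ are themselves locally constant. Combined with the connectedness of $\mathcal{U}$, this yields the claim.

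I expect the main obstacle to be the local-constancy step, and precisely the verification that for two nearby principal leaves $L_q,L_{q'}$ the holonomy homeomorphism between them intertwines the two closest-point projections onto $L_p$ up to homotopy, so that the kernels match. This is cleanest inside a tubular neighborhood $\Tub(L_p)$: there each $\proj$ is the restriction of the bundle retraction $\Tub(L_p)\to L_p$, and the holonomy transport coming from a short path near $L_p$ respects this retraction, so the relevant triangle of maps commutes up to homotopy. Because every fundamental group in sight is abelian, the usual basepoint and free-versus-based homotopy ambiguities are harmless.
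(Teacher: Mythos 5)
Your route is genuinely different from the paper's, and its core is sound. The paper connects two admissible directions $v,w$ directly: by Proposition~\ref{T: principal part is open dense connected} applied to $(\Sp^\perp_p,\fol^p)$ it chooses a path $\beta$ in $(\Sp^\perp_p/\fol^p)_{\prin}$, transports $\mathcal{L}_v$ to $\mathcal{L}_w$ by horizontal lifts (a homeomorphism $h_\beta$, canonical up to homotopy by Corollary~\ref{C: homeos between principal leafs are homotopic}), and then checks, using abelianness of the fundamental groups to dispose of basepoint ambiguities, that transported generators have equal images in $\pi_1(L_0,q_0)$. You instead describe the weight subgroup invariantly, as $\langle a_{p1},\dots,a_{pk}\rangle=\ker\bigl((\proj\circ h_\gamma)_\ast\colon\pi_1(L_0)\to\pi_1(L_p)\bigr)$ --- this identity is correct, since the image of $\pi_1(\mathcal{L}_v)$ in $\pi_1(L_q)$ is $\ker\xi_\ast$ and the covering $\overline{L}_p\to L_p$ is $\pi_1$-injective --- and you show this kernel is locally constant via the tubular-neighborhood homotopy (the retraction composed with the horizontal-lift homotopy does give $\proj_{q'}\circ h_\beta\simeq \proj_q$, and Corollary~\ref{C: homeos between principal leafs are homotopic} handles the identifications with $L_0$). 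This buys an invariant description of the weight lattice; its mild cost is that it pins the weights down only up to a change of basis of that subgroup, whereas the paper's transport $h_\beta$ moves the chosen generators themselves. Since the $a_{pi}$ are in any case defined only after an arbitrary choice of generators of $\pi_1(\mathcal{L}_v)$, this is an acceptable reading of the lemma, but your phrase ``choosing the generators continuously'' is doing exactly the work that the paper's $h_\beta$ makes precise.

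One step fails as written: the connectedness of $\mathcal{U}$. You invoke ``a fiber bundle over a connected base with connected fiber has connected total space'' for $T\cap M_{\prin}\to\pi(T\cap M_{\prin})$, but the fibers of the leaf projection restricted to the slice $T$ are intersections of leaves with $T$, i.e.\ unions of infinitesimal leaves, and these are in general \emph{disconnected} --- this is precisely why the fibration \eqref{EQ: Leaf Fibration} has base the covering $\overline{L}_p$ rather than $L_p$ itself. The leaves are connected as fibers of $\pi$ on $M_{\prin}$, not as slice intersections, so the quoted principle does not apply to $T\cap M_{\prin}$. The repair is immediate from tools you already cite: by your own first reduction the weights factor through $v^\ast\in\Sp^\perp_p/\fol^p$, so it suffices that $(\Sp^\perp_p/\fol^p)_{\prin}$ be connected, which is exactly Proposition~\ref{T: principal part is open dense connected} applied to the infinitesimal foliation (the same fact the paper uses); alternatively, $\mathcal{U}$ is the preimage of this set under the leaf projection $\Sp^\perp_p\to\Sp^\perp_p/\fol^p$, an open map with connected fibers, hence is itself connected. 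Be aware that your global argument also needs that $\mathcal{U}^\ast$ is the \emph{whole} principal part of $\Sp^\perp_p/\fol^p$ (not just contained in it, which is all the paper needs), so the correspondence between principal leaves of $\fol$ near $L_p$ and principal leaves of $\fol^p$ should be stated in both directions.
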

\begin{proof}
Take $w\in \Sp^\perp_p$  with $w\neq v$, such that $q_1=\exp_p(w)$ lies on a principal leaf $L_{q_1}$. Since $(\Sp^\perp_p, \fol_p)$ is a singular Riemannian foliation with closed leaves, by Proposition~\ref{T: principal part is open dense connected}, the space $(\Sp^\perp_p/\fol^p)_{\prin}$ is path-connected. Therefore there exists a path $\beta\colon I\to (\Sp^\perp_p/\fol^p)_{\prin}$ from $\mathcal{L}_v^\ast\in \Sp^\perp_p/\fol^p$ to $\mathcal{L}_w^\ast\in \Sp^\perp_p/\fol^p$. By taking horizontal lifts of $\beta$ in $(\Sp^\perp_p,\fol^p)_{\reg}$ we obtain a homeomorphism $h_{\beta}\colon \mathcal{L}_v\to \mathcal{L}_w$. By setting $q_1' = \exp_p(h_\beta(v))$, the homeomorphism $h_\beta$ induces an isomorphism $(h_\beta)_\ast\colon \pi_1(\mathcal{L}_v,q)\to \pi_1(\mathcal{L}_w,q_1')$.  From Corollary~\ref{C: homeos between principal leafs are homotopic}, this isomorphism is independent of the choice of $\beta$. 

Let $\sigma$ be a path in $\mathcal{L}_w$ from $q_1$ to $q_1'$. This gives an isomorphism from $\pi_1(\mathcal{L}_w,q_1')$ onto $\pi_1(\mathcal{L}_w,q_1)$, given by mapping an element $[\delta]\in \pi_1(\mathcal{L}_w,q_1')$ to $[\sigma^{-1}\delta\sigma]$. Let $\alpha$ be another path in $\mathcal{L}_w$ from $q_1$ to $q_1'$.  Consider the concatenation of paths $\sigma\alpha^{-1}\delta\alpha\sigma^{-1}$. The path $\alpha\sigma^{-1}$ is a loop based at $q_1'$. Thus we have a conjugation $[\sigma\alpha^{-1}][\delta][\alpha\sigma^{-1}]$ in $\pi_1(\mathcal{L}_v,q_1')$. Since we have an $A$-foliation, $\mathcal{L}_w$ is homeomorphic to a torus. Thus $\pi_1(\mathcal{L}_v,q_1')$ is an abelian group. Therefore the path   $\sigma\alpha^{-1}\delta\alpha\sigma^{-1}$ is homotopic to  $\sigma$, relative to the end points. Thus $\alpha^{-1}\delta\alpha$ is homotopic to $\sigma^{-1}\delta\sigma$. Therefore the isomorphism from  $\pi_1(\mathcal{L}_w,q_1')$ onto $\pi_1(\mathcal{L}_w,q_1)$, does not depend on the path $\sigma$. It follows that  we have a well defined isomorphism from $\pi_1(\mathcal{L}_v,q)$ to $\pi_1(\mathcal{L}_w,q_1)$.

Let $h_\gamma\colon L_q\to L_0$ and $h_\lambda\colon L_{q_1}\to L_0$, be homeomorphisms given by paths $\gamma\colon I\to M^\ast_{\prin}$ and $\lambda\colon I\to M^\ast_{\prin}$. Set $x_0 = h_\lambda(q_1')$, $y_0 = h_\lambda(q_1)$ and $q_0 = h_\gamma(q)$ (see Figure~\ref{F: well def weights}). Denote by $i_1\colon \mathcal{L}_v\to L_q$ and $i_2\colon \mathcal{L}_w\to L_{q_1}$ the inclusions, given by the bundles \eqref{EQ: Leaf Fibration}, of the infinitesimal leaves into the leaves $L_q$ and $L_{q_1}$, respectively. The homeomorphism $h_\beta$ induces an isomorphism from $(h_\gamma\circ i_1)_\ast(\pi_1(\mathcal{L}_v,q))$ onto $(h_\lambda\circ i_2)_\ast(\pi_1(\mathcal{L}_w,q_1'))$. The path $\sigma\colon I\to \mathcal{L}_w$ gives a well defined isomorphism from $(h_\gamma\circ i_2)_\ast(\pi_1(\mathcal{L}_w,q_1') )$ onto $(h_\lambda\circ i_2)_\ast(\pi_1(\mathcal{L}_w,q_1) )$. Thus the generator of $\pi_1(\mathcal{L}_v,q)$ in $\pi_1(L_0,q_0)$ is mapped to the generator of $\pi_1(\mathcal{L}_w,q_1)$. From this we see that the integer 
vector $\bar{v}_p$ does not depend on $v$.
\end{proof}

\begin{figure}[h]
\centering
\includegraphics[scale=0.6]{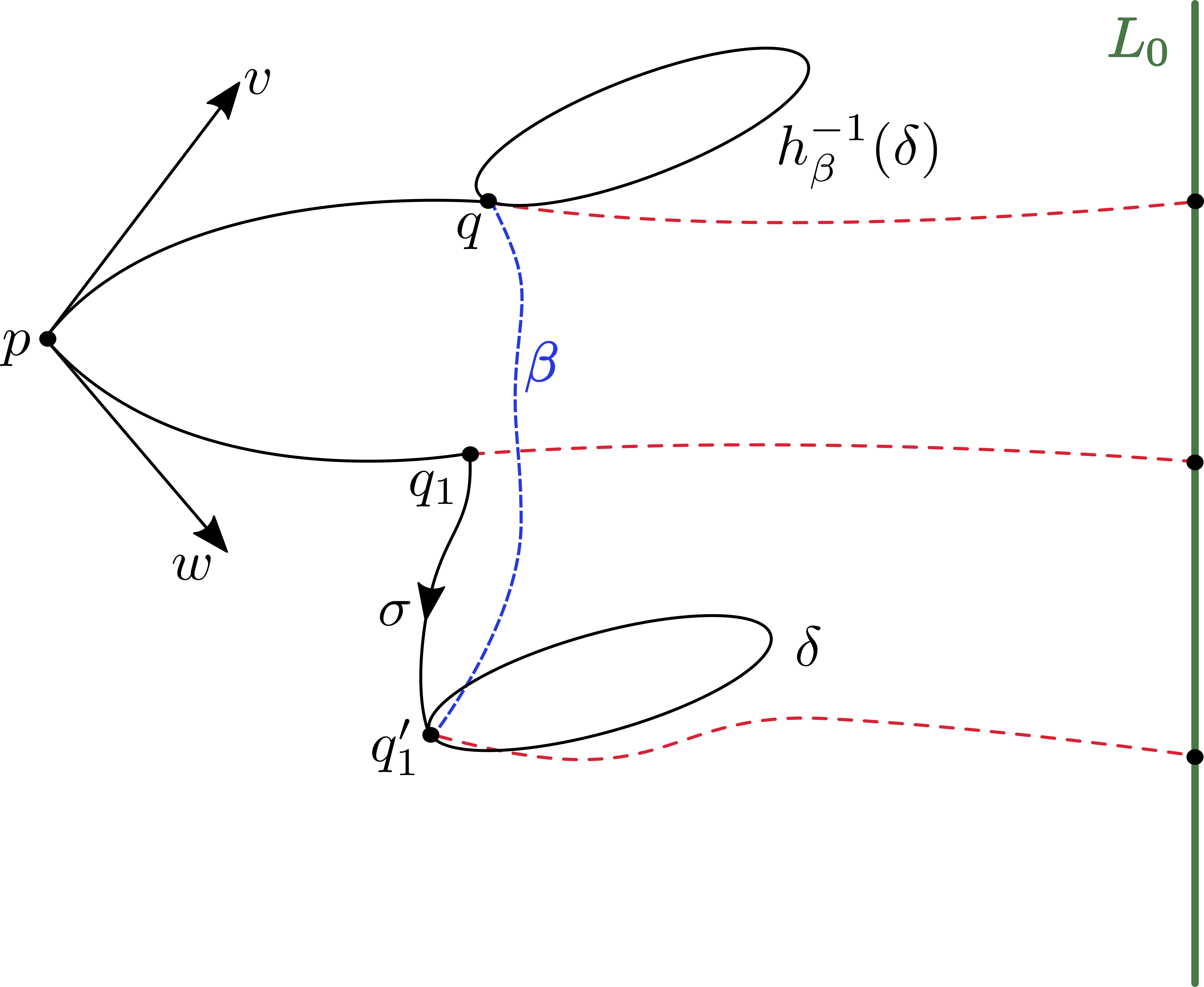}
\caption{Well defined weights.}\label{F: well def weights}
\end{figure}

From the proof of the previous lemma, by using the fact that the fundamental groups of  $L_p$ and $L_0$ are  abelian, it follows that the definition of the  integer vector $\bar{v}_{p}$  does not depend on the choice of the  base point $p$ in $L_p$. We state this explicitly:

\begin{lem}
The weight $\bar{v}_{p}$ of $L_p$ does not depend on the choice of $p\in L_p$.
\end{lem}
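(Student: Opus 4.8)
The plan is to reduce the change of basepoint to the situation already settled in Lemma~\ref{l: weights do not depend on q}, the one new ingredient being the sliding map of Theorem~\ref{t: Sliding along a singular leaf}, which transports the normal data at $p$ to a second basepoint $p'\in L_p$. First I would fix $p,p'\in L_p$ and a piecewise smooth path $\delta\colon[0,1]\to L_p$ with $\delta(0)=p$ and $\delta(1)=p'$. Theorem~\ref{t: Sliding along a singular leaf} gives a foliated linear isometry $G_t=G(t,\cdot)\colon\nu_pL\to\nu_{\delta(t)}L$, normalized so that $G_0=\mathrm{Id}$. Setting $w=G_1(v)\in\Sp^\perp_{p'}$ and $q'=\exp_{p'}(\varepsilon w)$ (with $\varepsilon$ small), property~\eqref{t: Sliding along a singular leaf c} guarantees that $q'$ lies on the same leaf $L_q$ as $q=\exp_p(\varepsilon v)$, that $\eta(t)=\exp_{\delta(t)}(\varepsilon G_t(v))$ is a path in $L_q$ from $q$ to $q'$, and that $G_1$ carries the infinitesimal leaf $\mathcal{L}_v$ onto $\mathcal{L}_w$. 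Thus the data defining the weights at $p'$ may be taken to be exactly the slid data $(w,q',\mathcal{L}_w)$, which is legitimate by Lemma~\ref{l: weights do not depend on q}.

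The key computation is to compare the two inclusion-induced maps $(i_1)_*\colon\pi_1(\mathcal{L}_v,q)\to\pi_1(L_q,q)$ and $(i_2)_*\colon\pi_1(\mathcal{L}_w,q')\to\pi_1(L_q,q')$ arising from fibration~\eqref{EQ: Leaf Fibration} at $p$ and at $p'$. I would feed a loop $c\colon\Sp^1\to\mathcal{L}_v$ based at $v$ into the two-parameter map $\Phi(u,t)=\exp_{\delta(t)}(\varepsilon G_t(c(u)))$. By property~\eqref{t: Sliding along a singular leaf c} the image of $\Phi$ lies entirely in $L_q$; at $t=0$ it is the loop $i_1\circ c$ based at $q$, at $t=1$ it is $i_2\circ(G_1\circ c)$ based at $q'$, and the basepoint track $\Phi(u_0,\cdot)$ is precisely $\eta$. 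Hence $\Phi$ is a free homotopy in $L_q$ with basepoint track $\eta$, and the standard relation between free and based homotopy yields $(i_1)_*=\eta_\#\circ(i_2)_*\circ(G_1)_*$, where $\eta_\#\colon\pi_1(L_q,q')\to\pi_1(L_q,q)$ is the change-of-basepoint isomorphism along $\eta$.

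It remains to transport both sides to the fixed principal leaf $L_0$. Since $q$ and $q'$ lie on the same leaf, $L_q^\ast=L_{q'}^\ast$, so the \emph{same} base path $\gamma$ in $M^\ast$ serves at both basepoints; this is permissible by Lemma~\ref{l: weights do not depend on path}, and the homeomorphism $h_\gamma\colon L_0\to L_q$ of Corollary~\ref{C: homeos between principal leafs are homotopic} is literally the same map. Applying $(h_\gamma^{-1})_*$ and using naturality of the change-of-basepoint isomorphism under the homeomorphism $h_\gamma^{-1}$, the displayed identity shows that the weight vectors computed at $p'$ differ from those at $p$ only by the change-of-basepoint isomorphism of $\pi_1(L_0)$ along $\zeta=h_\gamma^{-1}\circ\eta$. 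By \cite[Corollary~B]{Galaz-Garcia2015} together with Proposition~\ref{P: F->M->N F conn and M torus then N and F torus}, $L_0$ is a torus and $\pi_1(L_0)$ is abelian; therefore the change-of-basepoint isomorphism along any path, in particular along $\zeta$, is canonical and independent of the path, so the two weight vectors are literally equal. I expect the main obstacle to be the careful bookkeeping in the free-to-based homotopy step: one must verify that $G_t$ carries $\mathcal{L}_v$ to $\mathcal{L}_w$ for \emph{all} $t$ (not merely at the endpoints) and that $\Phi$ stays inside the single leaf $L_q$, both of which rest entirely on part~\eqref{t: Sliding along a singular leaf c} of Theorem~\ref{t: Sliding along a singular leaf}.
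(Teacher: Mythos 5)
Your proof is correct, and it contains genuine content that the paper omits: the paper's own justification of this lemma is a single sentence asserting that the claim follows ``from the proof of the previous lemma'' together with the abelianness of $\pi_1(\overline{L}_p)$ and $\pi_1(L_0)$. That previous argument (Lemma~\ref{l: weights do not depend on q}) compares two normal vectors at the \emph{same} point $p$ by lifting paths in $(\Sp^\perp_p/\fol^p)_{\prin}$, and this mechanism cannot by itself compare infinitesimal data at two different points $p,p'\in L_p$; some transport along the leaf is required, and the paper never names it. Your proposal supplies exactly that missing ingredient: the sliding map of Theorem~\ref{t: Sliding along a singular leaf} along a path $\delta$ in $L_p$, with property~\eqref{t: Sliding along a singular leaf c} keeping the two-parameter homotopy $\Phi$ inside the fixed principal leaf $L_q$ and justifying the choice of comparison data $(w,q',\mathcal{L}_w)$ at $p'$, which is legitimate by Lemma~\ref{l: weights do not depend on q}. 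From there your bookkeeping --- the free homotopy with basepoint track $\eta$, reuse of the same base path $\gamma$ via Lemma~\ref{l: weights do not depend on path}, and naturality of basepoint change under $h_\gamma^{-1}$ --- lands precisely on the step the paper does cite: since $\pi_1(L_0)$ is abelian, all change-of-basepoint isomorphisms are canonical, so the weight vectors at $p$ and $p'$ agree on the nose. Two minor remarks: the fact that $G_t$ carries $\mathcal{L}_v$ onto $\mathcal{L}_{G_t(v)}$ is most directly part~\eqref{t: Sliding along a singular leaf b} of Theorem~\ref{t: Sliding along a singular leaf} (the foliated-isometry property), though it also follows from part~\eqref{t: Sliding along a singular leaf c} together with connectedness of $\mathcal{L}_v$; and your argument needs only the abelianness of $\pi_1(L_0)$, slightly less than the paper's invocation of both $\pi_1(\overline{L}_p)$ and $\pi_1(L_0)$.
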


Observe that the weights a priori depend of the principal leaf $L_0$ fixed at the beginning. We show that for a singular $A$-foliation of codimension $2$ on a simply-connected manifold the weights do not depend on the choice of the principal leaf $L_0$.

\begin{lem}
Let $(M,\fol)$ be a compact simply-connected $(n+2)$-dimensional manifold with a singular $A$-foliation of codimension $2$. Let $L_1$ and $L_2$ be principal leaves, and take $p\in M$ a point contained in an $(n-1)$-dimensional leaf. Denote by $\bar{v}_{p,1}$ the weight of $L_p$ associated with $L_1$ and by $\bar{v}_{p,2}$ the weight of $L_p$ associated with $L_2$. Then $\bar{v}_{p,1} = \bar{v}_{p,2}$. 
\end{lem}

\begin{proof}
We begin by fixing a third principal leaf $L_0$ and by pointing out that  $\pi\colon M_{\prin}\to M_{\prin}^\ast$ is a fiber bundle, with fibers the principal leaves. Since $M_{\prin}^\ast$ is homeomorphic to an open $2$-disk and thus contractible, then $M_{\prin}$ is foliated homeomorphic to  $M_{\prin}^\ast\times L_0$. Therefore the fundamental group of $L_1$ and $L_2$ get identified with the fundamental group of $L_0$. Moreover the circles in $L_0$ determined by $\bar{v}_{p,1}$ and $\bar{v}_{p,2}$ correspond to the principal leaves of $\fol^p$. Thus we conclude that under the identification $M_{\prin} = M_{\prin}^\ast\times L_0$ we have $\bar{v}_{p,1} = \bar{v}_{p,2}$.
\end{proof}

Last we show that the integer vector $\bar{v}_p$ is constant along the connected components of the singular stratum $\Sigma_{(n-1)}^\ast\in M^\ast$.

\begin{lem}
Let $(M,\fol)$ be an $A$-foliation of codimension $2$ on an $(n+2)$-dimensional simply-connected manifold $M$. Then for $p_1,p_2\in M$  such that $p_1^\ast$ and $p_2^\ast$ are contained in the same connected component of least singular leaves, i.e. $\Sigma_{(n-1)}^\ast\subset M^\ast$, it holds that $\bar{v}_{p_1} = \bar{v}_{p_2}$. 
\end{lem}

\begin{proof}
Recall that a connected component of $\Sigma_{(n-1)}^\ast$ is an open edge contained in $\partial M^\ast$. Thus, there is only one possible path $\gamma^\ast$ joining $p_1^\ast$ to $p_2^\ast$. Since the quotient map $\pi\colon\Sigma_{(n-1)}\to \Sigma_{(n-1)}^\ast$ is a fiber bundle, by taking horizontal lifts of $\gamma^\ast$ we get a homeomorphism $h_{\gamma^\ast}\colon L_{p_1}\to L_{p_2}$. From this we get an isomorphism between $\pi_1(L_{p_1},p_1)$ and $pi_1(L_{p_1},p_1)$. Now we recall that for any close points $q_1 = \exp_{p_1}(v_1)$ and $q_2 = \exp_{p_2}(v_2)$   contained in principal leaves, by Theorem~\ref{T: tubular neighborhood singular leaves of A-fol of codimension $2$} we have a group isomorphism $\pi_1(L_{q_i},q_i) = \pi_1(L_{p_i},p_i)\times \pi_1(\mathcal{L}_{v_i},v_i)$. Here we are abusing the notation by writing as $\pi_1(\mathcal{L}_{v_i},v_i)$ the one-dimensional subgroup of $\pi_1(L_{q_i},q_i)$ generated by the weight $\bar{v}_{p_i}$. With this we conclude that the weights $\bar{v}_{p_1}$ are the same $\bar{v}_{p_2}$. 
\end{proof}

\begin{remark}
Since each connected component of the singular stratum $\Sigma_{(n-1)}^\ast\in M^\ast$ is an open edge in $\partial M^\ast$, we write the collection of weights as follows: we label the edges $\gamma_1,\ldots,\gamma_r$  as shown in Figure~\ref{F: leaf space A-fol cod=2}. To each edge $\gamma_i$ we add the associated weight $\bar{v}_i = (a_{i1},\ldots,a_{in})\in \Z^n$. We collect this information in a list  $\{\bar{v}_1,\ldots,\bar{v}_n\}$ of weights.
\end{remark}

Also we point that the fibration $\eqref{EQ: Leaf Fibration}$ is given by a circle action: 

\begin{prop}\label{C: S1 bundles of A-fol are principal}
The fiber bundle \eqref{EQ: Leaf Fibration} is a principal $\Sp^1$-bundle.
\end{prop}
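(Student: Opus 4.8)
The plan is to show that the structure group of the circle bundle \eqref{EQ:circle fibration} reduces all the way down to $SO(2)\cong\Sp^1$ acting on the fiber by rotations; once this is achieved, the commuting free rotation action on the fibers assembles into a global free $\Sp^1$-action on $T^n$ covering the identity on $T^{n-1}$, which is exactly the assertion that the bundle is principal. A priori, by Ehresmann's Lemma the structure group of \eqref{EQ:circle fibration} is only the full diffeomorphism group $\Diff(\Sp^1)$, so the entire content lies in reducing it.

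First I would reduce to the orientation-preserving diffeomorphisms $\Diff^+(\Sp^1)$, i.e.\ show that \eqref{EQ:circle fibration} is \emph{fiber-orientable}. Over the total space $L_{q_i}=T^n$ one has the short exact sequence of vector bundles
\[
0\to T^{\mathrm{vert}}\to T L_{q_i}\to \xi^\ast T\overline{L}_{p_i}\to 0,
\]
where $T^{\mathrm{vert}}$ is the subbundle tangent to the fibers. Passing to top exterior powers gives $\det T L_{q_i}\cong \det T^{\mathrm{vert}}\otimes \xi^\ast\det T\overline{L}_{p_i}$. Since $L_{q_i}\cong T^n$ and $\overline{L}_{p_i}=L_{p_i}\cong T^{n-1}$ are tori, hence orientable, both $\det TL_{q_i}$ and $\det T\overline{L}_{p_i}$ are trivial; therefore $\det T^{\mathrm{vert}}$ is trivial, i.e.\ the vertical bundle is orientable. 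A continuous orientation of $T^{\mathrm{vert}}$ orients every fiber $\Sp^1_i$ coherently, so the transition functions of \eqref{EQ:circle fibration} can be taken in $\Diff^+(\Sp^1)$. (In the language of Section~\ref{SS: holonomy and local projections}, the transport maps identifying the fibers are restrictions to the leaf $\mathcal{L}_v$ of foliated isometries of the homogeneous infinitesimal foliation $(\Sp^2,\Sp^1)$, hence lie in $O(\fol^p)|_{\mathcal{L}_v}\cong O(2)$; fiber-orientability says exactly that the resulting monodromy $\pi_1(\overline{L}_{p_i})\to O(2)/SO(2)\cong\Z_2$ is trivial.)

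Next I would reduce $\Diff^+(\Sp^1)$ to $SO(2)$, using the classical fact that the inclusion $SO(2)\hookrightarrow\Diff^+(\Sp^1)$ is a homotopy equivalence, so that $BSO(2)\to B\Diff^+(\Sp^1)$ is a homotopy equivalence. Consequently the classifying map of \eqref{EQ:circle fibration} factors up to homotopy through $BSO(2)$, and the bundle admits a reduction of its structure group to $SO(2)\cong\Sp^1$ acting by rotations. Because $SO(2)$ acts on the fiber $\Sp^1=SO(2)$ by left translation, such a reduction is precisely a principal $\Sp^1$-bundle structure: the commuting right translations assemble into a globally well-defined free $\Sp^1$-action on $T^n$ whose orbit space is $T^{n-1}$.

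The main obstacle is this reduction of the structure group, and it is essential that $L_{q_i}$ and $\overline{L}_{p_i}$ be \emph{genuine}, in particular orientable, tori rather than merely aspherical manifolds: orientability of the total space and base is exactly what kills the potential $\Z_2$ monodromy (a reflection of the fiber) that would otherwise yield a non-principal circle bundle, as in the Klein-bottle-over-$\Sp^1$ example. The remaining ingredient, that $SO(2)\hookrightarrow\Diff^+(\Sp^1)$ is a homotopy equivalence, then makes the final reduction to $SO(2)$ unobstructed.
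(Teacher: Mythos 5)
Your proposal is correct and takes essentially the same approach as the paper: first establish fiber-orientability of the bundle \eqref{EQ:circle fibration} from the orientability of the tori, then conclude that an orientable $\Sp^1$-bundle is principal --- the paper cites this last step as \cite[Proposition~6.15]{Morita}, whose proof is precisely your classifying-space argument using that $SO(2)\hookrightarrow\Diff^+(\Sp^1)$ is a homotopy equivalence. Your determinant-line-bundle derivation of fiber-orientability is a cleaner formulation of the paper's argument (which extends an orientation of the fibers to a positively oriented frame using orientability of $T^n$), but it is the same idea in substance.
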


\begin{proof}
First we show that the bundle \eqref{EQ: Leaf Fibration} is an orientable fiber bundle. We choose an arbitrary orientation for the fiber $\Sp^1_i$ in local charts, to obtain a vector field, tangent to the circles in the total space. Since the $n$-torus is orientable, we can extend this vector field to a basis, such that the transition maps have positive determinant in this basis.

Indeed if  we choose on a local chart an orientation of the fiber $\Sp^1_i\subset T^n$, we can extend it to a basis of the tangent spaces of $T^n$. Since $T^n$ is orientable we can do this construction in such a way that for two open trivial neighborhoods, the orientations of the fibers are positive.

From \cite[Proposition~6.15]{Morita} it follows that \eqref{EQ: Leaf Fibration} is a principal $\Sp^1$-bundle.
\end{proof}

\begin{remark}
For a general  singular Riemannian foliation on a simply-connected manifold, by Theorem \ref{T: Slice Theorem Foliations}, the foliation on a tubular neighborhood of a leaf $L$ is determined by the infinitesimal foliation $(\Sp^\perp_p,\fol^p)$, the conjugation class of a subgroup $K_p<\mathrm{O}(\Sp^\perp_p,\fol^p)$ and a principal $K_p$-bundle over $L$. So these object are the natural choices for ``weights'' in the general setting. In the case of $A$-foliations, we have more structure, due to the leaves having a fixed topology. In the case where a leaf $L$ of an $A$-foliation is regular we have that the infinitesimal foliation is the trivial foliation by points. Thus the foliation on a tubular neighborhood is determined by a subgroup $K_p<\mathrm{O}(\Sp^\perp_p)$. Moreover, by considering the  natural projection $K_p\to \Gamma_L$, and the fact that $\mathrm{O}(\fol^p)= \{\mathrm{Id}\}$ we have that $K_p = \Gamma_L$, and thus it is discrete. This implies that for regular leaves, the foliation in the tubular neighborhood of $L$ is determined by the Biberbach group $\Gamma_L$. Thus the holonomy group $\Gamma_L$ of regular leaves can be consider the weight for such leaves.  For a singular leaf $L$ of an $A$-foliation on a simply-connected manifold, we might consider whether the injection $\pi_1(\mathcal{L}_v,v)\to \pi_1(L_q,q)$, where $\mathcal{L}_v\subset \Sp^\perp_p$ principal leaf of $\fol^p$, together with the holonomy group $\Gamma_L$ determine the foliation in a tubular neighborhood of $L$, as it happens for the codimension $2$ case. 
\end{remark}

%
%
%
%

\subsection{Cross-sections}

With the leaf space description of a singular $A$-foli\-ation of codimension $2$  on a simply-connected manifold at hand, we can show the existence of a cross-section. To prove  this we use results for general singular Riemannian foliations presented in Section \ref{S: obstruction to cross-sections}. Namely, we show that obstructions for the existence of cross-sections presented in Theorems \ref{T: first obstructions} and \ref{T: second obstruction} vanish for $A$-foliations of codimension $2$ on simply-connected manifolds with singular leaves.

\begin{thm}\label{C: Existence cross-sections codimension 2 A foliations}
Let $(M,\fol)$ be a singular $A$-foliation of codimension $2$ on a compact-simply connected manifold of dimension $(n+2)\geqslant 2$. Then there exists a continuous cross-section $\sigma\colon M^\ast\to M$
\end{thm}

\begin{proof}
Observe that a principal $L$ is homeomorphic to an $n$-dimensional torus, and thus it is connected and simple. Since $M_{\prin}^\ast$ is an open $2$-disk, then for $A^\ast\subset M^\ast_{\prin}$ a closed $2$-disk, $A^\ast$ is a closed CW-complex. Consider now  the mapping cylinder $M_{\pi}$ of the quotient map $\pi = \pi|_{A}\colon A\to A^\ast$. Since $A^\ast$ is simply connected it follows that $(M_{\pi},A^\ast)$ is a simple pair. Since $A^\ast$ is contractible,  by Theorem~\ref{T: first obstructions} there exists a cross-section over $A^\ast$ for $\pi\colon A\to A^\ast$.  Also note that $(M^\ast,A^\ast)$ is a CW-pair. 

Now consider the mapping path fibration  $F_\pi\to E_{\pi}\to M^\ast$  of the quotient map $\pi\colon M\to M^\ast$, and recall that $E_\pi$ has the same homotopy groups as $M$. Then from the long exact sequence of the fibration, and the fact that $M$ is simply connected and $M^\ast$ is contractible it follows that $\pi_1(F_\pi)=1$. Thus we conclude that the homotopy fiber $F_\pi$ is a simple space. Since $A^\ast$ is homotopy equivalent to $M^\ast$ it follows from Corollary~\ref{C: M_prin/F homotopy equiv to M/F and section on M_prin imply section on M/F}  that there exists a cross-section $\sigma\colon M^\ast\to M$ extending the cross-section defined over $A^\ast$.
\end{proof}

\begin{remark}
Observe that for a singular $A$-foliation of codimension $2$ on a simply-connected compact manifold since the principal part $M^\ast_{\prin}$ is contractible, then the fiber bundle $\pi\colon M_{\prin}\to M_{\prin}^\ast$ is a trivial bundle. That is, $M_{\prin}$ is diffeomorphic to $M_{\prin}^\ast\times L$. Thus we do not need Theorem~\ref{T: first obstructions} to get a cross-section over a closed disk $A^\ast\subset M_{\prin}^\ast$
\end{remark}

\subsection{Classification of \texorpdfstring{$A$}{A}-foliations of  codimension 2}

We say that two weighted leaf spaces, $M_1^\ast$ and $M_2^\ast$, are \emph{isomorphic} if there is a homeomorphism $\varphi\colon M_1^\ast \to  M_2^\ast$ sending the weights of $M_1^\ast$ to the weights of $M_2^\ast$. The map $\varphi$ is called an \emph{isomorphism between the weighted leaf spaces}, or just simply an \emph{isomorphism between the leaf spaces}.

The following theorem shows the weighted space classifies the topology of $M$ as well as the foliation $\fol$.
\vspace*{1.5ex}
\begin{duplicate}[\ref{T: weights classify the foliation}]
Let $(M_1 , \fol_1 )$ and $(M_2 , \fol_2 )$ be compact, $(n+2)$-dimensional, simply-connected Rie\-mannian manifolds with singular $A$-folia\-tions of codimension $2$.  If the leaf spaces $M_1^\ast$ and $M_2^\ast$ are isomorphic, then $(M_1,\fol_1)$ is foliated homeomorphic to $(M_2,\fol_2)$.
\end{duplicate}
\vspace*{1ex}

\begin{proof}
Fix the two principal leafs $L^1_0\in \fol_1$ and $L^2_0\in\fol_2$ used to define the weights of the foliations, and observe that they are $n$-dimensional. By Corollary~\ref{P: Non holomony implies torus} there exists homeomorphisms $\phi_0^1\colon L_0^1\to T^n$ and $\phi_0^2\colon L_0^2\to T^n$. We set $\phi_0 = (\phi_0^2)^{-1}\circ \phi_0^1$ an homeormphism between $L^1_0$ and $L^2_0$. Denote by $\phi^\ast\colon M_1^\ast\to M_2^\ast$ a weight preserving homeomorphism. Since for $i=1,2$ the sets $(M_i)_{\prin}^\ast$ are open disks, and we have fiber bundles $\pi_i\colon (M_i)_{\prin}\to (M_i)_{\prin}^\ast$ with fibers homemorphic to $L_0^i$, then we conclude that the bundles are trivial ones. Thus, there exists a fiber bundle homeomorphisms $\tau^i_0\colon (M_i)_{\prin}\to  (M_i)_{\prin}^\ast \times L_0^i$. We now set $\phi = \tau_2^{-1}\circ (\phi^\ast\times \phi_0)\circ \tau_1\colon (M_1)_{\prin}\to (M_2)_{\prin}$. Observe that by construction $\phi$ is a foliated homeomorphism. 

We claim that we can extend $\phi$ in a continuous fashion to the singular strata $\pi_1^{-1}(\partial M_1^\ast)$, and next prove this claim. Denote by $\sigma_i\colon M_i^\ast\to M_i$ the cross-sections to the quotient maps $\pi_i\colon M_i\to M_i^\ast$ given by Theorem~\ref{C: Existence cross-sections codimension 2 A foliations}.  Recall that there are only two types of singular leaves: one type has dimension $(n-1)$ and the other one has dimension $(n-2)$, and in both cases the holonomy groups are trivial. Thus for $p^\ast\in M_1^\ast\setminus (M_1^\ast)_{\prin}$  a sufficiently small foliated tubular neighborhood of $L_{\sigma_1(p^\ast)}$ is foliated diffeomorphic via a map $\psi_p$ to $(L_{\sigma_1(p)}\times \D^\perp_{\sigma_1(p)},\{L_{\sigma_1(p)}\times \mathcal{L}_v\mid \mathcal{L}_v\in \fol^{\sigma_1(p)}\})$. Assume that $\dime(L_{\sigma_1(p)}) = n-1$. Then the infinitesimal foliation $(\D^\perp_{\sigma_1(p)},\fol^{\sigma_1(p)})$ is given by the cone of the $\T^1$-action on $\Sp^2$, and the leaves of the infinitesimal foliation correspond to the orbits of the action. Thus any  principal leaf is identified via $\psi_{\sigma_1(p)}$ with $L_{\sigma_1(p)}\times \mathcal{L}_v$, where $\mathcal{L}_v$ is a circle. The space of directions of $M^\ast_1$ at $p^\ast$ is given by $\Sp_p^\perp/\fol^p$ and thus it is isomorphic to $[0,\pi]$. Observe that as we approach one of the boundary points we are shrinking the circle orbit $\mathcal{L}_v$ to a point. Recall that as defined, the weight at $L_{\sigma_1(p)}$ lets us identify the infinitesimal leaf $\mathcal{L}_v$ with a circle in $ L^1_0$ which we also denote by $\mathcal{L}_v$. From this it follows that by our assumption that over $\phi^\ast(p^\ast)$ we have the same weight, then $\phi_0$ maps $\mathcal{L}_v\subset L^1_0$ to the fiber of the fibration $L_0^2\to L_{\phi^\ast(p^\ast)}$. That is $\Phi_0 =\psi_{\phi^\ast(p^\ast)}\circ\phi_0\circ\psi_{p^\ast}^{-1}$ maps the principal leaves of the infinitesimal foliation at $\sigma_1(p^\ast)$ to the principal leaves of the infinitesimal foliation at $\sigma_2\circ\phi^\ast(p^\ast)$. Thus $\Phi_0|_{L_{\sigma_1(p^\ast)}\times \mathcal{L}_{v}}$ factors as a product of homeomorphisms over the principal part of the tubular neighborhood: $\mathrm{Tub}^\varepsilon(L_{\sigma_1(p^\ast)})\cap (M_1)_{\prin}$. From the fact that the missing singular leaves of the infinitesimal foliation are two points we see that we can continuously extend $\phi$ to the stratum of leaves of dimension $(n-1)$.

Assume now that $\dime(L_{\sigma_1(p^\ast)})=n-2$. Then the infinitesimal foliation $(\Sp_{\sigma_1(p^\ast)}^\perp, \fol^{\sigma_1(p^\ast)})$ is given by the linear $\T^2$-action over $\Sp^3$. Thus the space of directions at $p^\ast$ and $\phi^\ast(p^\ast)$ is isometric to $[0,\pi/2]$.  The end points of the space of directions correspond to singular leaves $q_1^\ast$ and $q_2^\ast$ of dimension $(n-1)$. Observe that the fiber of the projection of the principal leaf $L_{\sigma_1(p^\ast)}\times\mathcal{L}_v$ to the singular leaf $L_{\sigma_1(q_j^\ast)}$ is a circle in $\mathcal{L}_v$ corresponding to the circle $\mathcal{L}_{v_j}$ in $L_0^1$ determined by the weight of the edge containing $q_j^\ast$. That is $\mathcal{L}_{v} = \mathcal{L}_{v_1}\times  \mathcal{L}_{v_2}$.  Moreover since the the homeomorphism $\Phi_0$ is weight preserving, it sends the each factor $\mathcal{L}_{v_j}$ in $L^1_0$ to the corresponding circle in $L^2_0$. Thus we conclude that $\Phi_0$ splits as product over the principal leaves $L_{\sigma_1(p^\ast)}\times\mathcal{L}_v = L_{\sigma_1(p^\ast)}\times(\mathcal{L}_{v_1}\times \mathcal{L}_{v_2})$ in  a small tubular neighborhood of $L_{\sigma_1(p^\ast)}$. Therefore we can extend $\phi$ in a continuous way to the strata of $(n-2)$-dimensional leaves, and thus we get a  well defined foliated homeomorphism $\phi\colon (M_1,\fol_1)\to (M_2,\fol_2)$ lifting $\phi^\ast$.
\end{proof}

\subsection{Comparison of \texorpdfstring{$A$}{A}-foliations of codimension two to torus actions}

In this section we give a more detailed analysis of the weights of a singular $A$-folaition of codimension $2$ on a simply-connected manifold and compare them to the weights defined for group actions in \cite{Orlik1970}, \cite{Orlik1974}, \cite{Oh1983}. This allows us to compare singular $A$-foliations of codimension $2$ on simply-connected manifolds to torus actions.

We start by determining the number of weights such a foliation has. From the proof of Theorem~A in \cite{Galaz-Garcia2015}, we are able to determine the number $r$ of different bundles of the form 
\begin{equation}
	\Sp^1 \to T^n \to  T^{n-1},\label{least singular}
\end{equation}
for an $A$-foliation of codimension two on a compact, simply-connected manifold, coming from \eqref{EQ: Leaf Fibration}.

\begin{thm}\label{T: number of edges is greater than n}
Let $(M,\fol)$ be a compact, simply-connected $(n+2)$-manifold with an $A$-foliation of codimension two, and $L_0$ a regular leaf of dimension $n$. If the leaf space $M^\ast$ has $r$-edges in the boundary, then $r\geqslant n$.
\end{thm}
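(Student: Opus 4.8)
The plan is to compute $\pi_1(M)$ in terms of the edge weights and then invoke the simple-connectivity of $M$. Since the foliation is non-regular, $M^\ast$ is a $2$-disk whose interior consists of principal leaves and whose boundary circle is the union of the $r$ edges $\gamma_i$ together with the vertices $F_i$ (Figure~\ref{F: leaf space A-fol cod=2}). Writing $M_{\prin}=\pi^{-1}(\mathring{M^\ast})$, the restriction of $\fol$ is a fiber bundle $T^n\to M_{\prin}\to \mathring{M^\ast}$ over the contractible base $\mathring{M^\ast}$, so from the homotopy exact sequence (or from triviality of the bundle over a disk) we obtain $\pi_1(M_{\prin})\cong \pi_1(L_0)=\Z^n$. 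Each edge $\gamma_i$ carries, via fibration \eqref{EQ:circle fibration}, a weight $\alpha_i\in \pi_1(L_0)=\Z^n$, namely the class of the collapsing fiber circle $\Sp^1_i$. The goal is to show that these $r$ classes generate $\Z^n$, which forces $r\geqslant n$.

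Two observations set up the computation. First, the singular set $M\setminus M_{\prin}=\pi^{-1}(\partial M^\ast)$ has dimension $n$ (its top stratum consists of the $(n-1)$-dimensional leaves over the $1$-dimensional edges), hence codimension $2$ in $M$; therefore the inclusion $M_{\prin}\hookrightarrow M$ is surjective on $\pi_1$, and $\pi_1(M)$ is a quotient of $\Z^n$. Second, each weight $\alpha_i$ dies in $\pi_1(M)$: at a least singular leaf the infinitesimal foliation is $(\Sp^2,\Sp^1)$, so the fiber circle $\mathcal{L}_v=\Sp^1_i$ is a latitude on the normal sphere $\Sp^\perp_p=\Sp^2$, and the radial cone joining it to the center $p$ inside the normal slice $S_p$ is a disk bounding $\alpha_i$. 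Thus $\langle \alpha_1,\dots,\alpha_r\rangle$ is contained in the kernel of $\Z^n=\pi_1(M_{\prin})\twoheadrightarrow \pi_1(M)$.

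The main step is to show that this kernel is \emph{exactly} $\langle\alpha_1,\dots,\alpha_r\rangle$, which I would establish by van Kampen's theorem. Cover $M$ by $U=M_{\prin}$ together with saturated neighborhoods $V_i$ of the edge interiors and $V'_i$ of the vertices. By the local structure coming from the infinitesimal foliation $(\Sp^2,\Sp^1)$, each $V_i$ deformation retracts onto its singular leaf $T^{n-1}$, and the inclusion induces $\pi_1(U\cap V_i)=\Z^n\to\pi_1(V_i)=\Z^{n-1}$, which is the quotient map of the bundle $T^n\to T^{n-1}$ and whose kernel is precisely $\langle\alpha_i\rangle$. Similarly, using the infinitesimal foliation $(\Sp^3,\T^2)$ at a vertex, $V'_i$ retracts onto $T^{n-2}$ and the corresponding map kills exactly $\langle\alpha_i,\alpha_{i+1}\rangle$, introducing no relation beyond those already present. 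Since $\pi_1(U)=\Z^n$ is abelian, van Kampen yields $\pi_1(M)\cong \Z^n/\langle\alpha_1,\dots,\alpha_r\rangle$.

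Finally, simple-connectivity of $M$ gives $\langle\alpha_1,\dots,\alpha_r\rangle=\Z^n$, and since $\Z^n$ cannot be generated by fewer than $n$ elements we conclude $r\geqslant n$. I expect the main obstacle to be the van Kampen bookkeeping of the third paragraph: verifying that the vertex neighborhoods and the overlaps between the edge and vertex pieces contribute no relations beyond the weights $\alpha_i$, and confirming the claimed deformation retractions together with the identification of the induced $\pi_1$-maps from the homogeneous infinitesimal models. This parallels, in the foliated setting, Oh's argument that effectiveness forces the isotropy weights of a cohomogeneity-two torus action to generate the torus, so that a legal weight system requires at least $n$ edges.
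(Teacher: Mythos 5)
Your argument is correct, but it takes a genuinely different route from the paper's. The paper does not decompose $M$ at all: it works with the regular part $M_0=M_{\reg}$ and its Haefliger classifying space $B=B(M_0^\ast)$, uses the fibration $L_0\to M_0\to B$ and the exact sequence $\pi_2(B)\to\pi_1(L_0)\to\pi_1(M_0)\to\pi_1(B)\to 1$, and then quotes the proof of Theorem~A of \cite{Galaz-Garcia2015}: $\pi_1(L_0)$ is generated by $H$ (the image of $\pi_2(B)$) together with the subgroup $K$ generated by the $r$ collapsing-fiber classes, where $K$ splits as an abelian group plus a finite $2$-step nilpotent $2$-group. Since $M^\ast$ is a $2$-disk, $H=0$; since $L_0$ is a torus, the nilpotent summand vanishes; hence the $r$ weights generate $\pi_1(L_0)=\Z^n$, forcing $r\geqslant n$. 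You reach the same pivotal fact --- the weights generate $\Z^n$ --- by a direct Seifert--van Kampen computation $\pi_1(M)\cong\Z^n/\langle\alpha_1,\dots,\alpha_r\rangle$, the foliated analogue of Oh's homogeneous argument, as you note. What your route buys is elementarity and self-containedness: its inputs (trivial holonomy of all leaves, the infinitesimal models $(\Sp^2,\Sp^1)$ and $(\Sp^3,\T^2)$, triviality of the $T^n$-bundle over the open disk) are all established in the paper before this theorem, with no appeal to the internals of another paper's proof. Its cost is the bookkeeping you flag, and be aware that one piece of it is load-bearing rather than cosmetic: you must verify that at a vertex the image of $\pi_1(\T^2)$ in $\pi_1(T^n)$ is exactly $\langle\alpha_i,\alpha_{i+1}\rangle$. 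If the vertex pieces could kill rank-two subgroups unrelated to the adjacent edge weights, simple connectivity would only yield that roughly $3r$ elements generate $\Z^n$, i.e.\ $3r\geqslant n$, and the theorem would not follow. The identification does hold --- in $(\Sp^3,\T^2)$ the two singular circles are precisely the collapsing fibers of the two adjacent edge fibrations, and these two coordinate circles generate $\pi_1$ of a principal $\T^2$ leaf --- so your proof closes; the paper sidesteps this local analysis entirely, since the generation statement it imports from \cite{Galaz-Garcia2015} already accounts for all singular strata, at the price of relying on the Haefliger classifying-space machinery.
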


\begin{proof}
We first note that for $A$-foliations of codimension two, a regular leaf is a principal leaf, and fix $p_0\in L_0$. We consider $M_0=M_{\reg}$,  and $B = B(M_0^\ast)$ the Haefliger classifying space of $M_0^\ast$. Recall that there is a fibration $M_0\to B$ with fibers homeomorphic to a principal leaf $L_0$ (see for example \cite[Corollary~5.2]{Alexandrino2007}, \cite[Section~4]{Haefliger84} \cite{FloritGoertschesLytchak2015}, \cite[Section~2.5]{Galaz-Garcia2015}, \cite[Theorem~4.26]{Moerdijk}, \cite[Theorem~10.1]{Tondeur}, \cite[Proposition~2.4]{FloritGoertschesLytchak2015}). Then we obtain the following long exact sequence:
\[
	\cdots\to \pi_2(B,b_0)\to \pi_1(L_0,p_0)\to \pi_1(M_0,p_0)\to \pi_1(B,b_0)\to 1.
\]
By taking $H$ to be the image of $\pi_2(B,b_0)$ under the group morphism \linebreak
$\pi_2(B,b_0)\to \pi_1(L_0,p_0)$, we obtain the following short exact sequence:
\[
	0\to H\to \pi_1(L_0,p_0)\to \pi_1(M_0,p_0)\to \pi_1(B,b_0)\to 1.
\]
Using the fact that for an $A$-foliation of codimension two on a compact, simply-connected manifold, the leaf space is a $2$-disk, we conclude that $H=0$. Consider the fibers of the fibrations given by the codimension $3$ leaves. I.e. we consider the fibers of the fibrations of the from \eqref{least singular}. Observe that by hypothesis, there are $r$ of these fibrations. We consider their homotopy class in $L_0$ and denote by $K$ the subgroup they generate in $\pi_1(L_0,p_0)$. It follows from the proof of in \cite[Theorem~A]{Galaz-Garcia2015}  that $\pi_1(L_0,p_0)$ is generated by $K$ and $H$. Furthermore $K$ splits as an abelian group and a finite $2$ step nilpotent $2$-group. Since by  \cite[Theorem~B]{Galaz-Garcia2015} the leaf $L_0$ is homeomorphic to a torus, we conclude that the finite $2$ step nilpotent $2$-group is trivial. Thus from this discussion it follows that there are at least $n$ fibrations of the form \eqref{least singular}.
\end{proof}

Recalling \cite[Lemma~1.4]{Oh1983}, since the fibers of the fibrations of the form \eqref{least singular} generate a the fundamental group of a principal leaf, we deduce the following property of the weights $(a_{i1},\ldots,a_{in})\in \Z^n$, associated to the least singular leaves.

\begin{lem}\label{L: determinant of n-circles must be one}
For an $A$ foliation of codimension $2$, the determinant of the weights $\{\bar{v}_1\ldots,\bar{v}_m\}$ is $\pm 1$.
\end{lem}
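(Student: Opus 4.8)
The plan is to realise each weight $a_i=(a_{i1},\dots,a_{in})$ as an explicit primitive homotopy class in $\pi_1(L_0)\cong\Z^n$ and then to extract $n$ of these classes forming a $\Z$-basis. First I would recall that, by construction, $a_i$ is the image of a generator of $\pi_1(\Sp^1_i)\cong\Z$ under the fibre inclusion of \eqref{least singular}, transported to $\pi_1(L_0)$ by the isomorphism $(h_\gamma)_\ast$ coming from a path in $M_{\prin}^\ast$ (this transport is path-independent by Corollary~\ref{C: homeos between principal leafs are homotopic}). The homotopy long exact sequence of $\Sp^1_i\hookrightarrow T^n\to T^{n-1}$ reduces to the short exact sequence $0\to\Z\to\Z^n\to\Z^{n-1}\to 0$ with free quotient, so the fibre inclusion is split injective and each $a_i$ is therefore a \emph{primitive} vector of $\Z^n$.

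Second, I would record that the classes $a_1,\dots,a_r$ generate $\pi_1(L_0)\cong\Z^n$. This is precisely what the proof of Theorem~\ref{T: number of edges is greater than n} delivers: there $\pi_1(L_0)$ is generated by the subgroups $K$ and $H$, one has $H=0$, and $K$ is by definition generated by the fibre classes of the fibrations \eqref{least singular}; since $L_0$ is a torus these classes generate all of $\Z^n$.

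The crux, and the step I expect to be the main obstacle, is the passage from ``the primitive vectors $a_i$ generate $\Z^n$'' to ``some $n$ of them have determinant $\pm1$,'' because generation together with primitivity is \emph{not} enough on its own (already in $\Z^2$ three primitive vectors can generate the lattice while no two of them are unimodular). What rescues the argument is the local model at the vertices of $M^\ast$: at each vertex $F_i$ the most singular leaf $T^{n-2}$ has infinitesimal foliation the homogeneous foliation $(\Sp^3,\T^2)$, so the two edge circles $\Sp^1_i,\Sp^1_{i+1}$ are identified with the two singular circles of the standard $\T^2$-action on $\Sp^3$; these span a rank-two direct summand of $\Z^n$, forcing consecutive weights $a_i,a_{i+1}$ to form a unimodular pair. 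Feeding the cyclic family of these local unimodularity relations together with the global generation into the combinatorial argument of \cite[Lemma~1.4]{Oh1983}, I would extract indices $i_1<\dots<i_n$ with $\det(a_{i_1},\dots,a_{i_n})=\pm1$. This is the assertion of the lemma, and it is exactly the \emph{legal weights} condition required to match the foliation against Oh's equivariant classification.
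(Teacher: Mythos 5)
Your route is, in outline, the paper's own: the paper's entire proof of this lemma consists of the generation statement (the fibre classes of the bundles \eqref{least singular} generate $\pi_1(L_0)\cong\Z^n$, extracted from the proof of Theorem~\ref{T: number of edges is greater than n}) followed by a citation of \cite[Lemma~1.4]{Oh1983}. Your preliminary steps reproduce exactly this, and they add two correct refinements that the paper leaves implicit: each $a_i$ is primitive (split exactness of $0\to\pi_1(\Sp^1_i)\to\pi_1(L_0)\to\pi_1(T^{n-1})\to 0$, the quotient being free), and at each vertex the two adjacent weights form a basis of a rank-two direct summand of $\Z^n$ (from the fibration $T^2\to L_0\to T^{n-2}$ attached to the most singular leaf, whose infinitesimal foliation is $(\Sp^3,\T^2)$). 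You are also right that generation plus primitivity alone is insufficient.

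However, your self-identified crux step has a genuine gap: you assert that generation together with the cyclic family of adjacency conditions can be fed into a purely combinatorial argument to produce $n$ weights of determinant $\pm1$. No combinatorial argument can do this, because the implication is false at the level of lattices. For $n=3$ take the four cyclically ordered primitive vectors
\[
a_1=(1,0,0),\qquad a_2=(0,1,0),\qquad a_3=(1,0,2),\qquad a_4=(3,2,3).
\]
Every cyclically adjacent pair spans a rank-two direct summand of $\Z^3$ (for each adjacent pair the three $2\times2$ minors have greatest common divisor $1$), and the four vectors generate $\Z^3$ (the lattice they span contains $a_3-a_1=(0,0,2)$ and $a_4-3a_1-2a_2=(0,0,3)$, hence $(0,0,1)$); yet the four triples have determinants $2$, $3$, $-4$, $3$, so no three of them form a basis. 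Thus adjacency plus generation is strictly weaker than legality, and the extraction of a unimodular subfamily cannot be a formal consequence of the data you assemble: it must use topological information about the simply connected total space beyond $\pi_1=0$ and the local vertex models. That information is exactly what the citation of \cite[Lemma~1.4]{Oh1983} has to carry — in your argument just as in the paper's one-sentence proof — and since Oh's lemma is stated for smooth torus actions, one must additionally check that its proof applies verbatim to the foliated setting (at this point of the paper the foliation is not yet known to be homogeneous, so this is not automatic). In short: your added structure is correct but does not close the crux; the proposal ultimately rests on the same black-box citation as the paper, and the step you flag as the main obstacle is indeed where the real content lies.
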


Now we are ready to prove the main result for this section.

\begin{thm}\label{t: Fol homeom to homogen}
Let  $(M,\fol_1)$ be  closed, simply-connected $(n+2)$-manifold  with an $A$-foliation of codimension $2$ and $n\geqslant 2$. Then there exist a closed, simply-connected $(n+2)$-manifold $(N,\fol_2)$ with a homogeneous $A$-foliation of codimension $2$ (i.e. with an effective smooth torus action of cohomogeneity $2$), such that $(N,\fol_2)$ is foliated homeomorphic to $(M,\fol_1)$.
\end{thm}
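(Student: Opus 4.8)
The plan is to reduce the statement to Oh's realization and equivariant classification for cohomogeneity-two torus actions, using the weighted leaf space of $(M,\fol_1)$ as the bridge. Since $n\geqslant 2$, the foliation $\fol_1$ cannot be regular (a regular codimension-$2$ $A$-foliation on a compact, simply-connected manifold forces $M=\Sp^3$, i.e.\ $n=1$, by \cite[Theorem~E]{Galaz-Garcia2015}), so $M^\ast$ is a $2$-disk whose interior consists of principal leaves, whose boundary is a union of edges $\gamma_1,\ldots,\gamma_r$ with least singular leaves $T^{n-1}$ in their interiors, and whose vertices $F_i$ correspond to most singular leaves $T^{n-2}$. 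To each edge $\gamma_i$ the construction of the previous subsection assigns a weight $a_i=(a_{i1},\ldots,a_{in})\in\Z^n$, namely the image of a generator of $\pi_1(\Sp^1_i)$ under the map induced on fundamental groups by the fibration \eqref{least singular}. Since these foliations have no holonomy and the infinitesimal foliation at a most singular leaf is $(\Sp^3,\T^2)$, the data at each vertex $F_i$ is determined by the two adjacent edge weights, exactly as for the $\T^2$-isotropy vertices of a torus action.

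I would then verify that this weighted disk is \emph{legally weighted} in the sense of Oh. By Theorem~\ref{T: number of edges is greater than n} we have $r\geqslant n$, and Lemma~\ref{L: determinant of n-circles must be one} produces a sub-collection of $n$ weights whose $n\times n$ matrix has determinant $\pm1$, which is precisely Oh's legality condition. Invoking Theorem~\ref{t: Weights realized} for the legal family $(a_1,\ldots,a_r)$, I obtain a closed, simply-connected, smooth $(n+2)$-manifold $N$ carrying an effective cohomogeneity-two $\T^n$-action that realizes exactly this family as the weights of its orbit space. The orbit decomposition is a homogeneous foliation $\fol_2$ all of whose leaves (principal orbits $\T^n$, and singular orbits $T^{n-1}$, $T^{n-2}$) are closed tori, hence closed and aspherical; thus $(N,\fol_2)$ is a homogeneous $A$-foliation of codimension $2$.

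It remains to match the quotients and apply the classification. Both $M^\ast$ and the orbit space $N^\ast$ are $2$-disks whose boundaries carry the same cyclic sequence of weighted edges and vertices, so a weight-preserving homeomorphism between them exists; that is, the weighted leaf spaces of $(M,\fol_1)$ and $(N,\fol_2)$ are isomorphic. The principal strata of both foliations are the disk interiors and hence simply connected, so the weights are well defined and the hypotheses of the classification are met. Applying Theorem~\ref{t: codim A fol par by weights} (equivalently Theorem~\ref{T: weights classify the foliation}, whose cross-section hypothesis is supplied by Theorem~\ref{T: cross-section for codim 2 A-foliation}) produces a foliated homeomorphism $(M,\fol_1)\to(N,\fol_2)$, which is the assertion.

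The step I expect to be the main obstacle is the identification of the foliation-theoretic weights with Oh's torus-action weights. Oh defines the weight of an edge through the embedding $G(a_i)\hookrightarrow\T^n$ of the isotropy circle, i.e.\ via $\pi_1(G(a_i))\to\pi_1(\T^n)$, whereas here the weight arises from the leaf fibration $\Sp^1_i\to T^n\to T^{n-1}$ and the induced map on fundamental groups. Checking that these two integer vectors agree up to the sign and change-of-basis ambiguity already inherent in the definitions, and that the vertex data of $\fol_1$ reproduces the $G(a_i)\times G(a_{i+1})$ isotropy pattern, is what legitimizes passing between Theorem~\ref{t: Weights realized} and Theorem~\ref{t: codim A fol par by weights}; with this correspondence established, the remainder is a direct chain of citations.
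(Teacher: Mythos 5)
Your proposal is correct and follows essentially the same route as the paper: Lemma~\ref{L: determinant of n-circles must be one} shows the weights are legal in Oh's sense, Theorem~\ref{t: Weights realized} produces the model manifold $N$ with a cohomogeneity-two $\T^n$-action, and Theorem~\ref{t: codim A fol par by weights} yields the foliated homeomorphism. Your additional care (ruling out the regular case for $n\geqslant 2$, checking that the orbit foliation of $N$ is a codimension-$2$ $A$-foliation, and flagging the identification of the foliation-theoretic weights with Oh's isotropy weights) only makes explicit steps the paper leaves implicit.
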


\begin{proof}
By Lemma~\ref{L: determinant of n-circles must be one}, for an $A$-foliation of codimension two on a closed, simply-connected $(n+2)$-manifold $M$, the weights $(a_{i1},\ldots,a_{in})$ are legal weights in the sense of Oh (see \cite{Oh1983}). Thus by  Theorem~\ref{t: Weights realized}  there is a closed, simply-connected, $(n+2)$-manifold $N$ together with a $\T^n$-action realizing the weights. By Theorem~\ref{T: weights classify the foliation}, the manifolds $M$ and $N$ are foliated homeomorphic.
\end{proof}

\section{Smooth structure of the leaves of an \texorpdfstring{$A$}{A}-foliation of codimension two}\label{S: smooth structure of leaves}

In order to be able to prove Theorem~\ref{T: Codimn 2 A foliation is diffeo to homogeneous fol} we need to study the smooth structure of the leaves of a singular $A$-foliation $(M,\fol)$ of codimension two, on a compact, simply-connected Riemannian $(n+2)$-manifold $M$. We  also show that we can find a smooth cross-section $\sigma\colon M^\ast\to M$ for the quotient map. With these two remarks we are able to strengthen the conclusion in Theorem~\ref{t: Fol homeom to homogen} from foliated homeomorphism to foliated diffeomorphism.

\subsection{Smooth structure of leaves}

Recall that for  an $A$-foliation $(M,\fol)$ of codimension two on a simply-connected  $(n+2)$-manifold,  the least singular leaves are singular leaves of codimension $3$ in $M$, homeomorphic to an $(n-1)$-torus. The most singular leaves of $(M,\fol)$ are singular leaves of codimension $4$ in $M$, homeomorphic to an $(n-2)$-torus. 

We also recall,  that when we fix a principal leaf $L_0$, if we denote a singular leaf by $L_i$, we have a smooth fiber bundle:
\begin{equation}\label{EQ: fiber bundle given by tubular neighborhood}
	\mathcal{L}_i\to L_0\to L_i
\end{equation}
with $\mathcal{L}_i$ diffeomorphic to $T^1$, if $L_i$ is a least singular leaf;  or $T^2$, if $L_2$ is a most singular leaf. Furthermore, for the least singular leaf case, this bundle is a principal circle bundle. Thus for each edge of $M^\ast$ we have a circle action on $L_0$, $\mu_i\colon \T^1\times L_0\to L_0$. 

\begin{lem}\label{L: circle commute}
The circle actions $\mu_i\colon \T^1\times L_0\to L_0$ commute.
\end{lem}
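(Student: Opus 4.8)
The plan is to realize every $\mu_i$ as a subcircle of a single torus acting on $L_0$, after which commutativity is automatic. Recall first that by Proposition~\ref{C: S1 bundles of A-fol are principal} each $\mu_i$ is the free action along the fibres of a principal $\Sp^1$-bundle $\Sp^1_i\to L_0\to L_i$ with $L_0\cong T^n$ and $L_i\cong T^{n-1}$. Since the total space is a torus, the Euler class in $H^2(T^{n-1};\Z)$ must vanish (a nonzero class would make $\pi_1(L_0)$ a nonabelian nilpotent central extension, contradicting $L_0\cong T^n$), so the bundle is equivariantly trivial and each $\mu_i$ is smoothly conjugate to a linear translation of $T^n$ whose orbit represents the primitive class $a_i\in\pi_1(L_0)=\Z^n$; by Lemma~\ref{L: determinant of n-circles must be one} a suitable $n$ of these classes form a $\Z$-basis. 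Thus the obstruction is \emph{not} that the individual actions fail to be linear, but that a priori the linearizing diffeomorphisms differ from edge to edge.

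First I would treat two edges $\gamma_i,\gamma_{i+1}$ meeting at a vertex $F$ of $M^\ast$. The corresponding most singular leaf $L_F\cong T^{n-2}$ has infinitesimal foliation the homogeneous foliation $(\Sp^3,\T^2)$, and since the singular leaves of a codimension-two $A$-foliation on a simply-connected manifold carry no holonomy, a tubular neighbourhood $U$ of $L_F$ is foliated diffeomorphic to its homogeneous linear model. In particular $U$ carries a genuine smooth $\T^2$-action whose orbits, together with the $T^{n-2}$ directions of $L_F$, are the leaves of $\fol|_U$. Restricting this $\T^2$ to a principal leaf $L_q\subset U$ yields a free $\T^2=\Sp^1\times\Sp^1$-action whose two circle factors are precisely the fibre actions of the bundles \eqref{EQ: fiber bundle given by tubular neighborhood} attached to $\gamma_i$ and $\gamma_{i+1}$. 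Hence $\mu_i$ and $\mu_{i+1}$ commute on $L_q$, and transporting $L_q$ back to the fixed leaf $L_0$ by the (smooth) holonomy diffeomorphism of Corollary~\ref{C: homeos between principal leafs are homotopic} conjugates both actions simultaneously, so they commute on $L_0$ as well.

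The main obstacle is upgrading this to commutation of \emph{all} pairs, including non-adjacent edges, for which no single most singular leaf provides a local $\T^2$ model. This does not follow formally, since cyclic nearest-neighbour commutation relations generate a nonabelian right-angled Artin group; one must exploit the rigidity of free circle actions on a torus already established above. The route I would take is to linearize the family simultaneously: starting from the spanning subfamily of Lemma~\ref{L: determinant of n-circles must be one}, one fixes an identification $L_0\cong\R^n/\Z^n$ in which those actions are the standard translations $x\mapsto x+s\,a_i$, and then shows that any further $\mu_j$, commuting with its neighbours, preserves their orbit fibrations, descends to the quotient torus, and—because its flow must close up in period one, which pins down the linear fields among those invariant under a translation—is itself a translation in the \emph{same} affine structure. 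Propagating this consistently around the whole boundary cycle then exhibits every $\mu_i$ as a translation, so they all commute and generate a $\T^n$ acting freely and transitively on $L_0$. (Equivalently, once one knows the subgroup of $\Diff(L_0)$ generated by the $\mu_i$ has compact identity component, averaging a metric makes them commuting Killing fields, and $\Iso(T^n)^0$ is the abelian translation group; but establishing that compactness is essentially equivalent to the commutation sought, which is why I expect the consistent simultaneous linearization around the cyclic family, rather than merely along adjacent edges, to be the crux of the argument.)
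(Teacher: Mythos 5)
Your first step already overreaches: you assert that each $\mu_i$ is ``smoothly conjugate to a linear translation of $T^n$''. At this point in the paper's logical order the smooth structure of $L_0$ (and of the quotients $L_i$) is \emph{not} known to be standard --- standardness of the principal leaf is a corollary that the paper derives \emph{from} Lemma~\ref{L: circle commute}, so it cannot be fed into its proof. The vanishing of the Euler class does give smooth triviality of each principal bundle, i.e.\ $\mu_i$ is conjugate to rotation of the $\Sp^1$-factor of $\Sp^1\times L_i$, but nothing more; if $L_0$ carried an exotic smooth structure there would be no ``linear'' model at all. The same objection hits your adjacent-edge argument: absence of holonomy only says the foliation in a tube around a most singular leaf $L_F$ is assembled fibrewise from the infinitesimal foliation $(\Sp^3,\T^2)$; it does not give a foliated \emph{diffeomorphism} of the tube onto a homogeneous linear model, nor a genuine $\T^2$-action on it --- the foliated disk bundle over $L_F$ need not be trivial or linear, and $L_F$ itself is not yet known to be a standard torus.

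The second, decisive, gap is the globalization, which you correctly flag as the crux but then leave circular: to ``fix an identification $L_0\cong\R^n/\Z^n$ in which the spanning subfamily acts by the standard translations $x\mapsto x+s\,a_i$'' is precisely to assert that those $n$ circle actions commute and generate a free $\T^n$-action, which is the statement being proved (simultaneous linearizability and commutativity are equivalent here; pairwise non-commuting free circle actions in the right homotopy classes cannot share one affine structure). The paper's proof avoids all of this intrinsic analysis. Theorem~\ref{t: Fol homeom to homogen} is already available at this stage --- its proof uses only the weight classification, not this lemma --- so there is a foliated homeomorphism $\phi$ from $(M,\fol_1)$ to a homogeneous model $(N,\T^n)$. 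The paper checks that $\phi$ restricts on $L_0$ to a homeomorphism onto the principal orbit $\T^n$ that is equivariant for each $\mu_i$, intertwining it with a circle subgroup of $\T^n$; since conjugation by a bijection preserves commutation relations and the circle subgroups of $\T^n$ commute, the $\mu_i$ commute. Note that a mere homeomorphism suffices for this transfer, which is exactly why the lemma can then be used afterwards to pin down the smooth structure of the leaves.
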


\begin{proof}
We recall that the fiber bundle \eqref{EQ: fiber bundle given by tubular neighborhood} arises via  the intersection of the foliation $\fol$ with the normal space of the leaf $L_1$.

In the particular case for a homogeneous foliation $(N, \T^n)$, the principal circle bundles \eqref{EQ: fiber bundle given by tubular neighborhood} are given by circle subgroups of the principal leaf $\T^n$. This implies in the homogeneous case, that each of these  bundles are trivial, and thus admit a cross-section.

It follows from Theorem~\ref{t: Fol homeom to homogen} that  the given $A$-foliation $(M,\fol)$   is foliated homeomorphic to a homogeneous foliation $(N,\T^n)$, via a homeomorphism $\phi$. In particular, a tubular neighborhood of a singular leaf $L_1$ of $(M,\fol)$ is foliated homeomorphic to a tubular neighborhood of a singular orbit in $N$, via $\phi$. This implies that the foliated homeomorphism induces an fiber bundle isomorphism between the bundles $\mathcal{L}_i\to L_0\to L_i$ and the  bundles $\T^1\to \T^n\to \T^n/\T^1$. Thus, for each fiber bundle \eqref{EQ: fiber bundle given by tubular neighborhood} the homeomorphism $\phi$ makes the following diagram commute:
\begin{center}
\begin{tikzcd}
	L_0 \arrow[d] \arrow[r, "\phi"] & \T^n\arrow[d]\\
	L_i \arrow[r,"\phi"] & \T^{n-1}
\end{tikzcd}
\end{center}
From this, it follows that the cross-section for the bundle $\T^n\to \T^{n-1}$ gives rise to a cross-section for the bundle $L_0\to L_i$. This implies that the bundles \eqref{EQ: fiber bundle given by tubular neighborhood} are trivial for each $i$. In particular, from this it turns out that $\phi\colon L_0\to \T^n$ is equivariant with respect to each action $\mu_i$. 
 
Observe that for the homogeneous case, the circle actions on the principal leaf $\T^n$ commute. Then from the fact that $\phi$ is equivariant, it follows  that the actions $\mu_i$ commute.
\end{proof}

From the previous lemma, plus the fact that the actions are smooth, we obtain the following corollary.

\begin{cor}
Let $(M,\fol)$ be an $A$-foliation of codimension two on a compact simply-connected $(n+2)$-manifold. Then the principal leaf $L_0$ is diffeomorphic to the standard torus $\T^n$.
\end{cor}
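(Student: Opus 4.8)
The plan is to assemble the commuting circle actions into a single smooth torus action and then read off its freeness and transitivity from the determinant condition on the weights. First I would invoke Lemma~\ref{L: circle commute} to conclude that the smooth circle actions $\mu_i\colon\T^1\times L_0\to L_0$ pairwise commute, and Lemma~\ref{L: determinant of n-circles must be one} to select $n$ of them, say $\mu_{i_1},\dots,\mu_{i_n}$, whose associated weight vectors $a_{i_1},\dots,a_{i_n}\in\pi_1(L_0)\cong\Z^n$ form a basis of the lattice (this is exactly what determinant $\pm1$ means). Because these finitely many actions are smooth and commute, the assignment
\[
\Phi\bigl((t_1,\dots,t_n),x\bigr)=\mu_{i_1}(t_1,\cdot)\circ\cdots\circ\mu_{i_n}(t_n,\cdot)(x)
\]
is a well-defined smooth action $\Phi\colon\T^n\times L_0\to L_0$ of the standard $n$-torus.

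Next I would study the orbit map. Fix $x_0\in L_0$ and set $o(g)=\Phi(g,x_0)$; this is smooth, and on fundamental groups $o_\ast\colon\pi_1(\T^n)=\Z^n\to\pi_1(L_0)=\Z^n$ sends the $j$-th standard generator to the weight $a_{i_j}$. Since these weights form a basis, $o_\ast$ is an isomorphism. Both $\T^n$ and $L_0$ are closed aspherical $n$-manifolds (recall that $L_0$ is homeomorphic to a torus by \cite[Corollary~B]{Galaz-Garcia2015}), so a map between them inducing an isomorphism on $\pi_1$ is a homotopy equivalence, and in particular has degree $\pm1$ and is therefore surjective. Hence the action $\Phi$ is transitive. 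Transitivity writes $L_0=\T^n/\T^n_{x_0}$, and comparing dimensions forces the isotropy $\T^n_{x_0}$ to be finite; then $o$ is a finite covering, so the index of $o_\ast(\pi_1(\T^n))$ in $\pi_1(L_0)$ equals the order of $\T^n_{x_0}$. As $o_\ast$ is an isomorphism this order is $1$, the action is free and transitive, and hence $o$ is a $\T^n$-equivariant diffeomorphism $\T^n\to L_0$. Thus $L_0$ is diffeomorphic to the standard smooth torus.

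An alternative, even shorter endgame uses the fact quoted in Section~\ref{S: torus actions} that a smooth effective torus action has trivial principal isotropy: once one knows $\Phi$ is effective, its principal orbit is an embedded copy of $\T^n$, which is open (dimension $n$) and closed (compact) in the connected manifold $L_0$, hence all of $L_0$, so freeness and transitivity follow at once. In either route the genuinely load-bearing step is establishing \emph{effectiveness}, i.e.\ that the commuting circles do not collapse onto a lower-dimensional torus. This is precisely where the legal-weight condition of Lemma~\ref{L: determinant of n-circles must be one} enters: it guarantees that $o_\ast$ is a $\pi_1$-isomorphism, which together with the asphericity of $L_0$ rules out any nontrivial kernel. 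The remaining passage from a free transitive smooth torus action to the standard smooth structure is then formal, and I expect no difficulty there.
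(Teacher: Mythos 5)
Your proposal is correct, and its skeleton coincides with the paper's: both invoke Lemma~\ref{L: circle commute} to assemble the commuting circle actions $\mu_{i_1},\dots,\mu_{i_n}$ into a smooth $\T^n$-action on $L_0$, and both conclude by exhibiting a free (hence, by dimension and compactness, transitive) smooth torus action. Where you genuinely differ is in how freeness is certified. The paper selects the indices $i_1,\dots,i_n$ by transporting everything to the homogeneous model through the foliated homeomorphism of Theorem~\ref{t: Fol homeom to homogen}, and then asserts that $\mu$ is free ``since each of the transformations $\mu_{i_j}$ is free'' --- an implication that is false for general commuting free circle actions (two copies of the \emph{same} free circle action combine to a $\T^2$-action with one-dimensional stabilizers), so the paper's freeness claim silently rests on the chosen circles representing independent generators of $\pi_1(L_0)$. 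Your argument makes exactly that point explicit and load-bearing: you use Lemma~\ref{L: determinant of n-circles must be one} to choose circles whose weight vectors form a lattice basis, deduce that the orbit map $o\colon\T^n\to L_0$ induces a $\pi_1$-isomorphism, get transitivity from asphericity and the degree-$\pm1$ argument, and then get freeness from the covering-space index count. What your route buys is a self-contained and fully rigorous verification that does not re-invoke the homogeneous comparison at this step and that repairs the paper's too-quick freeness assertion; what the paper's route buys is brevity, at the cost of leaving the crucial independence-of-generators input implicit.
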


\begin{proof}
We show that there exists a free smooth $\T^n$-action on the principal leaf $L_0$ of the foliation. Thus for $n\geq 2$, the principal leaf of an $A$-foliation $(M,\fol)$ of codimension two on a compact, simply-connected  $(n+2)$-manifold is diffeomorphic to the standard torus $\T^n$.

From the homogeneous case we know that there exists a set of indices $\{i_1,i_2,\ldots,i_n\}$ such that circle subgroups, given by the fibrations \eqref{EQ: fiber bundle given by tubular neighborhood} defined by the indexes $i_\ell$ generate the principal torus $\T^n$. Using the foliated homeomorphism $\phi$ given by Theorem~\ref{t: Fol homeom to homogen}, and Lemma~\ref{L: circle commute}, we know  we  define the $\T^n$-action  $\mu\colon \T^n \times L_0 \to L_0$ on the principal leaf $L_0$ as
\begin{linenomath}
\[
\mu((\xi_{1},\ldots,\xi_n),p) = \mu_{i_1}(\xi_{1},\mu_{i_2}(\xi_{2}, \cdots ,\mu_{i_n}(\xi_{n}(p))\cdots)).
\]
\end{linenomath}
Since the actions $\mu_{i_j}$ commute $\mu$ gives a continuous action of the standard $n$-torus, $\T^n$, on the principal leaf $T^n$. Furthermore, the action $\mu$ is free and smooth since each of the transformations $\mu_{i_j}$  are free and smooth.
\end{proof}

\begin{remark}
We note that we  have exactly $r$ of these bundles. One for each edge in $\partial M^\ast$. The index $i$ on the fiber is added to be able to distinguish the edge we are referring to. 
\end{remark}

\begin{remark}[Dimension $6$]
Consider  an $A$ foliation $(M,\fol)$ of codimension two on a simply-connected $n$-manifold, with $n\leq 5$. Let $L_0$ be a fixed principal leaf. Recall that  the leaf $L_0$ is homeomorphic to an $(n-2)$-dimensional torus.  Since in this case all leaves have dimension less than $3$, then the smooth structure is unique. So we get the conclusions of Theorem~\ref{T: Codimn 2 A foliation is diffeo to homogeneous fol}.
\end{remark}

\begin{remark}
For the case when $M$ is of dimension $6$, we recall that, from a tubular neighborhood of a most singular leaf $L_p$ we have a smooth $2$-torus bundle over $L_p$ with total space $L_0$. In this case $L_0$ is homeomorphic to a $4$-torus. These bundles were classified in \cite{Fukuhara83}, and Ue showed in \cite{Ue90} that they admit a geometric structure in the sense of Thurston (c.f.\ \cite{Scott1983}). From the explicit list given in \cite{Geiges92}, we see the total space $L_0$ admits an Euclidean geometry. This implies that the leaf $L_0$ admits a flat Riemannian metric (possibly different from the one given by $M$). It follows from Theorem~3 in \cite{FarrellOntaneda} that $L_0$ is diffeomorphic to the standard $4$-torus. The other  leaves have dimension less or equal to $3$, and thus a unique smooth structure. With these observations we have an alternative proof of Theorem~\ref{T: Codimn 2 A foliation is diffeo to homogeneous fol}.
\end{remark}

Now we prove that  also the singular leaves of an $A$-foliation of codimension two on a compact, simply-connected manifold are diffeomorphic to standard tori. 

\begin{cor}\label{C: least singular leaf is standard}
The least singular leaf of an $A$-foliation $(M,\fol)$ of codimension two on a compact, simply-connected $(n+2)$-manifold $M$ is diffeomorphic to the standard torus.
\end{cor}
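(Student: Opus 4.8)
The plan is to realize the least singular leaf $L_i$ as the quotient of the principal leaf $L_0$ by one of the commuting circle actions $\mu_i$, and then to transport the standard smooth structure on $L_0$ through this quotient. Concretely, I would show $L_i \cong \T^n/G(a_i)$ for the circle subgroup $G(a_i)$ determined by the weight $a_i$, and conclude using primitivity of $a_i$.

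First I would recall from the preceding corollary that the principal leaf $L_0$ carries a free smooth action $\mu\colon \T^n\times L_0\to L_0$. Since $L_0$ has dimension $n$ and the orbits of a free $\T^n$-action are $n$-dimensional closed (hence open) submanifolds, the single orbit through any point is both open and closed; as $L_0$ is connected, the action is transitive. Being free and transitive, $\mu$ makes $L_0$ into a $\T^n$-torsor, so $L_0$ is equivariantly diffeomorphic to the standard torus $\T^n$ acting on itself by translations.

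Next I would identify $L_i$ as a quotient of $L_0$. By construction the least singular leaf $L_i$ is the base of the principal circle bundle $\Sp^1_i\to L_0\to L_i$ of fibration \eqref{EQ: fiber bundle given by tubular neighborhood}, whose fibers are exactly the orbits of the circle action $\mu_i$; hence $L_i=L_0/\mu_i(\Sp^1)$. From the proof of Lemma~\ref{L: circle commute} the foliated homeomorphism $\phi\colon L_0\to \T^n$ to the homogeneous model is equivariant with respect to each $\mu_i$, so under the identification $L_0\cong \T^n$ the circle $\mu_i(\Sp^1)$ is carried to the linear circle subgroup $G(a_i)\subset \T^n$ determined by the weight $a_i=(a_{i1},\ldots,a_{in})$. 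Since $a_i$ is a primitive integer vector, it completes to a basis of $\Z^n$; in the corresponding coordinates $G(a_i)$ is a coordinate circle, so $\T^n/G(a_i)$ is the standard torus $\T^{n-1}$. Thus $L_i\cong \T^n/G(a_i)$ is diffeomorphic to the standard $(n-1)$-torus.

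The main obstacle is the step identifying $\mu_i(\Sp^1)$ with the \emph{linear} circle subgroup $G(a_i)$: one must ensure that, under the standard smooth structure on $L_0\cong\T^n$, the action $\mu_i$ is genuinely translation by a closed one-parameter subgroup, rather than merely an abstract free smooth circle action. This relies precisely on the equivariance of $\phi$ established in Lemma~\ref{L: circle commute} together with the fact that $\mu_i$ is one of the mutually commuting circle actions. Once $\mu_i$ is known to be such a linear subgroup, the verification that the quotient is the standard $(n-1)$-torus is routine.
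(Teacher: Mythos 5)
Your proposal is correct and takes essentially the same approach as the paper: there, too, the result is deduced from the fact that $\Sp^1_i\hookrightarrow L_0\to L_{p_i}$ is a principal circle bundle whose total space is, by the preceding corollary, the standard smooth torus $\T^n$, so that $L_{p_i}\cong \T^n/\Sp^1_i=\T^{n-1}$. Your extra care in verifying that $\mu_i$ acts by translations along a closed one-parameter subgroup (which follows from the commutation of the circle actions and the torsor structure, rather than from the merely topological equivariance of $\phi$) spells out a point the paper leaves implicit in the equality $\T^n/\Sp^1_i=\T^{n-1}$.
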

\begin{proof}
For the least singular leaf $L_{p_i}$ the claim follows from the fact that the fiber bundle \eqref{EQ: Leaf Fibration} is an $\Sp^1_i$-principal bundle, combined with the fact that the total space is the standard torus $\T^n$. Thus the least singular leaf $L_{p_i}$ is diffeomorphic to $\T^n / \Sp^1_i = \T^{n-1}$, i.e. the standard $(n-1)$-dimensional torus. 
\end{proof}

We recall that, if  $x_i$ is a point in $(M,\fol)$, so that $L_{x_i}^\ast$ is a vertex in $M^\ast$, then $L_{x_i}$ is a most singular leaf of $\fol$, and it is homeomorphic to $T^{n-1}$. Furthermore we can choose $p_i$ close enough to $x_i$ in $M$, such that $L_{p_i}$ is a least singular leaf. We point out that the leaf $L_{p_i}$ has trivial holonomy.  Thus for the leaves $L_{p_i}$ and $L_{x_i}$  fibration \eqref{EQ: Leaf Fibration} is a fiber bundle of the form:
\begin{linenomath}
\begin{equation}\label{E: least sing to most sing circle bundle}
	\Sp^1\hookrightarrow T^{n-1}\to T^{n-2}. 
\end{equation}
\end{linenomath}
By  the same arguments given for  the proof of Proposition~\ref{C: S1 bundles of A-fol are principal} we can prove that this bundle  is a principal $\Sp^1$-bundle. With theses remarks we prove the following proposition:

\begin{prop}
The most singular leaf of an $A$-foliation $(M,\fol)$ of codimension two on a compact, simply-connected $(n+2)$-manifold $M$ is diffeomorphic to the standard torus.
\end{prop}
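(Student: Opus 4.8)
The plan is to follow the argument of Corollary~\ref{C: least singular leaf is standard} one dimension lower. By the remark preceding the statement, the fibration \eqref{E: least sing to most sing circle bundle}, $\Sp^1\hookrightarrow T^{n-1}\to T^{n-2}$, is a principal $\Sp^1$-bundle whose total space is the least singular leaf; by Corollary~\ref{C: least singular leaf is standard} this total space is the \emph{standard} torus $\T^{n-1}$. Hence the most singular leaf is realized as the quotient of the standard torus $\T^{n-1}$ by a free smooth circle action, and the entire content of the statement is that this quotient is again the \emph{standard} torus $\T^{n-2}$.

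To establish this I would exhibit the acting circle as a circle \emph{subgroup}, so that the quotient is a subtorus quotient and is therefore standard by construction. Recall from the corollary on the principal leaf that $L_0\cong\T^n$ carries a free smooth $\T^n$-action assembled from the commuting circle actions $\mu_i$, and that the least singular leaf $L_{p_i}$ is the quotient $\T^n/\Sp^1_i$ by the circle subgroup $\Sp^1_i$ attached to the edge $\gamma_i$. The most singular leaf $L_{x_i}$ sits at the vertex $F_i$ where $\gamma_i$ and $\gamma_{i+1}$ meet, and the circle in \eqref{E: least sing to most sing circle bundle} is precisely the residual action on $\T^n/\Sp^1_i$ of the neighboring circle $\Sp^1_{i+1}$. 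By Lemma~\ref{L: circle commute} the actions $\mu_i$ and $\mu_{i+1}$ commute, so $\Sp^1_i$ and $\Sp^1_{i+1}$ together generate a subgroup $\Sp^1_i\cdot\Sp^1_{i+1}\leqslant\T^n$, and $L_{x_i}$ is the quotient of $\T^n$ by this subgroup.

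The one point that genuinely needs care is that $\Sp^1_i\cdot\Sp^1_{i+1}$ is an honest sub\emph{torus} $\T^2$ of $\T^n$, since only then is the quotient a standard torus. This I would read off from the fibration $T^2\to L_0\to L_{x_i}$ induced by the infinitesimal foliation $(\Sp^3,\T^2)$ at the most singular leaf: by Proposition~\ref{P: F->M->N F conn and M torus then N and F torus} its fiber $T^2$ and base $T^{n-2}$ are tori, so on fundamental groups it yields a short exact sequence $0\to\Z^2\to\Z^n\to\Z^{n-2}\to0$ of free abelian groups. As $\Z^{n-2}$ is free the sequence splits, so $\Z^2$ is a direct summand of $\Z^n$ and $\Sp^1_i\cdot\Sp^1_{i+1}$ is a primitively embedded $2$-torus. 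Because the $\T^n$-action on $L_0$ is free, its restriction to this subtorus is free as well, whence
\[
	L_{x_i}\;\cong\;\T^n/(\Sp^1_i\cdot\Sp^1_{i+1})\;=\;\T^{n-2},
\]
the standard torus; equivalently, $\T^{n-1}/\Sp^1=\T^{n-2}$.

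The main obstacle I anticipate is exactly this identification of the circle (respectively the $2$-torus) as a subtorus rather than as an a priori arbitrary free smooth circle action: the whole difficulty of Theorem~\ref{T: Codimn 2 A foliation is diffeo to homogeneous fol} is to rule out exotic smooth structures on high-dimensional tori, and once the action is known to be linear the standardness of the quotient is immediate, with no further appeal to the Borel conjecture or to classification results for smooth tori.
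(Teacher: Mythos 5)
Your proposal is correct in substance, but it is not the paper's argument: it supplies a justification for a step that the paper's proof passes over in one line. The paper's proof is exactly the composite you describe at the outset: the bundle \eqref{E: least sing to most sing circle bundle} is principal (by the argument of Proposition~\ref{C: S1 bundles of A-fol are principal}), its total space $L_{p_i}$ is the standard $\T^{n-1}$ by Corollary~\ref{C: least singular leaf is standard}, and the paper then simply writes $L_{x_i}\cong \T^{n-1}/\Sp^1=\T^{n-2}$. That is, the paper treats the quotient of a standard torus by an a priori arbitrary free smooth circle action as if the circle were acting by translations of a subgroup. You correctly isolate this as the delicate point: for an abstract free action one only gets that the quotient is aspherical with fundamental group $\Z^{n-2}$, hence homotopy equivalent (and, by topological rigidity of tori, homeomorphic) to $T^{n-2}$; upgrading this to a diffeomorphism would need Hsiang--Shaneson/Wall-type smoothing theory, and no such tool exists at all when the most singular leaf is $4$-dimensional (the case $n=6$), where exotic smooth structures on $T^4$ cannot be excluded by any classification theorem. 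Your route---identifying the acting circle with the residual action of the neighbouring edge circle $\Sp^1_{i+1}$, so that $L_{x_i}$ becomes the quotient of $L_0\cong\T^n$ by a freely acting subtorus---reduces everything to a Lie-group quotient, which is standard by construction; this is the same mechanism that makes Corollary~\ref{C: least singular leaf is standard} legitimate (there the circle is a subgroup of the free transitive $\T^n$-action furnished by the corollary following Lemma~\ref{L: circle commute}), and propagating it to the vertex leaves is what makes the present proposition airtight in every dimension.

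Two caveats. First, the one assertion you state without proof---that the circle of \eqref{E: least sing to most sing circle bundle} is precisely the residual $\mu_{i+1}$-action on $L_0/\Sp^1_i$---appears nowhere in the paper and does require an argument. It can be obtained either from the local structure of the foliation at the vertex (trivial holonomy plus infinitesimal foliation $(\Sp^3,\T^2)$, in whose product model the projections $L_0\to L_{p_i}$, $L_0\to L_{p_{i+1}}$ and $L_{p_i}\to L_{x_i}$ are visibly compatible, with the fibers of the last being the images of the fibers of the second), or by the device used in the proof of Lemma~\ref{L: circle commute}: transport the bundle $L_{p_i}\to L_{x_i}$ through the foliated homeomorphism to the homogeneous model, where the vertex orbit has isotropy $G(a_i)\times G(a_{i+1})$ and the corresponding bundle is the residual $G(a_{i+1})$-action. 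So this gap is fillable by the paper's own methods, but a complete write-up must include it. Second, your primitivity argument via the splitting of $0\to\Z^2\to\Z^n\to\Z^{n-2}\to 0$ is harmless but not needed: the product of two circle subgroups of $\T^n$ is automatically a closed connected subgroup, hence a subtorus; its dimension must be $2$ because the quotient $L_{x_i}$ has dimension $n-2$; and the quotient of $\T^n$ by any subtorus, primitive or not, is a compact connected abelian Lie group, i.e.\ a standard torus.
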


\begin{proof}
Since \eqref{E: least sing to most sing circle bundle} is a  principal $\Sp^1$-bundle we conclude that $L_{x_i}$ is diffeomorphic to $L_{p_i} / \Sp^1$. From Corollary~\ref{C: least singular leaf is standard}, we have that the least singular leaf $L_{p_i}$ is diffeomorphic to $\T^{n-1}$. Thus the most singular leaf is diffeomorphic to  $\T^{n-1} /\Sp^1 = \T^{n-2}$. 
\end{proof}

\subsection{Smooth cross-section}

We show that for an $A$-foliation $(M,\fol)$ of codimension two on a compact, simply-connected, Riemannian manifold $M$, the quotient map $M\to M^\ast$ is smooth.

\begin{lem}\label{L: Least singular leaf has smooth  infinitesimal foliation projection}
Consider an $A$-foliation $(M,\fol)$ of codimension two on a compact, simply-connected manifold. Let $p\in M$ be such that $L_p$ is a least singular leaf (i.e. $L_p$ has codimension $3$ in $M$). Then the following hold  for the infinitesimal foliation $(\Sp_p^\perp,\fol^p)$ at $p$.
\begin{enumerate}[(i)]
\item The quotient space $\Sp_p^\perp/\fol^p$ is homeomorphic to the closed interval $[0,\pi]$, and thus it admits a unique smooth structure.\label{L: Least singular leaf has smooth  infinitesimal foliation projection I}
\item The quotient map $\Sp_p^\perp \to  \Sp_p^\perp/\fol^p$ is smooth.\label{L: Least singular leaf has smooth  infinitesimal foliation projection II} 
\end{enumerate}
\end{lem}

\begin{proof}
We note that $(\Sp^\perp_p,\fol^p)$ is an $A$-foliation of codimension $1$, with principal leaf homeomorphic to $\Sp^1$. It follows from \cite[Theorem~D]{Galaz-Garcia2015} that $(\Sp^\perp_p,\fol^p)$ is the homogeneous foliation $(\Sp^2,\Sp^1)$. Furthermore, from \cite{Galaz-Garcia2018} and \cite{Mostert57} it follows that,  any smooth action of $\Sp^1$ on $\Sp^2$ is equivalent (i.e. there exists an equivariant diffeomorphism) to the linear $\Sp^1$ action on $\Sp^2$. Thus the quotient map is smooth. 
\end{proof}

\begin{lem}\label{L: Most singular leaf has smooth  infinitesimal foliation projection}
Consider an $A$-foliation $(M,\fol)$ of codimension two on a compact, simply-connected manifold. Let $p\in M$ be such that $L_p$ is a most singular leaf (i.e. $L_p$ has codimension $4$ in $M$). Then the following hold  for the infinitesimal foliation $(\Sp_p^\perp,\fol^p)$ at $p$.
\begin{enumerate}[(i)]
\item The quotient space $\Sp_p^\perp/\fol^p$ is homeomorphic to the closed interval \linebreak$[0,\pi/2]$, and thus it admits a unique smooth structure.\label{L: Most singular leaf has smooth  infinitesimal foliation projection I}
\item The quotient map $\Sp_p^\perp \to  \Sp_p^\perp/\fol^p$ is smooth.\label{L: Most singular leaf has smooth  infinitesimal foliation projection II} 
\end{enumerate}
\end{lem}

\begin{proof}
We note that $(\Sp^\perp_p,\fol^p)$ is an $A$-foliation of codimension $1$, with principal leaf homeomorphic to $\T^2$. It follows from  \cite[Theorem~D]{Galaz-Garcia2015} that $(\Sp^\perp_p,\fol^p)$ is the homogeneous foliation $(\Sp^3,\T^2)$, given by the standard linear action. 

We consider $\Sp^3$ as the unit sphere in $\C^2$ and we  use  the so-called \emph{Hopf coordinates} for $\Sp^3$,  given by $(\theta_1,\theta_2,\eta)\mapsto (\sin\eta e^{i\theta_1},\sin\eta e^{i\theta_2},\cos\eta)$
with $\theta_1\in [0,2\pi]$,  $\theta_2\in [0,2\pi]$, and $\eta\in [0,\pi/2]$. We parametrize the $2$-torus $\T^2 =\Sp^1\times \Sp^1$ by the angles $(\alpha,\beta)$. With these coordinates, the action of $\T^2$ on $\Sp^3$ is given by:
\[
	(\alpha,\beta)(\theta_1,\theta_2,\eta) = (\theta_1+\alpha,\theta_2+\beta,\eta).
\]
Thus the quotient map $\Sp^3 \to  \Sp^3/\T^2$ is given by $(\theta_1,\theta_1,\eta)\mapsto \eta$. This proves both claims.
\end{proof}

\begin{prop}\label{R: smooth cross-sections for codim 2 A fol}
Let  $(M,\fol)$ be an $A$-foliation of codimension two on a compact, simply-connected manifold. Then  the leaf space $M^\ast$ admits a unique smooth structure. Furthermore there is a smooth cross-section $\overline{\sigma}\colon M^\ast \to M$ with respect to this smooth structure.
\end{prop}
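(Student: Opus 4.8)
The first assertion is immediate. By \cite[Theorem~E]{Galaz-Garcia2015} the leaf space $M^\ast$ is homeomorphic to the $2$-disk $\D^2$, and every topological $2$-manifold with boundary carries a smooth structure that is unique up to diffeomorphism; we equip $M^\ast$ with this smooth structure. The substance of the proposition is therefore the construction of a cross-section that is smooth with respect to it. The plan is to promote the continuous cross-section produced in Theorem~\ref{T: cross-section for codim 2 A-foliation} to a smooth one by carrying out the same splitting of $M^\ast$ (Figure~\ref{F: splitting of leaf space}) in the smooth category.

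First I would check that the leaf projection $\pi\colon M\to M^\ast$ is itself smooth. On the principal stratum this is clear, since there $\pi$ restricts to a Riemannian submersion onto the manifold part of the orbifold $\Sigma_{\reg}^\ast$. Near a singular leaf $L_p$ the foliation on a tubular neighborhood is, because there is no holonomy, modeled on the infinitesimal foliation; Lemmas~\ref{L: Least singular leaf has smooth  infinitesimal foliation projection} and~\ref{L: Most singular leaf has smooth  infinitesimal foliation projection} identify this infinitesimal foliation with the homogeneous model $(\Sp^2,\Sp^1)$ or $(\Sp^3,\T^2)$ and show that the corresponding quotient map $\Sp_p^\perp\to\Sp_p^\perp/\fol^p$ is smooth. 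Radially extending these normal-sphere quotients along geodesics shows that $\pi$ is smooth up to and including the boundary arcs and vertices.

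With $\pi$ smooth, I would build the section piecewise. Over the central disk $Y_0^\ast$, all of whose leaves are principal, the bundle $\T^n\to Y_0\to Y_0^\ast$ is a smooth fiber bundle over a contractible base, hence smoothly trivial, so it admits a smooth section. To extend across each boundary piece I would rerun the obstruction arguments of Lemmas~\ref{L: Xast=Square admits a cross-section} and~\ref{L: Xast= rectangle with Tn-2 leaf admits a cross-section} in the smooth category: the homogeneous infinitesimal models admit smooth cross-sections (for instance a minimizing normal geodesic from the singular fiber realizes the interval $[0,\pi]$, respectively $[0,\pi/2]$, smoothly), while the vanishing cohomology group $H^2(Z\setminus\widetilde{C},\widetilde{A}\setminus\widetilde{C};\,\Z)=0$ that guaranteed the extension is insensitive to the smooth-versus-continuous distinction. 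Extending successively to $Y_1,Y_2,\dots$ as in the proof of Theorem~\ref{T: cross-section for codim 2 A-foliation} then produces a global smooth section $\overline{\sigma}\colon M^\ast\to M$.

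The main obstacle is ensuring smoothness across the singular strata, that is, along the boundary arcs and especially at the vertices, where the local model jumps from $(\Sp^2,\Sp^1)$ to $(\Sp^3,\T^2)$. This is precisely where the two infinitesimal-foliation lemmas are essential: they guarantee both that the tubular-neighborhood collapse onto the leaf space is smooth and that the normal sections used to extend the principal-part section match up smoothly with the singular fibers. Away from these strata the construction is the standard smoothing of a section of a smooth fiber bundle over a disk, which presents no difficulty.
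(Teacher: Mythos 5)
Your first two steps coincide exactly with the paper's proof: the unique smooth structure on $M^\ast \cong \D^2$, and the smoothness of the projection $\pi\colon M\to M^\ast$, deduced from Lemma~\ref{L: Least singular leaf has smooth  infinitesimal foliation projection} and Lemma~\ref{L: Most singular leaf has smooth  infinitesimal foliation projection} together with triviality of holonomy (so that a neighborhood of $p^\ast$ is a cone over $\Sp_p^\perp/\fol^p$). Where you diverge is the construction of the smooth section, and this is where your argument has a genuine gap. The paper does not re-run the piecewise construction of Theorem~\ref{T: cross-section for codim 2 A-foliation} in the smooth category. Instead it takes the continuous section $\sigma$ already in hand, invokes Hirsch's density theorem to produce a smooth map $h\colon M^\ast\to M$ that is merely $C^0$-close to $\sigma$ (and need not be a section of anything), and then corrects it to a genuine section by setting $\overline{\sigma}=h\circ(\pi\circ h)^{-1}$, which is smooth because $\pi$ and $h$ are. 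This single global trick uses only the smoothness of $\pi$ and requires no smooth gluing across strata.

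Your replacement step --- ``rerun the obstruction arguments \dots in the smooth category'' --- does not work as stated. Obstruction theory is a continuous-category tool: the vanishing of $H^2(Z\setminus\widetilde{C},\widetilde{A}\setminus\widetilde{C};\,\Z)$ yields a \emph{continuous} extension of the section, not a smooth one, so the assertion that this step is ``insensitive to the smooth-versus-continuous distinction'' is precisely what needs proof. Away from the singular strata one could repair this by the standard fact that a continuous section of a smooth fiber bundle can be approximated by a smooth one rel a closed set where it is already smooth; but at the boundary arcs and vertices --- exactly where you yourself locate the difficulty --- $\pi$ is not a locally trivial bundle, the section is forced to pass through the fixed-point set of the local circle action, and no off-the-shelf smoothing theorem applies. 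It is true that the local models $(\Sp^2,\Sp^1)$ and $(\Sp^3,\T^2)$ admit smooth local sections, but the missing content is how to make such a local smooth section agree smoothly with the section already constructed over the principal part; you assert this matching rather than prove it. The paper's approximation-plus-correction device is precisely what dispenses with this matching problem, and it rests on the one nontrivial fact your sketch does establish correctly, namely that $\pi$ is smooth.
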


\begin{proof}
Recall that the leaf space $M^*$ of an $A$-foliation $(M,\fol)$ of codimension two on a closed, simply-connected manifold is homeomorphic to a  $2$-disk. Thus $M^*$ carries a unique smooth structure proving the first claim of the proposition. In this case in we get a smooth cross-section $\overline{\sigma}\colon M^\ast\to M$ as follows. Let $\sigma\colon M^\ast\to M$  be a  cross-section obtained from Theorem~\ref{C: Existence cross-sections codimension 2 A foliations}.  By Lemma~\ref{L: Least singular leaf has smooth  infinitesimal foliation projection} and Lemma~\ref{L: Most singular leaf has smooth  infinitesimal foliation projection}, for each infinitesimal foliation, the quotient map $\Sp^\perp_p\to \Sp^\perp_p/\fol^p$ is smooth. Since for any point $p\in M$, the leaf $L_p$ has trivial holonomy group, a local neighborhood of $p^\ast$ is given by a cone over $\Sp^\perp_p/\fol^p$. This implies that the quotient map $\pi\colon M\to M^\ast$ is smooth. 

It follows from \cite[Theorem~3.3]{Hirsch}  that the space of smooth functions \linebreak$C^\infty (M^\ast, M)$ is dense in the space of continuous functions $C^0 (M^\ast, M)$ with respect to the strong topology. Therefore given  a cross-section $\sigma\colon M^\ast \to M$, there exists a smooth map $h: M^\ast \to M$ close to $\sigma$ in $C^0(M^\ast, M)$. Since the quotient map $\pi\colon M \to M^\ast$ is smooth, then the map $\overline{\sigma}\colon M^\ast \to M$, defined as $\overline{\sigma} = h\circ (\pi \circ h)^{-1}$ is smooth. By construction the map $\overline{\sigma}$ is a cross-section for the map $\pi\colon M \to M^\ast$.
\end{proof}

We end the present work with the proof of the following lemma, which yields a proof of Theorem~\ref{T: Codimn 2 A foliation is diffeo to homogeneous fol}.

\begin{lem}\label{R: A-foliated homeo can be smoothed if sections smooth and no exotic structure}
Let $(M_1,\fol_1)$ and $(M_2,\fol)$ be compact, simply-connected manifolds, with $A$-foliations with standard diffeomorphism type, and isometric leaf spaces. If the leaf spaces $M_1^\ast$ and $M_2^\ast$ are homeomorphic to smooth manifolds, there exists a smooth weight isomorphism $f^\ast M_1^\ast \to M^\ast_2$, and the cross-sections $\sigma_i\colon M_i^\ast\to M_i$ are smooth with respect to these smooth structure, then the foliated homeomorphism of Theorem~\ref{T: weights classify the foliation} is a foliated diffeomorphism.
\end{lem}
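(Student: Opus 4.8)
The plan is to prove that the foliated homeomorphism $\phi\colon(M_1,\fol_1)\to(M_2,\fol_2)$ constructed in the proof of Theorem~\ref{T: weights classify the foliation} is smooth together with its inverse. The hypotheses are symmetric in the two sides: the inverse weight isomorphism $(f^\ast)^{-1}\colon M_2^\ast\to M_1^\ast$ is again a smooth isomorphism of leaf spaces and the cross-sections $\sigma_i$ are smooth, so once $\phi$ is shown to be smooth the identical argument with the roles of $M_1$ and $M_2$ interchanged yields smoothness of $\phi^{-1}$, and hence $\phi$ is a foliated diffeomorphism. Since smoothness is a local property, I would verify it in a neighborhood of an arbitrary point $x\in M_1$.

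First I would record the available smooth infrastructure. By Proposition~\ref{R: smooth cross-sections for codim 2 A fol} the leaf spaces carry their unique smooth structures, the quotient maps $\pi_i\colon M_i\to M_i^\ast$ are smooth, and the $\sigma_i$ may be taken smooth; consequently the base-point assignments $x\mapsto\sigma_1(\pi_1(x))$ and $x\mapsto\sigma_2\bigl(f^\ast(\pi_1(x))\bigr)=\phi\bigl(\sigma_1(\pi_1(x))\bigr)$ are smooth, being composites of smooth maps. Every leaf of a codimension-two $A$-foliation on a simply-connected manifold has trivial holonomy, so a tubular neighborhood of a leaf is modeled on its infinitesimal foliation, and a neighborhood of $x^\ast$ in $M_i^\ast$ is a smooth cone over the infinitesimal quotient $\Sp^\perp_\bullet/\fol^\bullet$. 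By Lemmas~\ref{L: Least singular leaf has smooth  infinitesimal foliation projection} and~\ref{L: Most singular leaf has smooth  infinitesimal foliation projection} these quotients are the standard homogeneous models $(\Sp^2,\Sp^1)$ and $(\Sp^3,\T^2)$, whose quotient maps are smooth. Combined with the smooth cross-section and the hypothesis that all leaves have \emph{standard diffeomorphism type}, this produces on a neighborhood $U$ of $x$ a smooth trivialization
\[
	U\;\cong\;V^\ast\times\bigl(\R^k/\Gamma\bigr),
\]
in which the foliation is the product foliation by the slices $\{z^\ast\}\times(\R^k/\Gamma)$, the base point $\sigma_1(z^\ast)$ is the origin of $\R^k/\Gamma$, and each leaf is the standard flat $\R^k/\Gamma$ — the standard-structure hypothesis being exactly what guarantees that these leaf-wise identifications are diffeomorphisms rather than mere homeomorphisms.

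In these coordinates, and in the matching ones around $\phi(x)$ determined by $\sigma_2$ and $f^\ast$, the recipe defining $\phi$ reads
\[
	(z^\ast,[v])\;\longmapsto\;\bigl(f^\ast(z^\ast),[v]\bigr).
\]
On the base factor this is $f^\ast$, which is smooth by hypothesis; on each fiber it is the self-map of $\R^k/\Gamma$ covered by $\mathrm{id}_{\R^k}$, which is a diffeomorphism. Crucially, although the Dirichlet representative $v_x$ chosen inside the fundamental domain jumps discontinuously as $x$ crosses the cut locus of its leaf, the induced leaf map $\phi|_{L_x}\colon L_x\to L_{\phi(x)}$ is precisely the map of flat manifolds covered by the identity of $\R^k$, hence smooth across the cut locus; the discontinuity of $v_x$ is merely an artefact of the choice of fundamental domain and does not affect $\phi$. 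Therefore $\phi$ is smooth on $U$, and by the symmetric argument $\phi^{-1}$ is smooth as well, which completes the proof.

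The step I expect to be the main obstacle is producing the smooth trivialization $U\cong V^\ast\times(\R^k/\Gamma)$ across the singular strata, where the leaf dimension $k$ drops and the product structure degenerates into the cone on the infinitesimal foliation. Here one must verify that the smooth leaf-wise identifications with the standard $\R^k/\Gamma$ assemble smoothly with the smooth cone coordinates on $M^\ast$, rather than only continuously. This is exactly the point at which the standard homogeneous models and the smoothness of the infinitesimal quotient maps from Lemmas~\ref{L: Least singular leaf has smooth  infinitesimal foliation projection} and~\ref{L: Most singular leaf has smooth  infinitesimal foliation projection}, together with the smoothness of $\pi_i$ and $\sigma_i$, are indispensable.
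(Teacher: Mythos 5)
Your overall strategy is the same as the paper's: the foliated homeomorphism of Theorem~\ref{T: weights classify the foliation} is assembled from the smooth data $f^\ast$, $\sigma_i$, $\pi_i$ together with a leaf-wise identification that is smooth because the leaves carry the standard smooth structure, and the inverse is handled by symmetry. Your remark that the jump of the Dirichlet representative $v_x$ across the cut locus is harmless, because the induced leaf map is covered by the identity of $\R^k$, is actually more precise than anything the paper says at that point.

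There is, however, a genuine gap, and it sits exactly where the lemma has content. Your key device is a smooth trivialization $U\cong V^\ast\times(\R^k/\Gamma)$ on a neighborhood $U$ of an \emph{arbitrary} point $x\in M_1$, with the foliation equal to the product foliation by the slices $\{z^\ast\}\times(\R^k/\Gamma)$. If $x$ lies on a singular leaf, no such trivialization can exist: every neighborhood of $x$ contains principal leaves of dimension $n$ together with the singular leaf of strictly smaller dimension, so the restricted foliation is never a product foliation with fibers of one fixed dimension $k$, and your coordinate formula $(z^\ast,[v])\mapsto(f^\ast(z^\ast),[v])$ is not even defined there. Smoothness of $\phi$ on the open principal stratum, where your trivialization does exist, is the easy part; the whole difficulty is smoothness transversally to the singular strata, i.e., at the boundary of $M^\ast$, where the fiber degenerates from $T^n$ to $T^{n-1}$ or $T^{n-2}$. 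You flag this yourself as ``the main obstacle'' but only assert that the cone structure and the smoothness of the infinitesimal quotient maps should resolve it, without carrying out the argument, so the proof is incomplete at its decisive step. The paper does not rest on such a trivialization: it argues directly that the base-point assignment is the composition of $\pi_1$, $f^\ast$ and $\sigma_2$, which are smooth by hypothesis and by Proposition~\ref{R: smooth cross-sections for codim 2 A fol}, and that the leaf-wise Dirichlet-domain parametrization depends smoothly on the point because each leaf is diffeomorphic (not merely homeomorphic) to $\R^k/\Gamma$ --- the standard-diffeomorphism-type hypothesis. Admittedly the paper's own proof is terse at the same spot, but it avoids asserting a local product structure that fails near the singular leaves.
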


\begin{proof}
The smoothness  follows from the following two observations. First we note that that the foliated  homeomorphism of Theorem~\ref{T: weights classify the foliation} is  defined by the  composition of the map $f^\ast$ and the cross-sections $\sigma_i$, which are smooth by hypothesis. Second, the  fact that the leaves are diffeomorphic to $\R^n/\Gamma$ is used to show that the dependency of $x\in L$ with respect to this center is smooth, once we have chosen our center of the Dirichlet domain $y =\sigma_i(x^\ast)$.
\end{proof}

\begin{proof}[Proof of Theorem~\ref{T: Codimn 2 A foliation is diffeo to homogeneous fol}]
If $n=3$ then the principal leaf is $\Sp^1$. By \cite[Theorem~3.11]{Galaz-Garcia2015} the conclusions of Theorem~\ref{T: Codimn 2 A foliation is diffeo to homogeneous fol} follow.
Now we consider the case $n\geqslant 4$. In this case  the leaf space of an $A$-foliation $(M,\fol_1)$ of codimension two on a compact, simply-connected, Riemannian manifold $M$, is a $2$-disk. Then $M^\ast$ admits a smooth structure in a unique way. Furthermore, by Theorem~\ref{t: Weights realized}, there exists a closed Riemannian manifold $N$ with an homogeneous foliation $\fol_2$. This foliation has  the property  that $N^\ast$ is weighted diffeomorphic to $M^\ast$. Last we remark that, by Proposition~\ref{R: smooth cross-sections for codim 2 A fol}, the hypotheses of Lemma~\ref{R: A-foliated homeo can be smoothed if sections smooth and no exotic structure} are satisfied, and thus the result follows.
\end{proof}

\section{Obstruction Theory for cross-section of singular Riemannian foliations}\label{S: obstruction to cross-sections}

In this section we present  Obstruction Theory and apply it to the setting of general singular Riemannian foliations, stating general sufficient conditions for the existence of a cross-section $\sigma\colon M^\ast\to M$ for the quotient map $\pi\colon M\to M^\ast$ of a singular Riemannian foliation $(M,\fol)$ on a simply-connected manifold. We begin presenting the results for general singular Riemannian foliations obtained from the application of Obstruction Theory. In the last section we present the general Obstruction Theory for fibrations between CW-complexes.

\subsection{Obstruction Theory applied to singular Riemannian foliations}\label{S: obstruction theory applied to SRF}

In this sections we focus ourselves to a a given singular Riemannian foliation $(M,\fol)$ on a simply-connected manifold, and  apply  Obstruction Theory presented in Section~\ref{S: general Obstruction Theory} to the quotient map $\pi\colon M\to M^\ast$ to give sufficient conditions for the existence of a cross-section $\sigma\colon M^\ast\to M$ for $\pi$. We begin by stating some topological definitions we need in our results.

Given a connected topological space $W$ and a connected subspace $A\subset W$, fixing a point $w_0 \in A$, recall that there is an action of $\pi_1(A,w_0)$ on $\pi_k(W,A,w_0)$ \cite[p.~345]{Hatcher}. Since $A$ and $W$ are connected this actions does not depend on the base point. We say that a pair $(W,A)$ of connected spaces is an \emph{$n$-simple pair} if this action is trivial for all $k\leqslant n$, and the pair is a \emph{simple pair} if it is $n$-simple for all $n\geqslant 1$. Also observe that there is an action of $\pi_1(W,w_0)$ on the homotopy groups $\pi_k(W,w_0)$. We say that $W$ is  \emph{$n$-simple} if this action is trivial for all $k\leqslant n$. We say $W$ is \emph{simple} if it is $n$-simple for all $n\geqslant 1$, see \cite[Section~4.1]{Hatcher}. 

Now we recall the basic topological constructions of the mapping path fibration and the mapping cylinder of a continuous map $f\colon X\to Y$. Denote by $Y^I$ the space of all continuous paths $\gamma\colon I\to Y$ equipped with the compact-open topology. There is a fibration $q\colon Y^I\to Y$, defined as $q(\gamma) = \gamma(0)$. Then we define the \emph{mapping path fibration of $f$}, which we  denote by $\pi_f\colon E_{\pi_f}\to Y$, as follows: the total space is  $E_{\pi_f} = \{(x,\gamma)\in X\times Y^I\mid f(x)=\gamma(1)\}$, and the projection is defined as $\pi_{f}(x,\gamma) = \gamma(0)$. The space $E_{\pi_f}$ is homotopy equivalent  to $X$, and the map $\pi_f$ is a fibration, with fiber $F_f$ called the \emph{homotopy fiber of $f$}. The \emph{mapping cylinder of $f$} is the space $M_f$ defined as follows: we consider $(X\times I)\sqcup Y$ and identify $(x,1)$ with $f(x)$. There is a natural inclusion map $i\colon X\to M_f$ given by $i(x) = [x,0]$. Then $M_f$ is homotopy  equivalent to $Y$ and the map $i$ is a cofibration.

Recall that given  a closed singular Riemannian foliation $(M,\fol)$, when we consider a closed subset $X\subset M_{\prin}$, consisting of principal leafs the projection map $M\to M^\ast$ restricted to $X$ yields a fiber bundle:
\begin{linenomath}
\[
	L\to X \to X^\ast.
\]
\end{linenomath}
We apply Theorem~\ref{T: Obstructions to extension} to get a family of obstructions, which we will call \emph{first obstructions}.

\begin{thm}\label{T: first obstructions}
Let $(M,\fol)$ be a closed singular Riemannian foliation with $M$ simply-con\-nected, quotient map $\pi\colon M\to M^\ast$, and principal leaf $L$, which is simple and connected. Consider $X\subset M_{\prin}$  such that $X^\ast$ is a CW-complex which is simply-connected. Denote by $M_\pi$ the mapping cylinder of $\pi\colon X\to X^\ast$ and assume that $(M_\pi,  X)$ is simple. 
Then there is a family of obstructions $\omega^1_k \in H^{k+1}(X^\ast;\pi_k(L))$ such that a cross-section $\sigma\colon X^\ast \to X$ exists if $\omega^1_k = 0$ for all $k$.
\end{thm}

\begin{proof}
By applying Theorem~\ref{T: Obstructions to extension} with $W = Y= X^\ast$, and $A=\emptyset$ we get the result.
\end{proof}

Even  if a section exists on a closed set of the leaf space contained in  the principal part of the foliation, it may happen that it cannot be extended to the whole leaf space (as an example see \cite{Fintushel76} or \cite{Fintushel77}). To solve this new extension problem we need another family of obstructions which we call \emph{second obstructions}.

\begin{thm}\label{T: second obstruction}
Let $(M,\fol)$ be a closed singular Riemannian foliation with $M$ simply con\-nected, and consider the quotient map $\pi\colon M\to M^\ast$. Suppose that the homotopy fiber $F_\pi$ is simple, take $A^\ast\subset M_{\prin}^\ast$ to be a closed subset, such that $(M^\ast,A^\ast)$ is a CW-pair. We also assume we have  defined a cross-section $\sigma\colon A^\ast\to M_{\prin}$. 
Then there is a family of obstructions $\omega^2_k \in H^{k+1}(M^\ast,A^	\ast;\pi_k(F_\pi))$ such that a cross-section $\tilde{\sigma}\colon M^\ast \to M$ extending $\sigma$ exists if $\omega^2_k = 0$ for all $k$.
\end{thm}

\begin{proof}
Since $M$ is simply-connected, then $M^\ast$ is also simply-connected. We apply Theorem~\ref{T: Obstructions to extension} to obtain the desired result.
\end{proof}

In particular, when we cannot distinguish $M^\ast$ from any closed subset of  $M^\ast_{\prin}$ from a homotopical view-point, we get the proof of Corollary~\ref{C: existence of cross sections intro} from Theorem~\ref{T: second obstruction}.

\begin{cor}\label{C: M_prin/F homotopy equiv to M/F and section on M_prin imply section on M/F}
Let $(M,\fol)$ be a closed singular Riemannian foliation on a simply-connected manifold. Suppose that there is a section $\tilde{\sigma}\colon M_{\prin}^\ast\to M_{\prin}$, and the that hypothesis of Theorem~\ref{T: second obstruction} are satisfied. If $M_{\prin}^\ast$ has the same homotopy type as $M^\ast$, then the cross-section $\tilde{\sigma}$ can be extended to a section $\sigma$.
\end{cor}

\begin{remark}
Since the holonomy is only defined for closed leaves (see Section~\ref{SS: holonomy and local projections}), we ask that the foliation is closed in order to ensure the existence of a principal stratum in Theorem~\ref{T: first obstructions}, Theorem~\ref{T: second obstruction}, and Corollary~\ref{C: M_prin/F homotopy equiv to M/F and section on M_prin imply section on M/F}.
\end{remark}

\subsection{Moore-Postnikov towers and Obstruction Theory}\label{S: general Obstruction Theory}
In this section for the sake of completeness we present general statements from Obstruction Theory presented in  \cite[Chapter 4, Obstruction Theory]{Hatcher} used in Section~\ref{S: obstruction theory applied to SRF}. We  being by recalling the definition of a Moore-Postnikov tower.

Recall that a fibration $F\to E\overset{p}{\to} B$ is called a \emph{principal fibration} if there exists a fibration $F'\to E'\overset{p'}{\to} B'$ and a commutative diagram
\[
\begin{tikzcd}
F \ar[r]\ar[d] & E \ar[r]\ar[d] & B\ar[d] &\\
\Omega B' \ar[r] & F' \ar[r] &  E' \ar[r]& B', 
\end{tikzcd}
\]
where all the vertical maps are weak homotopy equivalences.

Given a continuous map $f\colon X\to Y$ between connected spaces a \emph{Moore-Postnikov tower of $f$} is a collection of spaces 
\[
	\cdots\to Z_{n+1}\overset{\alpha_n}{\to}Z_n\to\cdots\to Z_1,
\]
and continuous maps $\alpha_n\colon Z_{n+1}\to Z_n$, $\lambda_n\colon X\to Z_n$ and $\mu_n\colon Z_n\to Y $ such that  
\begin{enumerate}[(i)]
\item $\alpha_n\circ\lambda_{n+1} = \lambda_n$;
\item $\mu_n\circ \alpha_n = \mu_{n+1}$;
\item for $n$ fixed the map $\lambda_n$ induces an isomorphism between $pi_i(X)$ and $\pi_i(Z_n)$ for all  $i<n$, and a surjection for $i = n$;
\item for $n$ fixed the map $\mu_n$ induces an isomorphism between $pi_i(Z_n)$ and $\pi_i(Y)$ for all  $i>n$, and an monomorphism for $i = n$;
\item each map $\alpha_n$ is a fibration with fiber an Eilenberg-MacLane space\linebreak $K(\pi_n(F_f),n)$, where $F_f$ is the homotopy fiber of $f$.
\end{enumerate}
This points are summarized in the following commutative diagram:
\[
\begin{tikzcd}
 & \vdots \arrow{d}{\alpha_3} &\\
 & Z_3 \arrow{d}\arrow{ddr}{\mu_3}& \\
  & Z_2 \arrow{d}\arrow{dr}& \\
X \arrow{r}\arrow{ru}\arrow{ruu}{\lambda_3}  & Z_1 \arrow{r} & Y.
\end{tikzcd}
\]
The idea behind a Moore-Postnikov tower is that we have a series of fibrations between spaces $Z_n$, which at the start they approximate the homotopy type of $Y$ and gradually, as the index $n$ goes to infinity, they approximate the homotopy type of $X$. When the fibrations $\alpha_n\colon Z_{n+1}\to Z_n$ are principal we have say we have a \emph{principal Moore-Postnikov tower for $f$}. In general any map between CW-spaces admits a Moore-Postnikov tower, but the following theorem gives sufficient and necessary conditions for the existence of a principal  Moore-Postnikov tower for a map between CW-spaces:

\begin{thm}[Theorem~4.71 in \cite{Hatcher}, Existence of Moore-Postnikov tower of principal fibrations]\label{T: existence of Moore postnikov towers}
For a given map $f\colon X\to Y$ between connected CW-spaces, a Moore-Postnikov tower of principal fibrations exists if and only if $\pi_1(X)$ acts trivially on $\pi_n(M_f , X)$ for all $ n > 1$, where $M_f$ is the mapping cylinder of $f$.
\end{thm}

With this concepts we can state the following obstruction theorem for extending maps, and include its proof for the sake of completeness.

\begin{thm}[Obstruction Theory in \cite{Hatcher}]\label{T: obstruction for lift for a fibration}
Let $p\colon X\to Y$ be a fibration with fiber $F$, $(W,A)$ a CW-pair with $W$ simply connected. Assume the fibration has a Moore-Postnikov tower of principal fibrations and consider the relative lifting problem:
\begin{center}
\begin{tikzcd}
A \arrow{r}{f} \arrow[hookrightarrow,swap]{d}{i} & X \arrow{d}{p} \\
W \arrow[dashed]{ur}{\tilde{f}} \arrow{r}{g} & Y
\end{tikzcd}
\end{center}
There exists an obstruction $\omega_n\in H^{n+1}(W,A;\pi_n (F) )$, such that a lift\linebreak $\tilde{f}\colon W\to X$ extending $f\colon A\to X$ exists, if $\omega_n=0$ for all $n$.
\end{thm}
\begin{proof}
First we note that since we have a fibration $p\colon X\to Y$, we may take $Z_1$ to be the covering space of $Y$ corresponding to the subgroup $p_\ast (\pi_1 (X))$ of $\pi_1(Y)$. Since $W$ is simply connected we can lift $g$ to $W\to Z_1$, which agrees with $g\circ\lambda_1\colon A\to Z_1$. Since the Moore-Postnikov tower is by principal fibrations, for the inductive step we have a commutative diagram as follows:
\begin{equation}
\begin{tikzcd}
A \ar[r]\arrow[hookrightarrow]{d} & Z_n \ar[r] \ar[d] & PK \ar[d] &[-3.5em]\\
W \ar[r] & Z_{n-1} \ar[r] & K & = K(\pi_n(F), n+1).
\end{tikzcd}
\end{equation}
Here $PK\to K$ is the \emph{path fibration}, defined by fixing a point $b_0\in K$, and letting $PK$  be the space of all curves in $K$ starting at $b_0$, and the letting  $PK\to K$ be the map that sends each path to its end point. Since $Z_n$ is the pullback, the elements in $Z_n$ are pairs consisting of a point in $Z_{n-1}$ and a path from its image in $K$ to the base point in $K$. A lift $W\to Z_n$ therefore amounts to a null-homotopy of the composition $W\to Z_{n-1}\to K$. Since we have already defined such a lift on $A$, we have a null-homotopy of $A\to K$, and the desired null-homotopy of $W\to K$ must extend this null-homotopy on $A$. The map $W\to K$ together with the null-homotopy on $A$ gives a map $W\cup C(A)\to K$, where $C(A)$ is the cone of $A$. Since $K$ is an Eilenberg-MacLane space $K(\pi_n(F), n+1)$, the map $W\cup C(A)\to K$ determines the desired obstruction 
\[
\omega_n\in H^{n+1}(W\cup C(A);\pi_n(F) )= H^{n+1}(W,A;\pi_n(F) ).
\]
If $\omega_n=0$, by construction we have that there is a null-homotopy of $W\to K$ extending the given null-homotopy $A\to K$.

If we succeed in extending the lifts $A\to Z_n$ to lifts $W\to Z_n$ for all $n$, then we obtain a map $W\to \varprojlim Z_n$, to the inverse  limit $\varprojlim Z_n$, extending the given $A\to X\to \varprojlim Z_n$. Let $M$ be the mapping cylinder of  $X\to \varprojlim Z_n$. From the hypothesis that the restriction of  $W\to \varprojlim Z_n\subset M$ to $A$ factors through $X$, this gives a homotopy of  this restriction to the map $A\to X\subset M$. We extend this homotopy to a homotopy of $W\to M$ producing a map $(W,A)\to (M,X)$. Since the map $X\to \varprojlim Z_n$ is a weak homotopy equivalence, then $\pi_i(M,X)=0$ for all $i$, and from the so-called \emph{Compression Lemma} (see Lemma~4.6 in \cite{Hatcher}), we conclude that the map $(W,A)\to (M,X)$ is homotopic relative to $A$ to a map $W\to X$. Hence  the map $W\to X$ extends the given map $A\to X$.
\end{proof}

\begin{thm}(Obstruction to extension)\label{T: Obstructions to extension}
Let $(W,A)$ be a relative $CW$-complex, with $W$ simply connected, and assume we have continuous maps $p\colon X\to Y$, $f\colon A\to X$ and $g\colon W\to Y$ given. Furthermore, suppose that the homotopy fiber $F_p$ of the map $p$  is simple, and that $(M_{p}, X)$ is a simple pair. Then the following are true:
\begin{enumerate}[(i)]
\item\label{T:Family of Obstructions} There is a family of obstructions $\omega_k\in H^{k+1}(W,A;\pi_k(F_p))$ such that there exists a lift $\tilde{f}$ of $f$ such that $p\circ g =\tilde{f}$ and $\tilde{f}|_A = f$, if $\omega_k=0$ for all $k$.
\item\label{T: Unique Obstruction for Eilenberg-McLane space} If $F_p$ is an Eilenberg-McLane space $K(\pi,\ell)$, then there is a unique obstruction $\omega_{\ell}\in H^{\ell+1}(W,A;\pi)$, and the lift $\tilde{f}$ of $f$  exists if and only if $\omega_{\ell}=0$.
\end{enumerate}
\end{thm}

\begin{proof}
We sketch here the proof. For further details we invite the interested reader to see, for example, \cite[Chapter~4]{Hatcher} for a more detailed discussion. First we show that  the pair $(M_p, X)$ is simple only when the pair $(M_{\pi_p},E_{\pi_p})$ is simple. To prove this we start by recalling that  there is a homotopy equivalence $h$ between  $X$ and  the total space $E_{\pi_p}$ of the path space fibration for $p\colon X \to Y$ making the following diagram commute:
\begin{linenomath}
\begin{equation}\label{E: homotopy fiber}
\begin{tikzcd}[column sep=small]
X \arrow{rr}{h} \arrow[swap]{dr}{p} & & E_{\pi_p} \arrow{dl}{\pi_p} \\
 & Y & 
\end{tikzcd}
\end{equation}
\end{linenomath}
For the maps $p\colon X\to Y$ and $\pi_p\colon E_{\pi_p}\to Y$, we have cofibrations $j\colon X\to M_p$ and $i\colon E_{\pi_p}\to M_{\pi_p}$. Furthermore there is also an  homotopy equivalences between the mapping cylinder $M_p$ and  $M_{\pi_p}$ making  the following diagrams commute:
\begin{linenomath}
\begin{equation}\label{E: Mapping cylinders}
\begin{tikzcd}[column sep = small]
	& X\arrow[swap]{dl}{j}\arrow{dr}{p} &	\\
M_p \arrow{rr}{} &	& Y 
\end{tikzcd}
\qquad
\begin{tikzcd}[column sep = small]
	& E_{\pi_p}\arrow[swap]{dl}{\pi_p}\arrow{dr}{i} &	\\
Y &	& M_{\pi_p}\arrow{ll}{}
\end{tikzcd}
\end{equation}
\end{linenomath}

Combining diagram \eqref{E: homotopy fiber} with the diagrams \eqref{E: Mapping cylinders}, we obtain the following diagram which commutes up to homotopy:
\begin{linenomath}
\[
\begin{tikzcd}
X \arrow{r}{} \arrow[hookrightarrow,swap]{d}{j} & E_{\pi_p} \arrow[hookrightarrow]{d}{i} \\
M_p \arrow{r}{} & M_{\pi_p}
\end{tikzcd}
\]
\end{linenomath}
Observe that the arrows going down are cofibrations, and the horizontal ones are homotopy equivalences. Then from the previous commutative diagram and \cite[7.4.2]{Brown}, the homotopy groups $\pi_k(M_p, X)$ are (equivariantly under the action of $\pi_1(X)$) isomorphic to $\pi_k(M_{\pi_q}, E_{\pi_p}) $ (with the action of $\pi_1(E_{\pi_p})= \cong \pi_1(X)$). Thus $(M_p, X)$ is simple if and only if $(M_{\pi_p}, E_{\pi_p})$ is simple. Therefore, for the fibration $\pi_p\colon E_{\pi_p}\to Y$, there exists a Moore-Postnikov tower by principal fibrations, which yield the desired family of obstructions. Thus we may apply Theorem~\ref{T: obstruction for lift for a fibration} to the fibration $\pi_p\colon E_{\pi_p}\to Y$.

We also note that we may apply the last argument in the proof of Theorem~\ref{T: obstruction for lift for a fibration}, and use the fact that the restriction of $W\to \varprojlim Z_n\subset M$ to $A$ factors through $X$, to construct the lift $W\to X$ which extends $A\to X$. Here $M$ is the mapping cylinder of the map $X\to \varprojlim Z_n$.
\end{proof}

\begin{remark}
The reason why in Theorem~\ref{T: Obstructions to extension}~\eqref{T:Family of Obstructions} we have an ``if... then..." statement and on Theorem~\ref{T: Obstructions to extension}~\eqref{T: Unique Obstruction for Eilenberg-McLane space} we have an ``if and only if" statement lies in the fact that for the proofs of these theorems we use a Moore-Postnikov tower of principal fibrations $\cdots\to Z_2\to Z_1\to Y$ for $p$. In the case of Theorem~\ref{T: Obstructions to extension}~\eqref{T:Family of Obstructions} the lifts may be not unique, and in some examples this may yield non trivial $\omega_k$ even when an extension exists. An exception to this, is the case when $F_p$ is an Eilenberg-McLane space (see \cite[Section~4.3]{Hatcher}).
\end{remark}

\begin{remark}
The condition on $W$ being simply connected is used to ensure a unique lift from  $W$ to $Z_1$ in the Moore-Postnikov chain.
\end{remark}

%
%

\bibliographystyle{siam}
\bibliography{Tesis_Doc}

\end{document}